\newcommand{\xycenter}[1]{\begin{center}
                          \mbox{\xymatrix{#1}}
                          \end{center}
                         }
\newtheorem{theorem}{Theorem}[section]
\newtheorem{proposition}[theorem]{Proposition}
\newtheorem{lemma}[theorem]{Lemma}
\newtheorem{corollary}[theorem]{Corollary}
\numberwithin{equation}{section}
\theoremstyle{definition}
\newtheorem{definition}[theorem]{Definition}
\newtheorem{remark}[theorem]{Remark}
\newtheorem*{notation}{Notation}
\newtheorem*{familyCorollary}{Corollary~\ref{cFamilyGeneralizedHexapods}}
\newtheorem*{classificationTheorem}{Theorem~\ref{tClassification}}
\newcommand{\CC}{\mathbb{C}}
\newcommand{\FF}{\mathbb{F}}
\newcommand{\NN}{\mathbb{N}}
\newcommand{\PP}{\mathbb{P}}
\newcommand{\RR}{\mathbb{R}}
\newcommand{\ZZ}{\mathbb{Z}}
\newcommand{\GG}{\mathbb{G}}
\newcommand{\sO}{\mathcal{O}}
\newcommand{\sE}{\mathcal{E}}
\newcommand{\sD}{\mathcal{D}}
\newcommand{\sK}{\mathcal{K}}
\newcommand{\sOPP}{\sO_{\PP^3}}
\newcommand{\sOPii}{\sO_{\PP^1 \times \PP^1}}
\newcommand{\OPn}{\sO_{\PP^n}}
\newcommand{\from}{\leftarrow}
\newcommand{\eulerParam}{\eta}
\newcommand{\eulerProjection}{\pi}
\newcommand{\SO}{\mathrm{SO}}
\newcommand{\SE}{\mathrm{SE}}
\newcommand{\SEbar}{\overline{\SE_3}}
\newcommand{\SEinfty}{\SE_3^\infty}
\newcommand{\SObar}{\overline{\SO_3}}
\newcommand{\PPbondDual}{\widehat{\PP}^{16}}
\newcommand{\EulerInfty}{\PP^3_\infty}
\newcommand{\weightedPP}{\PP(1^4,2^7)}
\newcommand{\weightedEmbedding}{\alpha}
\newcommand{\sOPthree}{\sO_{\PP^3}}
\newcommand{\TPthree}{T_{\PP^3}}
\newcommand{\Spec}{\operatorname{Spec}}
\DeclareMathOperator{\id}{id}
\DeclareMathOperator{\gin}{gin}
\DeclareMathOperator{\initial}{in}
\DeclareMathOperator{\codim}{codim}
\DeclareMathOperator{\rad}{rad}
\DeclareMathOperator{\sat}{sat}
\DeclareMathOperator{\Hilb}{Hilb}
\begin{document}

\title{Hexapods with a small linear span}

\author[Bothmer]{Hans-Christian Graf von Bothmer}
\address{Hans-Christian Graf von Bothmer, Fachbereich Mathematik der Universit\"at Hamburg,
Bundesstra\ss e 55, 20146 Hamburg, Germany}
\email{hans.christian.v.bothmer@uni-hamburg.de}

\author[Gallet]{Matteo Gallet$^\ast$}
\address{Matteo Gallet, International School for Advanced Studies/Scuola Internazionale Superiore di Studi Avanzati (ISAS/SISSA),
Via Bonomea 265, 34136 Trieste, Italy}
\thanks{$^\ast$ Supported by the Austrian Science Fund (FWF): Erwin Schr\"odinger Fellowship J4253.}
\email{mgallet@sissa.it}

\author[Schicho]{Josef Schicho$^\circ$}
\address{Josef Schicho, Research Institute for Symbolic Computation (RISC), Johannes Kepler University Linz,
Altenbergerstra\ss e 69 \\ 
4040 Linz, Austria}
\thanks{$^\circ$ Supported by the Austrian Science Fund (FWF): W1214-N15, project DK9.}
\email{jschicho@risc.jku.at}

\begin{abstract}
The understanding of mobile hexapods, i.e., parallel manipulators with six legs, is one of the driving questions in theoretical kinematics. We aim at contributing to this understanding by employing techniques from algebraic geometry. The set of configurations of a mobile hexapod with one degree of freedom has the structure of a projective curve, which hence has a degree and an embedding dimension. Our main result is a classification of configuration curves of hexapods that satisfy some restrictions on their embedding dimension. 
\end{abstract}

\maketitle

\section{Introduction} \label{sIntro}

In this paper we work on the classification of mechanical manipulators called $n$-pods. 
In particular we focus on the case $n=6$, i.e., \emph{hexapods} or \emph{Stewart-Gough platforms}. 
Geometrically (see~\cite{NawratilIntro}), these are described by $n$ platform anchor points $P_i \in \RR^3$ and $n$ base anchor points $Q_i \in \RR^3$, 
where each pair of base/platform points is connected by a \emph{leg} so that for all possible configurations of the mechanism the distance between~$P_i$ and~$Q_i$ is preserved (see Figure~\ref{fig:hexapod}).

\begin{figure}[ht]
	\begin{overpic}[width=.4\textwidth]{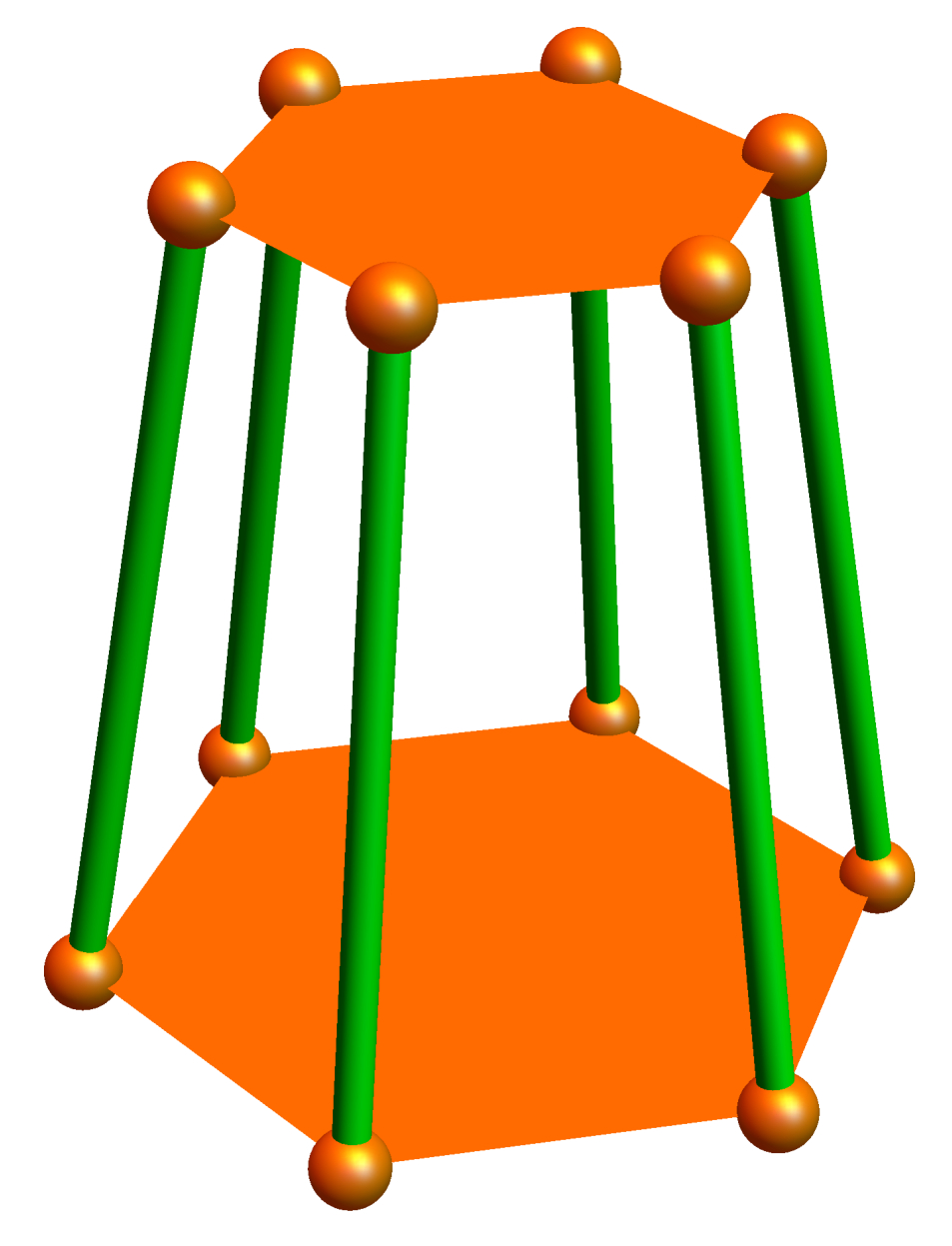}
		\put(-4,20){$Q_1$}
		\put(30,0){$Q_2$}
		\put(67,10){$Q_3$}
		\put(75,28){$Q_4$}
		\put(45,35){$Q_5$}
		\put(19,33){$Q_6$}
		\put(5,85){$P_1$}
		\put(30,80){$P_2$}
		\put(50,82){$P_3$}
		\put(66,90){$P_4$}
		\put(45,100){$P_5$}
		\put(19,98){$P_6$}
	\end{overpic}
	\caption{An example of a hexapod.}
\label{fig:hexapod}
\end{figure}

One is now interested in the \emph{self-motions} of such a configuration, i.e., the set of direct isometries $\sigma \in \SE_3$ of~$\RR^3$ such that
\begin{equation}
\label{eqSpherical}\tag{$\diamondsuit$}
	\left\| \sigma(P_i) - Q_i \right\| = \left\| P_i - Q_i \right\| \quad \text{for all } i \in \{ 1, \dotsc, n \} \,.
\end{equation}
These conditions are called \emph{spherical conditions} or \emph{leg-equations}. 
An $n$-pod is called \emph{mobile}, if the set of its self-motions, 
called the \emph{configuration space} of the $n$-pod, is at least one-dimensional.
One-dimensional subvarieties of this set are called \emph{configuration curves}.

The group of direct isometries~$\SE_3$ of~$\RR^3$ is $6$-dimensional, 
since it is the semidirect product of the $3$-dimensional group of rotations with the $3$-dimensional group of translations. 
Therefore, an $n$-pod is always expected to move if $n<6$ since in this case there are at most $5$ spherical conditions. 
The most interesting case is then $n=6$, which is the smallest value for which a generic $n$-pod does not move, but special ones do. 
The classification of mobile hexapods is still wide open. 

Consider the spherical conditions from Equation~\eqref{eqSpherical}. 
These define linearly equivalent divisors on~$\SE_3$. 
The linear system containing these divisors defines a natural embedding
\[
	\SE_3 \subset \PP^{16}
\]
as was noticed by~\cite{BondTheory} and \cite[Section~5]{Mourrain1996}. 
Let $\SEbar$ be the compactification of~$\SE_3$ in this embedding: 
it is a projective variety of dimension~$6$ and degree~$40$. 
By construction, the spherical conditions are now \emph{linear} in these coordinates, 
i.e., they define hyperplanes in~$\PP^{16}$. 

In this paper we take the view that the basic invariants of a mobile hexapod with one degree of freedom
are the degree and the codimension of the span of its configuration curve in~$\PP^{16}$. 
In this setting, we have the following restrictions, which we depict in Table~\ref{tPhaseDiagram}:
\begin{enumerate}[label=(R\Alph*)]
 \item No configuration curve can have degree higher than~$40$ because they are linear sections of~$\SEbar$, which itself has degree~$40$. 
 Moreover, since $\SEbar$ has dimension~$6$, then no configuration curve can have a span of codimension smaller than~$5$. \label{restriction:degree}
 \item Since $\SEbar \subset \PP^{16}$ is defined by quadrics, if the dimension of the span of a configuration curve is~$i$ then the degree of that curve cannot be more than~$2^{i-1}$. \label{restriction:quadrics}
 \item From general results about varieties of minimal degree, we know that the dimension of the span of a configuration curve cannot exceed its degree, with equality for rational normal curves. \label{restriction:minimal}
\end{enumerate}
Moreover, in the literature one can find the following constructions and restrictions, also reported in Table~\ref{tPhaseDiagram}:
\begin{enumerate}[label=(L\arabic*)]
 \item When the codimension of the span is precisely~$5$, then the configuration curve has degree~$40$.
 All these hexapods are \emph{architecturally singular}. 
 These are pods whose legs in a prescribed configurations can be perturbed with less than $6$ degrees of freedom (which is the standard situation). 
 This is a well-studied class of mechanisms; see \cite{Husty2000a, Roschel1998, Karger2008, Nawratil2009}. 
 In \cite{Hauenstein2018}, where investigations about architecturally singular hexapods are made using numerical algebraic geometry, 
 a subfamily whose elements are called \emph{Segre-dependent Stewart-Gough platforms} is studied. \label{literature:architecturally_singular}
 \item From \cite{Gallet2017}, we know that the values $(28, 6)$ are attained at the family of so-called \emph{liaison hexapods}; moreover, $28$ is also the maximal possible degree for hexapods that are not architecturally singular and do not satisfy some degeneracy condition, and it is attained only for the liaison case. We do not have information about hexapods satisfying these degeneracy condition. \label{literature:liaison}
 \item The value $(12, 10)$ is attained by hexapods obtained by taking only $6$ legs from a mobile icosapod (which is a pod with $20$ legs); see~\cite{Borel1908} and \cite{GalletNawratilSchichoSelig} for a more recent account. To this class belong the hexapods created by Gei\ss\ and Schreyer~\cite{Geis2009} using computations over finite fields. \label{literature:icosapod}
 \item The values $(6,10)$, $(8,9)$, and $(12,10)$ are attained by \emph{Bricard flexible octahedra}, where three pairs of base and platform points coincide; see~\cite{Bricard1897}.\label{literature:octahedra} 
 \item The value $(4,13)$ is attained by the so-called Borel-Bricard hexapods, whose configuration curve is a quartic of genus~$1$. The two foundational papers of Borel~\cite{Borel1908} and Bricard~\cite{Bricard1906} describe these mobile hexapods. \label{literature:borel_bricard}
 \item The value $(2,14)$ is attained by hexapods allowing a so-called \emph{butterfly motion}, where all platform points move around a fixed axis, and so the configuration curve is a conic; see~\cite{Nawratil2014a}. \label{literature:butterfly}
 \item The value $(24,6)$ is attained for \emph{(orthogonal) point-symmetric} hexapods; see~\cite{Nawratil2014a} and \cite{Dietmaier1996}. \label{literature:point_symmetric}
 \item Nawratil investigated mobile hexapods in which base and platform differ by a projectivity~\cite{Nawratil2012}, or an isometry~\cite{Nawratil2014c}, or an isometry followed by a scaling~\cite{Nawratil2013c}. The latter two cases provide the values~$(4,13)$ and~$(18,8)$, respectively. \label{literature:congruent_equiform}
\end{enumerate}
We thank Georg Nawratil for providing us information about these known families of mobile hexapods, 
and we refer to his recent work~\cite{Nawratil2018} for a more detailed account on these families.

\definecolor{orange}{RGB}{230,159,0}
\definecolor{skyblue}{RGB}{86,180,233}
\definecolor{bluishgreen}{RGB}{0,158,115}
\definecolor{vermilion}{RGB}{213,94,0}
\definecolor{reddishpurple}{RGB}{204,121,167}
\definecolor{yellow}{RGB}{240,228,66}

\newcommand{\ccol}{\cellcolor}
\newcommand{\rectcol}[3]{\tikz \fill [#1] (0,0) rectangle (#2,#3);}
\renewcommand{\arraystretch}{1.3}

\begin{center}
\begin{table}[ht]
 \begin{threeparttable}
  \caption{Degrees and codimension (in $\PP^{16}$) of the span of configuration curves of mobile hexapods.}
    \begin{tabular}{ c | p{0.06\textwidth} p{0.06\textwidth} p{0.06\textwidth} p{0.06\textwidth} p{0.06\textwidth} p{0.06\textwidth} p{0.06\textwidth} p{0.06\textwidth} p{0.06\textwidth} p{0.06\textwidth} }
 \toprule
 40 & \ccol{orange} & \ccol{orange}& \ccol{orange} & \ccol{orange} & \ccol{orange} & \ccol{orange} & \ccol{orange} & \ccol{orange} & \ccol{orange} & \ref{literature:architecturally_singular} \\ \hline
 $\vdots$ & \ccol{skyblue} & \ccol{skyblue} & \ccol{skyblue} & \ccol{skyblue} & \ccol{skyblue} & \ccol{bluishgreen} & \ccol{bluishgreen} & \ccol{bluishgreen} & \ccol{bluishgreen} & \ccol{orange}\\ \hline
 32 & \ccol{skyblue} & \ccol{skyblue} & \ccol{skyblue} & \ccol{skyblue} & \ccol{skyblue} & \ccol{bluishgreen} & \ccol{bluishgreen} & \ccol{bluishgreen} & \ccol{bluishgreen} & \ccol{orange}\\ \hline
 $\vdots$ & \ccol{skyblue} & \ccol{skyblue} & \ccol{skyblue} & \ccol{skyblue} & \ccol{bluishgreen} & \ccol{bluishgreen} & \ccol{bluishgreen} & \ccol{bluishgreen} & \ccol{bluishgreen} & \ccol{orange}\\ \hline
 28 & \ccol{skyblue} & \ccol{skyblue} & \ccol{skyblue} & \ccol{skyblue} & \ccol{bluishgreen} & \ccol{bluishgreen} & \ccol{bluishgreen} & \ccol{bluishgreen} & \ref{literature:liaison} & \ccol{orange}\\ \hline
 $\vdots$ & \ccol{skyblue} & \ccol{skyblue} & \ccol{skyblue} & \ccol{skyblue} & \ccol{vermilion} & \ccol{vermilion} & \ccol{vermilion} & \ccol{vermilion} && \ccol{orange}\\ \hline
 24 & \ccol{skyblue} & \ccol{skyblue} & \ccol{skyblue} & \ccol{skyblue} & \ccol{vermilion} & \ccol{vermilion} & \ccol{vermilion} & \ccol{vermilion} & \ref{literature:point_symmetric} & \ccol{orange}\\ \hline
 $\vdots$ & \ccol{skyblue} & \ccol{skyblue} & \ccol{skyblue} & \ccol{skyblue} & \ccol{vermilion} & \ccol{vermilion} & \ccol{vermilion} & \ccol{vermilion} && \ccol{orange}\\ \hline
 18 & \ccol{skyblue} & \ccol{skyblue} & \ccol{skyblue} & \ccol{skyblue} & \ccol{vermilion} & \ccol{vermilion} & \ref{literature:congruent_equiform} \ccol{vermilion} & \ccol{vermilion} && \ccol{orange}\\ \hline
 16 & \ccol{skyblue} & \ccol{skyblue} & \ccol{skyblue} & \ccol{skyblue} & \ccol{vermilion} & \ccol{vermilion} & \ccol{reddishpurple} & \ccol{vermilion} && \ccol{orange}\\ \hline
 14 & \ccol{skyblue} & \ccol{skyblue} & \ccol{skyblue} && \ccol{vermilion} & \ccol{vermilion} & \ccol{reddishpurple} & \ccol{reddishpurple} && \ccol{orange}\\ \hline
 12 & \ccol{skyblue} & \ccol{skyblue} & \ccol{skyblue} & \ref{literature:icosapod} & \ref{literature:octahedra} \ccol{vermilion} & \ccol{vermilion} & \ccol{reddishpurple} & \ccol{reddishpurple} && \ccol{orange}\\ \hline
 10 & \ccol{skyblue} & \ccol{skyblue} & \ccol{skyblue} && \ccol{vermilion} & \ccol{vermilion} & \ccol{reddishpurple} & \ccol{reddishpurple} && \ccol{orange}\\ \hline
 8  & \ccol{skyblue} & \ccol{skyblue} &&& \ccol{vermilion} & \ref{literature:octahedra} \ccol{reddishpurple} & \ccol{reddishpurple} & \ccol{yellow} & \ccol{yellow} & \ccol{orange}\\ \hline
 6  & \ccol{skyblue} & \ccol{skyblue} &&& \ref{literature:octahedra} \ccol{reddishpurple} & \ccol{yellow} & \ccol{yellow} & \ccol{yellow} & \ccol{yellow} & \ccol{orange}\\ \hline
 4  & \ccol{skyblue} & \ref{literature:borel_bricard} \ref{literature:congruent_equiform} && \ccol{yellow} & \ccol{yellow} & \ccol{yellow} & \ccol{yellow} & \ccol{yellow} & \ccol{yellow} & \ccol{orange}\\ \hline
 2  & \ref{literature:butterfly} & \ccol{yellow} & \ccol{yellow} & \ccol{yellow} & \ccol{yellow} & \ccol{yellow} & \ccol{yellow} & \ccol{yellow} & \ccol{yellow} & \ccol{orange}\\ \hline
   & 14 & 13 & 12 & 11 & 10 & 9 & 8 & 7 & 6 & 5 \\
 \bottomrule
\end{tabular}
 \begin{tablenotes}[para,flushleft]
{\protect\rectcol{orange}{0.6}{0.25}} These entries cannot occur because of restriction \ref{restriction:degree}. \\
{\protect\rectcol{skyblue}{0.6}{0.25}} These entries cannot occur because of restriction \ref{restriction:quadrics}. \\
{\protect\rectcol{yellow}{0.6}{0.25}} These entries cannot occur because of restriction \ref{restriction:minimal}. \\
{\protect\rectcol{bluishgreen}{0.6}{0.25}} If these entries occur, by item \ref{literature:liaison} they must satisfy some degeneracy conditions. \\
{\protect\rectcol{vermilion}{0.6}{0.25}} For these hexapods, the span of their configuration curve must intersect the center of the projection $\hat{\pi} \colon \PP^{16} \dashrightarrow \PP^9$. This is proved in this paper. \\
{\protect\rectcol{reddishpurple}{0.6}{0.25}} These mobile hexapods exist (over the complex numbers). This is proved in this paper.
\end{tablenotes}
\end{threeparttable}
\label{tPhaseDiagram}
\end{table}
\end{center}
\renewcommand{\arraystretch}{1}

To contribute to this classification, we consider $\PPbondDual$, the space of all hyperplanes in~$\PP^{16}$. 
In this space, the possible spherical conditions (recall Equation~\eqref{eqSpherical}) for pairs of points~$(P,Q)$ form a subvariety
\[
	\RR^3 \times \RR^3 \subset \PPbondDual \,.
\]
Compactifying this space, we obtain
\[
	\PP^3 \times \PP^3 \subset \PPbondDual 
\]
where the inclusion is given by the Segre embedding. The classification of mobile hexapods is then the following task:

\begin{center}
``Find $6$ points in $\PP^3 \times \PP^3 \subset \PPbondDual$ such that the intersection of the corresponding hyperplanes with~$\SEbar$ contains a curve~$D$.''
\end{center}

In this paper we adopt a slight, but useful, change of perspective:

\begin{center}
``Find curves $D \subset \SEbar$ such that the space $\langle D \rangle^\perp \subset \PPbondDual$ of hyperplanes containing~$D$ intersects~$\PP^3 \times \PP^3$ in at least $6$ points.''
\end{center}

The advantage of this perspective is that the classification of algebraic curves~$D$ inside a larger algebraic variety is a classical and well-developed branch of algebraic geometry, which provides the tools for the present article. 
In particular, the condition that the ideal of~$D$ contains many linear forms gives a strong restriction on the possible types of such curves. 

To state our main theorems, some further (well-known) geometry is needed. Consider the natural projection
\[
	\pi \colon \SE_3 \to \SO_3 \,.
\]
By a theorem of Euler we can represent~$\SO_3$ as an open subset of~$\PP^3$ (see Theorem~\ref{tEuler}). 
The natural projection above then extends to a rational map
\[
	\pi \colon \SEbar \dashrightarrow \PP^3 \,,
\]
which, in turn, is the composition of a linear projection $\hat{\pi} \colon \PP^{16} \dashrightarrow \PP^9$ 
and the inverse of the Veronese embedding $\PP^3 \longrightarrow \PP^9$.
We say that the projection under~$\pi$ of a curve in~$\SEbar$ is an \emph{Euler curve}.

In this paper we prove the following:

\begin{classificationTheorem} 
Let $D \subset \SEbar \subset \PP^{16}$ be an irreducible curve such that 
the span~$\langle D \rangle$ does not intersect the center of the projection $\hat{\pi} \colon \PP^{16} \dashrightarrow \PP^9$ defined above. 
Assume that $\codim \langle D \rangle = c+7$ for some nonnegative number~$c$. 
Then $\pi(D)$ is one of the curves listed in Table~\ref{tWithQuadrics} and Table~\ref{tWithoutQuadrics}.
\end{classificationTheorem}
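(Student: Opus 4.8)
The plan is to transport the problem forward under $\pi$ and to reinterpret the codimension hypothesis as a count of quadrics through the Euler curve $E := \pi(D) \subset \PP^3$. Since $\langle D \rangle$ is disjoint from the center of the linear projection $\hat{\pi}$, the map $\hat{\pi}$ is defined on all of $\langle D \rangle$ and restricts there to a linear isomorphism onto its image; hence $\dim \langle \hat{\pi}(D) \rangle = \dim \langle D \rangle = 16-(c+7) = 9-c$. Because $\pi$ is the inverse Veronese composed with $\hat{\pi}$, we have $\hat{\pi}(D) = v(E)$, where $v \colon \PP^3 \hookrightarrow \PP^9$ is the Veronese embedding. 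Identifying hyperplanes of $\PP^9$ with quadrics of $\PP^3$, such a hyperplane contains $v(E)$ precisely when the corresponding quadric vanishes on $E$, so $\codim_{\PP^9} \langle v(E) \rangle = h^0\!\left( \mathcal{I}_E(2) \right)$. Comparing with the dimension count shows that $E$ lies on \emph{exactly} $c$ linearly independent quadrics. This reformulation is the crux of the argument.

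The same isomorphism $\hat{\pi}|_{\langle D \rangle}$ controls degrees as well: a linear isomorphism of spans preserves the degree of a curve, so $\deg D = \deg v(E) = 2 \deg E$. Applying the degree bound of restriction~\ref{restriction:quadrics} to $D$, which spans a $\PP^{9-c}$ and lies inside $\SEbar$ (an intersection of quadrics), gives $2\deg E = \deg D \le 2^{8-c}$, hence $\deg E \le 2^{7-c}$; and $\dim\langle D\rangle = 9-c \ge 1$ leaves only the finitely many values $0 \le c \le 8$. It thus suffices to classify, for each such $c$, the irreducible curves $E \subset \PP^3$ lying on exactly $c$ quadrics, recording their degree and genus.

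I would then argue by cases on $c$, that is, on the number of quadrics through $E$. For $c \ge 2$, choose two independent quadrics $Q_1, Q_2$ through $E$: if they share no component then $E \subseteq Q_1 \cap Q_2$ has degree at most $4$; if they share one, it must be a common plane, forcing the irreducible $E$ to be planar or a line. In each sub-case degree and genus are immediate, and computing the exact number of quadrics through the resulting curve places it in Table~\ref{tWithQuadrics}. For $c = 1$ the curve lies on a unique quadric surface $Q$; separating $Q$ smooth (where $E$ acquires a bidegree on $\PP^1 \times \PP^1$) from $Q$ a cone, and using that no \emph{second} independent quadric may contain $E$ to cap the admissible bidegrees, again yields a finite list for Table~\ref{tWithQuadrics}. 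Finally $c = 0$ means $v(E)$ spans $\PP^9$ and $E$ lies on no quadric; here I would intersect the degree bound above with the Castelnuovo bound on the genus of a nondegenerate space curve of given degree to enumerate the admissible degree--genus pairs, which populate Table~\ref{tWithoutQuadrics}.

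The main obstacle is the low-quadric range $c \le 1$. For $c \ge 2$ the intersection of two quadrics immediately caps the degree at $4$ and the classification is short, but for $c = 1$ and especially $c = 0$ the degree of $E$ can be large, so naive enumeration does not terminate and one must genuinely exploit the Castelnuovo genus bound together with the restriction $\deg E \le 2^{7-c}$. The delicate bookkeeping is to keep the quadric count \emph{exact} rather than as an upper bound: a curve on a quadric surface, or one of small degree, may accidentally lie on more quadrics than its generic family member, and ruling out every curve whose true quadric count differs from the prescribed $c$ is exactly what makes the two tables simultaneously complete and mutually exclusive.
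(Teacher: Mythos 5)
Your opening reduction is correct and agrees with the paper's setup: since $\langle D \rangle$ misses the center of $\hat{\pi}$, the projection is a linear isomorphism on the span, and identifying hyperplanes of $\PP^9$ with quadrics of $\PP^3$ shows that $E = \pi(D)$ lies on exactly $c$ independent quadrics, with $\deg D = 2\deg E$. But from that point on the argument has a fatal gap: the only structural information you retain about $\SEbar$ is that it is cut out by quadrics, via the bound of restriction~\ref{restriction:quadrics}, and that bound is hopelessly weak. For $c=0$ it gives $\deg E \le 128$ and for $c=1$ it gives $\deg E \le 64$, whereas the tables stop at degree $7$ and $8$. Worse, the class you propose to classify is genuinely large: for $c=1$, \emph{every} curve of bidegree $(a,b)$ with $a,b\ge 3$ on a smooth quadric lies on exactly one quadric (a quadric containing it restricts to a $(2,2)$ divisor, which cannot contain it unless it vanishes), so nothing in your hypotheses ``caps the admissible bidegrees'' --- that step is asserted, not proved, and is false in the generality you are working in. Similarly for $c=0$: a general rational curve of degree $8$ lies on no quadric and satisfies Castelnuovo's bound, so it survives your sieve but does not appear in Table~\ref{tWithoutQuadrics}. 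Finally, the theorem's conclusion specifies Betti tables and, in the $(\ast)$ cases, that $X$ is a complete intersection of a quadric and a quartic; a degree--genus enumeration cannot deliver this.

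The missing idea is the paper's use of the second embedding $\SEbar \subset \weightedPP$, whose ideal is generated by the $4\times 4$ Pfaffians of the skew-symmetric matrix $M$ and one quartic (Proposition~\ref{pEquationsWeightedPP}). Because $\langle D \rangle$ misses the center, the $c+7$ linear forms cutting it out can be normalized so that seven of them express $x_1,\dots,x_3,y_1,\dots,y_3,r$ as quadrics $Q_{c+1},\dots,Q_{c+7}$ in $e_0,\dots,e_3$; substituting these into the Pfaffian and quartic equations turns them into forms of degree $\le 4$ on Euler's $\PP^3$, so that $E$ is an irreducible component of a scheme $X$ cut out by $c$ quadrics, four cubics, and two quartics --- exactly the situation of Proposition~\ref{pClassificationAlgebraic}. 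This is what produces the bounds your proposal lacks: for $c \ge 1$, some generator of $I_X$ of degree $\le 4$ does not vanish on the quadric, capping the bidegree at $(4,4)$ and yielding Table~\ref{tWithQuadrics} together with the $(\ast)$ conditions (via Proposition~\ref{pDeg3impliesDeg4}); for $c=0$, Proposition~\ref{p2pfaffians} forces $h^0\bigl(I_E(3)\bigr)\ge 3$, after which Naito's classification of resolutions in degree $\le 6$ (extended to integral curves in Appendix~\ref{sBetti}) and the generic-initial-ideal/Laudal bound of Proposition~\ref{pSeptic} pin the list down to the five entries of Table~\ref{tWithoutQuadrics}, with the degree-$7$ case identified as the determinantal septic of Proposition~\ref{p3pfaffians}. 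Without transporting the Pfaffian structure of $\SEbar$ through the projection, the classification simply does not close.
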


In particular, this shows that hexapods with invariants in Table~\ref{tPhaseDiagram} colored in vermilion (\rectcol{vermilion}{0.6}{0.25}) cannot satisfy the hypotheses of Theorem~\ref{tClassification}. 

We call a curve $D \subset \SEbar$ defined over~$\CC$ a \emph{generalized $n$-pod curve} if 
\[
	S(D) := \langle D \rangle^\perp \cap (\PP^3 \times \PP^3) \subset \PPbondDual
\]
is a zero-dimensional scheme of length at least~$n$. 
Every curve in~$\SEbar$ coming from a mobile hexapod is a generalized hexapod curve, where the leg-equations correspond to the points of~$S(D)$. 
Conversely, a generalized hexapod curve~$D$ comes from a mobile hexapod if the following conditions hold:
\begin{enumerate}
\item $S(D)$ is reduced;
\item $S(D)$ lies in the affine part of~$\PP^3 \times \PP^3$;
\item the reduced points of~$S(D)$ are individually defined over~$\RR$.
\end{enumerate}

The second main result of this paper is the following:

\begin{familyCorollary} 
For each entry in Table~\ref{tWithQuadrics} and Table~\ref{tWithoutQuadrics} not marked by $(\ast)$, 
there exists a positive-dimensional family of generalized hexapod curves with Euler curves of those invariants.
Lower bounds for the dimensions of these families are listed in Table~\ref{tWithQuadricsDimensions} and Table~\ref{tWithoutQuadricsDimensions}.
\end{familyCorollary}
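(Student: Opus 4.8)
The plan is to prove the statement entry by entry, through an explicit construction followed by a parameter count, organized so that the two tables are treated in parallel. Fix one entry, corresponding to an Euler-curve type $E \subset \PP^3$ of prescribed degree and span, and write $\codim \langle D \rangle = c+7$ as in Theorem~\ref{tClassification}. Since we restrict to curves whose span $\langle D \rangle$ avoids the center of $\hat{\pi}$, the map $\hat{\pi}$ is a linear isomorphism on $\langle D \rangle$, so $\pi|_D \colon D \to E$ is an isomorphism and $D$ is a lift of $E$ to $\SEbar$. The first step is to parametrize these lifts: over the locus where $\pi$ is the honest projection $\SE_3 \to \SO_3$, a lift is the graph of a translation-valued function on $E$, hence a section of the pullback to $E$ of the rank-three bundle that encodes the translational part of the projection $\pi$. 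Its splitting type on $E$ depends only on the type of $E$, so the dimension of the space $\mathcal{L}(E)$ of lifts is constant along the family of Euler curves of that kind, and I would read it off from that splitting type.

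The second step reformulates the hexapod condition. From $\codim \langle D \rangle = c+7$ we get $\dim \langle D \rangle = 9-c$, so $\langle D \rangle$ is spanned by $10-c$ points of $\PP^{16}$. A point $(P,Q) \in \PP^3 \times \PP^3$ lies in $S(D) = \langle D \rangle^\perp \cap (\PP^3 \times \PP^3)$ exactly when its Segre image is annihilated by each of these spanning points; pulling these conditions back along the Segre embedding, $S(D)$ becomes the common zero locus on $\PP^3 \times \PP^3$ of $10-c$ forms of bidegree $(1,1)$. We must exhibit lifts for which this locus is finite of length at least $6$. The key point is that the coefficients of these $10-c$ bilinear forms are linear in the lifting data, so as $D$ varies in $\mathcal{L}(E)$ the scheme $S(D)$ varies in a controlled, algebraic way, and the conditions ``$S(D)$ is finite'' and ``$\operatorname{length} S(D) \ge 6$'' become, respectively, an openness condition and a rank condition on explicit matrices built from $E$ and the lift.

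The main obstacle is to keep $S(D)$ \emph{finite} while forcing its length up to at least $6$. Since $\langle D \rangle^\perp$ has dimension $c+6$ and $\PP^3 \times \PP^3$ has dimension $6$, the expected dimension of $S(D)$ is $c-4$: for small $c$ a generic lift gives an empty, or too small, intersection, while for large $c$ a generic lift gives a positive-dimensional one, so in either regime the curves we want lie in a special --- but, as we show, still positive-dimensional --- subfamily. I would organize the count through the incidence variety
\[
\mathcal{I} \;=\; \bigl\{\, (D,\xi) : D \text{ a lift of some } E \text{ of the given type},\ \xi \subset S(D),\ \operatorname{length}\xi = 6 \,\bigr\},
\]
and estimate its dimension from the fibers of its projection to the space of $6$-tuples of legs in $\PP^3 \times \PP^3$. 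A lower bound for the dimension of the family of generalized hexapod curves is then the dimension of $\mathcal{I}$ minus the fiber dimension of the projection $\mathcal{I} \to \{D\}$; finiteness of $S(D)$ for the generic member, which validates the count on a dense open locus, follows from upper semicontinuity of fiber dimension once a single member with finite $S(D)$ of length $6$ has been produced.

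It therefore remains, for every entry not marked by $(\ast)$, to produce one explicit such member and to verify that $S(D)$ is finite of length at least $6$ and that $\langle D \rangle$ misses the center of $\hat{\pi}$; the recorded lower bounds in Tables~\ref{tWithQuadricsDimensions} and~\ref{tWithoutQuadricsDimensions} are the resulting estimates. I expect the verification to split exactly according to the two tables: when $E$ lies on a quadric it acquires a bidegree that simultaneously governs the splitting type of the translation bundle and the number of independent bilinear leg-equations, whereas off any quadric one works with minimal- and almost-minimal-degree models of $E$. For the entries marked $(\ast)$ the same construction yields only finitely many curves, or forces $S(D)$ to become positive-dimensional, so no positive-dimensional family exists; pinning down this dichotomy at the borderline entries is the delicate endpoint of the argument, and I would settle the hardest cases by direct symbolic elimination, reducing modulo a prime when the computation over $\CC$ is too large, as is standard for this family of problems.
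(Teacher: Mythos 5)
Your global architecture --- parametrize lifts of a fixed Euler curve, impose the six-leg condition through an incidence-variety count, and certify one member by computation modulo a prime --- does match the paper's (your incidence variety $\mathcal{I}$ is essentially the paper's $Y_6 \subset X_6 \times \GG$ from Proposition~\ref{pDimensionCount}, and your description of $S(D)$ as the zero locus on $\PP^3 \times \PP^3$ of $10-c$ forms of bidegree $(1,1)$ is correct). But your first step, the parametrization of lifts, has a genuine gap that would derail the dimension counts. A lift is not controlled by its restriction to the open locus where $\pi$ is the projection $\SE_3 \to \SO_3$: in the weighted embedding the coordinates $x_1,\dotsc,y_3$ have weight~$2$, so a lift of a curve $C$ is given by six quadrics in~$R_C$, constrained (i) by the linear Pfaffian conditions of Proposition~\ref{pNecessaryAlgebraic}, whose kernel bundle is identified via the Koszul complex (Lemma~\ref{lKoszul}) with $\TPthree|_C$ --- the restricted tangent bundle of~$\PP^3$, not a pulled-back rank-three ``translation bundle'' with splitting type constant along the family --- and (ii) by a boundary condition at each point of $C \cap \EulerInfty$: the section of $\TPthree|_C$ must lift to $T_{\EulerInfty}$ there (Proposition~\ref{pNecessaryGeometric}). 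These boundary conditions, invisible in your graph picture, contribute the $-(2d-d')$ correction, force the bond/contact analysis of Corollary~\ref{cContactButterfly}, and require an Euler curve meeting $\EulerInfty$ in a prescribed way ($d'$ double points, $2d-2d'$ simple points) --- exactly what the paper's finite-field computations plus the spreading-out argument of Appendix~\ref{finite_fields} (Proposition~\ref{pExistsCharZero}, needing $h^1(N_{C/\PP^3})=0$ and independence of the tangency conditions) provide. Moreover, even sufficiency of these conditions is nontrivial: one must construct the seventh coordinate $Q_7$, i.e.\ prove that $e_0^2+\cdots+e_3^2$ divides $Q_4^2+Q_5^2+Q_6^2$ in $R_C$, which Theorem~\ref{tConstruction} obtains only under the hypothesis that all intersection multiplicities with $\EulerInfty$ are at most~$2$; and constancy of the dimension of your $\mathcal{L}(E)$ would need $h^1(\TPthree|_C)=0$, which fails precisely in starred cases (e.g.\ $(d,g)=(7,6)$ has $h^1 = 2$).

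Two further concrete problems. You never impose that a generalized hexapod curve pass through the identity of $\SE_3$ --- this is part of the definition and accounts for the $-5$ in the paper's bound $\dim H' \ge h^0(\TPthree|_C) + 2d - 5 - 6(3-c)$; without it your counts cannot reproduce the tables. And your closing claim that for starred entries ``no positive-dimensional family exists'' is neither needed nor established: the paper proves nothing negative there; those rows are simply where its method does not apply (the scheme $X$ is a complete intersection of a quadric and a quartic, or $h^1(\TPthree|_C) \neq 0$). Conversely, your plan to exhibit, for every unstarred entry, an explicit member with $S(D)$ finite of length at least~$6$ is stronger than what the paper actually does: for $c<3$ the corollary derives \emph{expected}-dimension lower bounds from the Grassmannian count of Proposition~\ref{pDimensionCount}, stated conditionally on a single witness (for $c=3$, i.e.\ $\gamma=10$, nonemptiness is automatic since the intersection then has length $\deg(\PP^3\times\PP^3)=20$), while the explicit computations are spent only on producing the Euler curve with the prescribed behavior at infinity. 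So either supply the missing lift machinery (Pfaffian constraints, boundary lifting, $Q_7$) or your fibers $\mathcal{I} \to \{D\}$ and the base $\mathcal{L}(E)$ have no computed dimensions, and the whole count is unsupported.
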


The main new tool introduced in this paper, apart from the use of results of classical algebraic geometry, 
is the observation that there is a natural embedding of~$\SEbar$ in weighted projective space~$\weightedPP$ compatible with all the geometry described above. 
In this embedding, the variety~$\SEbar$ is an arithmetically Gorenstein $6$-fold of codimension~$4$ and therefore has highly structured equations. 
More precisely, the ideal of~$\SEbar$ in this embedding is generated by the $4 \times 4$ Pfaffians of a skew-symmetric $5 \times 5$ matrix and an additional equation of weighted degree~$4$.

The paper is structured as follows. 
Section~\ref{sCompactification} defines the compactification~$\SEbar$ of~$\SE_3$ and introduces the embedding $\SEbar \subset \weightedPP$. 
Section~\ref{sHexapods} describes $n$-pods and $n$-pod curves; this makes our change of perspective described above precise. 
Section~\ref{sInfinity} discusses the behavior at infinity of~$\SEbar$ and $\SObar \cong \PP^3$. 
In particular, we observe that $\SEbar$ is contact to the hyperplane at infinity and we clarify the relation between the contact condition in~$\PP^3$ and the existence of limits of hexapod configurations called butterfly bonds. 
The main technical tools are developed in Section~\ref{sPfaffianEquations} where we deduce some geometric consequences of the Pfaffian equations derived in Section~\ref{sCompactification}. 
This is used in Section~\ref{sClassification} to prove our Classification Theorem~\ref{tClassification}. 
In Section~\ref{sConstruction} we prove the Construction Theorem~\ref{tFamilies} and Corollary~\ref{cFamilyGeneralizedHexapods}. 
The Appendices collect useful facts from the classification of space curves and finite fields techniques.

Some of our results rely on computer algebra calculations.
The reader can find the relevant code files on Zenodo at~\cite{software}.

\textbf{Acknowledgments.} We thank the Erwin Schr\"{o}dinger Institute (ESI) of the University of Vienna for the hospitality during the workshop ``Rigidity and Flexibility of Geometric Structures'', when the three authors could meet and discuss. Moreover, we thank Georg Nawratil for useful discussions about known families of mobile hexapods.

\section{Compactifications of \texorpdfstring{$\SE_3$}{SE3}} \label{sCompactification}

In this section we discuss a compactification of~$\SE_3$ 
introduced by Gallet, Nawratil and Schicho in~\cite{BondTheory}.

\begin{definition}
Let $\SE_3$ be the algebraic group of direct isometries of~$\RR^3$ with respect to the standard scalar product, 
i.e., the set of pairs $(A,y)$ with $A \in \SO_3$ and $y \in \RR^3$. 
The action of~$\SE_3$ on~$\RR^3$ is then given by
\[
	v \mapsto Av + y \,.
\]
\end{definition}

The compactification of~$\SE_3$ defined in~\cite{BondTheory} is the following:

\begin{definition} \label{dBondEmbedding}
Consider the injective morphism
\begin{align*}
	\phi \colon \SE_3 &\longrightarrow \PP^{16}_{\RR} \\
	(A,y) &\mapsto \bigl(A:\underbrace{-A^t y}_{:=x}:y:\underbrace{\langle y,y \rangle}_{:=r}:1\bigr)
\end{align*}
The coordinate ring of~$\PP^{16}_{\RR}$ is $R := \RR[a_{1,1}, \dotsc, a_{3,3}, x_1, \dotsc, x_3, y_1, \dotsc, y_3, r, h]$.

Let now $I_{\SEbar} \subset R$ be the ideal of homogeneous polynomials vanishing on~$\phi(\SE_3)$ and set
\[
	\SEbar := V(I_{\SEbar})
\]
the vanishing set of these polynomials. 
The variety $\SEbar \subset \PP^{16}_{\RR}$ is then a compactification of~$\SE_3$.
\end{definition}

\begin{remark}
\cite{BondTheory} gives explicit equations of~$\SEbar$.
\end{remark}

The important advantage of this embedding is that the spherical conditions are \emph{linear} in the above coordinates:

\begin{proposition}[Gallet-Nawratil-Schicho] \label{pLegEquations}
Let $P = (p_1,p_2,p_3)$ and $Q = (q_1,q_2,q_3)$ be two points in~$\RR^3$, and $(A,x,y,r,h) \in \SE_3$.
Then the condition $\left\| (A,y)(P) - Q \right\| = \left\| P-Q \right\|$ can be written as
\[
	\begin{pmatrix} Q^t & 1 \end{pmatrix}
	\begin{pmatrix}
		A - h \id & y \\
		x^t & - \frac{1}{2} r
	\end{pmatrix}
	\begin{pmatrix} P \\ 1 \end{pmatrix} 
	 = 0.
\]
This equation is called the \emph{leg-equation} $l_{P,Q}$ of~$P$ and $Q$.
\end{proposition}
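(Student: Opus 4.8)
The plan is to verify the stated identity by a direct computation, reducing the displayed matrix equation to the squared form of the spherical condition. First I would observe that, since both sides of $\left\| (A,y)(P) - Q \right\| = \left\| P-Q \right\|$ are nonnegative, the condition is equivalent to the equality of squares $\left\| (A,y)(P) - Q \right\|^2 = \left\| P-Q \right\|^2$; this is a polynomial relation and hence the natural object to compare against the bilinear matrix expression. Recalling that the action is $P \mapsto AP + y$, the left-hand square equals $\langle AP + y - Q,\, AP + y - Q \rangle$.

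Next I would expand both squared distances by bilinearity. Using that $A \in \SO_3$ is orthogonal, so that $\langle AP, AP \rangle = \langle P, P \rangle$, the terms $\langle P, P \rangle$ and $\langle Q, Q \rangle$ occur on both sides and cancel. Collecting everything on one side, the spherical condition becomes
\[
	\langle y, y \rangle + 2\langle AP, y \rangle - 2\langle AP, Q \rangle - 2\langle y, Q \rangle + 2\langle P, Q \rangle = 0 .
\]
In parallel, I would expand the matrix product on the image $\phi(\SE_3)$, where $h = 1$, $x = -A^t y$, and $r = \langle y, y \rangle$. Computing the product with the column vector and then contracting with $\begin{pmatrix} Q^t & 1 \end{pmatrix}$ yields
\[
	Q^t(A - \id)P + Q^t y + x^t P - \tfrac{1}{2} r .
\]

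The decisive step is the substitution $x^t = -y^t A$, which turns $x^t P$ into $-\langle AP, y \rangle$; rewriting the remaining products as inner products then shows, term by term, that this expression equals exactly $-\tfrac{1}{2}$ times the left-hand side of the reduced spherical condition above (here $-\tfrac{1}{2} r = -\tfrac12\langle y,y\rangle$ supplies the quadratic term). Since $-\tfrac{1}{2} \neq 0$, the matrix equation vanishes if and only if the reduced condition holds, which by the first step is equivalent to the original equality of distances.

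The main obstacle, such as it is, is purely bookkeeping: tracking the signs and the transpose introduced by the substitution $x = -A^t y$, and keeping in mind that the symbols $(A, x, y, r, h)$ denote a single point of $\SE_3$ with $h = 1$ (in its $\phi$-coordinates) rather than free variables. There is no genuine difficulty beyond this, since the claim is an algebraic identity that is confirmed by matching the two expansions.
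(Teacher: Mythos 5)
Your proposal is correct and follows essentially the same route as the paper: square the distance condition, expand by bilinearity using orthogonality of $A$ to cancel $\langle AP, AP\rangle = \langle P,P\rangle$, substitute $x = -A^t y$, $r = \langle y,y\rangle$, $h = 1$, and match the result with the expanded matrix product up to the nonzero factor $-\tfrac{1}{2}$. The only difference is presentational — you expand the matrix expression explicitly and compare, while the paper massages the inner-product expansion into the factored form $-2\bigl(\langle x,P\rangle + \langle Q,AP\rangle + \langle Q,y\rangle - \tfrac{1}{2}r - \langle Q,P\rangle h\bigr)$ and asserts the match — which is immaterial.
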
 

\begin{proof}
Using the scalar product in~$\RR^3$, we can rewrite the equation above as
\begin{align*}
	0 = &\, \langle AP + y - Q, AP + y - Q \rangle - \langle P-Q,P-Q \rangle \\
	  = &\, \langle AP,AP \rangle + 2 \langle y,AP \rangle - 2 \langle Q,AP \rangle - 2 \langle Q,y \rangle + \langle Q,Q \rangle + \langle y,y \rangle \\
	    & - (\langle P,P \rangle + \langle Q,Q \rangle - 2 \langle Q,P \rangle) \\
	  = &\, 2 \langle y,AP \rangle - 2 \langle Q,AP \rangle - 2 \langle Q, y\rangle + \langle y,y \rangle + 2 \langle Q,P \rangle \\
	  = &\, -2 \langle x,P \rangle - 2 \langle Q,AP \rangle - 2 \langle Q,y \rangle + r + 2 \langle Q,P \rangle \\
	  = &\, -2 \bigl(\langle x,P \rangle + \langle Q,AP \rangle + \langle Q,y \rangle - \frac{1}{2} r - \langle Q,P \rangle h\bigr)
\end{align*}
with~$h=1$. But this is exactly what we get if we expand the matrix equation in the statement.
\end{proof}

We now recall a surprising theorem of Euler:

\begin{theorem}[Euler]
\label{tEuler}
Let $\SO_3$ be the group of orthogonal $3 \times 3$ matrices. 
Let $\PP^3_{\RR}$ be the projective space with coordinates $e_0, \dotsc, e_3$. 
Set furthermore
\[
	h := e_0^2+e_1^2 + e_2^2 + e_3^2
\]
and let 
$\bigl(\PP^3_{\RR}\bigr)^0 := \PP^3_{\RR} \backslash \{ h=0 \}$ 
be the open set where $h$ does not vanish. 
Then the morphism
\[
	\eulerParam \colon \bigl( \PP^3_{\RR}\bigr)^0 \to \SO_3
\]
given by 
\[
	\eulerParam(e_0:e_1:e_2:e_3) := 
	\frac{1}{h}
	\begin{pmatrix}
		e_{0}^{2}+e_{1}^{2}-e_{2}^{2}-e_{3}^{2}&
		2 e_{1} e_{2} - 2 e_{0} e_{3}&
		2 e_{0} e_{2} + 2 e_{1} e_{3}\\
		2 e_{1} e_{2} + 2 e_{0} e_{3}&
		e_{0}^{2}-e_{1}^{2}+e_{2}^{2}-e_{3}^{2}&
		-2 e_{0} e_{1} + 2 e_{2} e_{3}\\
		-2 e_{0} e_{2} + 2 e_{1} e_{3}&
		2 e_{0} e_{1} + 2 e_{2} e_{3}&
		e_{0}^{2}-e_{1}^{2}-e_{2}^{2}+e_{3}^{2}
	\end{pmatrix}
\]
is an isomorphism. 
\end{theorem}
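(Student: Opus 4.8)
The plan is to recognize $\eta$ as the classical double cover of~$\SO_3$ by unit quaternions, descended to projective space, and then to make the claimed isomorphism explicit by exhibiting the inverse as an honest morphism. First I would identify $\RR^4$ with the quaternions~$\mathbb{H}$ by sending $(e_0,e_1,e_2,e_3)$ to $q = e_0 + e_1 i + e_2 j + e_3 k$, so that the quadratic form $h$ becomes the squared norm $q\bar q = |q|^2$ and the pure imaginary part $\mathrm{Im}\,\mathbb{H} = \langle i,j,k\rangle$ is identified with~$\RR^3$. For $q$ with $|q|\neq 0$ I consider the conjugation map $\rho_q \colon v \mapsto q v q^{-1}$ on~$\mathbb{H}$.

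Next I would check that $\rho_q$ lands in~$\SO_3$. Since $\mathrm{Re}(qvq^{-1}) = \mathrm{Re}(v)$, the map $\rho_q$ fixes the real axis and sends $\mathrm{Im}\,\mathbb{H}$ to itself, hence restricts to a linear self-map of~$\RR^3$; because $|qvq^{-1}| = |v|$ this restriction is orthogonal, and because the unit quaternions form a connected set with $\rho_1 = \id$ its determinant is~$1$. A direct evaluation of $\rho_q(i), \rho_q(j), \rho_q(k)$ then shows that the matrix of $\rho_q|_{\RR^3}$ in the basis $i,j,k$ is exactly $\eta(e_0:\dotsb:e_3)$, the factor~$1/h$ arising from $q^{-1} = \bar q/|q|^2$. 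In particular the right-hand side of the stated formula is genuinely homogeneous of degree~$0$, so $\eta$ is a well-defined morphism on~$\bigl(\PP^3_\RR\bigr)^0$ with image in~$\SO_3$.

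For bijectivity I would use that every element of~$\SO_3$ is a rotation through some angle~$\theta$ about some unit axis $n \in \mathrm{Im}\,\mathbb{H}$, and is the image under~$\rho$ of the unit quaternion $\cos(\theta/2) + \sin(\theta/2)\,n$; this gives surjectivity. Conversely each projective point has a unit representative~$q$, unique up to sign, and $\rho_q = \rho_{q'}$ forces $q' = \pm q$, hence the same projective point; so $\eta$ is injective. To upgrade this bijection to an isomorphism of varieties I would write down the inverse. Reading off the symmetric and antisymmetric parts of $M = \eta(e)$ recovers the degree-two monomials in the~$e_i$: for instance the trace yields $4e_0^2/h = 1 + \mathrm{tr}\,M$, the antisymmetric combinations $m_{ij}-m_{ji}$ yield the products $4e_0e_k/h$, and the analogous diagonal and symmetric off-diagonal combinations yield $4e_i^2/h$ and $4e_ie_j/h$. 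These assemble into four local formulas $\SO_3 \to \PP^3$, one per pivot~$e_i$, each regular where the corresponding $4e_i^2/h$ is nonzero; since some~$e_i$ is always nonzero these charts cover~$\SO_3$, agree on overlaps, and define a morphism $\SO_3 \to \bigl(\PP^3_\RR\bigr)^0$ that one verifies is a two-sided inverse of~$\eta$.

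The main obstacle is precisely this last step: a bijective morphism of varieties need not be an isomorphism, so the content lies in producing the inverse as a genuine morphism and checking that the four pivot charts glue consistently. By contrast, the identification in the second paragraph of the quaternion-conjugation matrix with the explicit Euler formula is routine but bookkeeping-heavy, and I would relegate that computation to a direct expansion of $q\,i\,\bar q$, $q\,j\,\bar q$, $q\,k\,\bar q$.
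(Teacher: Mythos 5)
Your proposal is correct, but it is worth noting that the paper offers no actual argument to compare against: its proof of Theorem~\ref{tEuler} consists of the single sentence ``This is a straightforward computation,'' leaving implicit both the verification that $\eta$ lands in $\SO_3$ and, more importantly, the construction of a regular inverse. Your route supplies exactly what that sentence glosses over. The quaternion-conjugation picture ($q \mapsto \rho_q$, with $\eta$ the descent of the double cover $S^3 \to \SO_3$ to $\PP^3_{\RR}$) gives a conceptual reason the formula parametrizes rotations, and your surjectivity (axis--angle) and injectivity (kernel $\{\pm 1\}$ of $\rho$ on unit quaternions) arguments are standard and sound. Crucially, you correctly identify the one genuinely non-routine point -- a bijective morphism need not be an isomorphism -- and resolve it by exhibiting the inverse through the monomial-extraction formulas $4e_0^2/h = 1 + \operatorname{tr} M$, $4e_0e_k/h = m_{ij}-m_{ji}$, $4e_ie_j/h = m_{ij}+m_{ji}$, glued over the four pivot charts; I have checked these identities against the stated matrix and they are right. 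One small repair: your justification that the charts cover $\SO_3$ (``since some $e_i$ is always nonzero'') implicitly invokes surjectivity of $\eta$, which is fine on real points but circular if one wants the scheme-theoretic statement; better is the intrinsic observation that the four chart functions $1+\operatorname{tr} M$, $1+m_{11}-m_{22}-m_{33}$, $1-m_{11}+m_{22}-m_{33}$, $1-m_{11}-m_{22}+m_{33}$ sum identically to $4$, so they cannot simultaneously vanish at any point of $\SO_3$ over a field of characteristic different from $2$. With that adjustment, your write-up is a complete proof, and arguably the one the authors had in mind when they wrote ``straightforward computation'': the only algebra left is verifying $\eta(\mathrm{inv}(M)) = M$ modulo the relations $MM^t = I$, $\det M = 1$, which is mechanical.
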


\begin{proof}
This is a straightforward computation.
\end{proof}

Using this, we construct a new embedding for~$\SEbar$:

\begin{definition} \label{dWeightedEmbedding}
Euler's parametrization~$\eulerParam$ induces an embedding
\[
	\weightedEmbedding \colon \weightedPP \to \PP^{16}
\]
with
\begin{align*}
	&\weightedEmbedding(e_0:e_1:e_2:e_3:x_1:x_2:x_3:y_1:y_2:y_3:r) \\
	&:= \bigl( h \, \eulerParam(e_0:e_1:e_2:e_3) : x_1:x_2:x_3:y_1:y_2:y_3:h:r \bigr)
\end{align*}
where recall that $h = e_0^2 + e_1^2 + e_2^2 + e_3^2$.
\end{definition}

\begin{remark}
This is just the $2$-uple embedding of~$\weightedPP$ written in interesting coordinates.
\end{remark}

\begin{proposition} \label{pEquationsWeightedPP}
There is a natural embedding
\[
	\SEbar \subset \weightedPP
\]
compatible with $\weightedEmbedding$ such that the ideal of~$\SEbar$ is
minimally generated by the $4 \times 4$ Pfaffians of
\[
	M =
	\begin{pmatrix}
		0&
		-x_{1}+y_{1}&
		-x_{2}+y_{2}&
		-x_{3}+y_{3}&
		{e_{0}}\\
		x_{1}-y_{1}&
		0&
		x_{3}+y_{3}&
		-x_{2}-y_{2}&
		{e_{1}}\\
		x_{2}-y_{2}&
		-x_{3}-y_{3}&
		0&
		x_{1}+y_{1}&
		{e_{2}}\\
		x_{3}-y_{3}&
		x_{2}+y_{2}&
		-x_{1}-y_{1}&
		0&
		{e_{3}}\\
		-e_{0}&
		-e_{1}&
		-e_{2}&
		-e_{3}&
0
	\end{pmatrix} 
\]
and the quartic equation
\[
 y_1^2+y_2^2+y_3^2 - rh = 0.
\]
Furthermore, the identity element of~$\SE_3$ has coordinates $(1:0:\dots:0)$ in this embedding.
\end{proposition}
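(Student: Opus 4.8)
The plan is to realize $\weightedPP$ explicitly inside the ambient $\PP^{16}$ of $\SEbar$, verify that the six stated polynomials vanish, and then argue that they generate the full saturated prime ideal and do so minimally. As already noted, $\weightedEmbedding$ is the degree-$2$ weighted Veronese embedding of $\weightedPP$ written in adapted coordinates: the nine entries of $h\,\eulerParam(e)$ together with $h=e_0^2+\dots+e_3^2$ form a basis of the $10$-dimensional space of quadratic forms in $e_0,\dots,e_3$ (the diagonal entries together with $h$ recover the four squares, the off-diagonal entries recover the six mixed monomials), and appending the seven weight-$2$ coordinates $x_i,y_i,r$ realizes all $17$ weighted monomials of degree $2$ as a linear change of coordinates of $\PP^{16}$. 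Hence $\weightedEmbedding$ is a closed embedding. First I would check that $\phi(\SE_3)$, and so its closure $\SEbar$, lies in $\weightedEmbedding(\weightedPP)$: each $\phi(A,y)$ with $A=\eulerParam(e)$, $x=-A^ty$, $r=\langle y,y\rangle$ is $\weightedEmbedding(e:x:y:r)$. This exhibits $\SEbar$ as a closed subvariety of $\weightedPP$, and one reads off that $(A,y)=(\id,0)$ pulls back to $(1:0:\dots:0)$, which settles the final assertion.

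Next I would show the six generators vanish, for which it suffices to work on the dense subset $\SE_3$ using $A=\eulerParam(e)$, $x=-A^ty$, $r=\langle y,y\rangle$, $h=\sum e_i^2$. Rather than computing the five Pfaffians individually, I would prove the stronger statement that the skew-symmetric matrix $M$ has rank at most $2$ along $\SE_3$; since all five $4\times4$ Pfaffians of a $5\times5$ skew-symmetric matrix vanish exactly when its rank is $\le 2$, this disposes of them at once. The rank bound encodes two geometric facts: that the quaternion axis is fixed, i.e. $\eulerParam(e)\,(e_1,e_2,e_3)^t=(e_1,e_2,e_3)^t$, and the translation relation $x=-A^ty$; combining these exhibits the columns of $M$ as spanning a $2$-dimensional space (a sample check, e.g. for the half-turn $e=(0:0:0:1)$, already displays this rank drop). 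For the quartic, orthogonality of $A$ gives $\langle x,x\rangle=\langle A^ty,A^ty\rangle=\langle y,y\rangle$, while $r=\langle y,y\rangle$ with $h=1$ homogenizes to $y_1^2+y_2^2+y_3^2-rh=0$; as a by-product the degree-$4$ Pfaffian obtained by deleting the last row and column equals $\langle y,y\rangle-\langle x,x\rangle$ and vanishes for the same reason. Thus the ideal $J$ generated by the six polynomials satisfies $J\subseteq I_{\SEbar}$.

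It then remains to show $J=I_{\SEbar}$ and that the generators are minimal. The five Pfaffians of $M$ define, by the Buchsbaum--Eisenbud structure theorem, an arithmetically Gorenstein subscheme of codimension $3$ once one confirms that the locus $\rank M\le 2$ has the expected codimension $3$ in $\weightedPP$. Cutting with the quartic, which I would verify is a nonzerodivisor on that Gorenstein quotient, yields a Cohen--Macaulay, indeed Gorenstein, quotient of codimension $4$, i.e. dimension $6$. Since the resulting scheme contains the irreducible $6$-fold $\SEbar$ and has dimension $6$, it equals $\SEbar$ as a set; checking generic reducedness then forces $J=I_{\SEbar}$ as saturated ideals in the weighted coordinate ring. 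Minimality follows by a graded Nakayama count: $\SEbar$ is nondegenerate so $I_{\SEbar}$ has no elements of weighted degree $\le 2$, the four degree-$3$ Pfaffians are independent modulo $\mathfrak m\,I_{\SEbar}$, and in weighted degree $4$ the two remaining generators (the degree-$4$ Pfaffian and the quartic) are not contained in the span of the products $e_i\cdot(\text{degree-}3\text{ Pfaffian})$.

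The genuinely delicate points are the primeness (equivalently saturatedness and radicality) of $J$, the codimension count for the Pfaffian locus, and the minimality verification in weighted degree $4$; these are finite but not short linear-algebra checks, and I expect to discharge them with the computer-algebra computations cited in the paper (over $\QQ$, or over a finite field together with the semicontinuity arguments from the Appendix), at the same time reconfirming $\dim\SEbar=6$ and $\deg\SEbar=40$. The clean conceptual inputs---that $\weightedEmbedding$ is the $2$-uple embedding, that $M$ drops to rank $2$ along $\SE_3$, and that the quartic records $r=\langle y,y\rangle$---reduce everything else to this bookkeeping.
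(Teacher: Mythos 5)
Your plan is correct in substance but organized quite differently from the paper's proof, which is deliberately minimal: the authors observe only that $\SEbar$ lies in the image of $\weightedEmbedding$ because $\eulerParam$ parametrizes all of $\SO_3$, and then obtain the six generators in one stroke by pulling back the known generators of $I_{\SEbar} \subset \PP^{16}$ along $\weightedEmbedding$ and saturating, certifying everything by a single computer algebra computation. You instead propose the six polynomials as a candidate ideal $J$, verify $J \subseteq I_{\SEbar}$ conceptually (the relation $x = -A^t y$ yields the four cubic Pfaffians --- in quaternion language these are exactly the components of $ex + ye = 0$ --- while orthogonality of $A$ gives $\langle x, x\rangle = \langle y, y \rangle$, hence both the degree-$4$ Pfaffian and the quartic), and then force equality via the Buchsbaum--Eisenbud structure theorem and a codimension count. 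This buys real insight the paper's proof does not offer --- it explains a priori why $\SEbar$ is arithmetically Gorenstein of codimension $4$ rather than discovering it post hoc --- but the computational burden ends up comparable, since the grade of the Pfaffian ideal, the quartic being a nonzerodivisor, primeness of $J$, and the degree-$4$ minimality count all go back to the computer. One step is stated too quickly: Cohen--Macaulayness gives that $V(J)$ is pure of dimension $6$, but purity together with containing the irreducible $6$-fold $\SEbar$ does not by itself yield set-theoretic equality, since further $6$-dimensional components are not excluded; so the primeness (or irreducibility) verification you list among the ``delicate points'' is not an optional refinement but the load-bearing step, playing the same role that saturation plays in the paper's computation. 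With that check discharged your argument closes, and your graded Nakayama argument for minimality (no elements of weighted degree $\le 2$ by nondegeneracy of $\SEbar \subset \PP^{16}$, hence the four cubics are automatically minimal, plus a finite linear-algebra check in degree $4$) is correct.
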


\begin{proof}
The image of~$\weightedEmbedding$ contains~$\SEbar$ since the image of~$\eulerParam$ is 
the set of orthogonal matrices. We obtain the generators of the ideal of $\SEbar \subset \weightedPP$
by pulling back the generators of $\SEbar \subset \PP^{16}$ and saturating the resulting ideal. 
This can be calculated by a computer algebra system.
\end{proof}

\begin{remark}
Notice that the above equations have integer coefficients. They therefore make sense over any field~$K$. 
Over such a field~$K$ we denote by~$\SEbar$ the variety defined by the above equations.
\end{remark}

\begin{definition}
Let $\eulerProjection \colon \weightedPP \dashrightarrow \PP^3$ be the rational morphism 
obtained by projecting from the codimension~$4$ subspace $\{ e_0 = e_1 = e_2 = e_3 = 0 \} \subset \weightedPP$. 
We denote the restriction of this rational morphism to~$\SEbar$ by the same letter.
\end{definition}

\section{Hexapods} \label{sHexapods}

\noindent
Let us formally define what a mobile $n$-pod is:

\begin{definition}
An \emph{$n$-pod} is a pair of $n$-tuples $(P_1,\dotsc,P_n)$ and $(Q_1,\dots,Q_n)$ in~$\RR^3$. 
Consider the corresponding leg-equations $l_i := l_{P_i,Q_i}$ from Equation~\eqref{eqSpherical} 
and the linear space $L := V(l_1,\dotsc,l_n) \subset \PP^{16}$ where all leg-equations vanish. 
The pod is called \emph{mobile} if the intersection~$\SEbar \cap L$ contains a curve~$D$ that passes through the identity element of~$\SE_3$.
\end{definition}

\begin{remark}
 Notice that the notion of mobility we have just introduced does not require the curve~$D$ to be constituted of a one-dimensional set of real points; 
 one might say that this is a notion of ``complex mobility''.
\end{remark}

We prefer a slightly more intrinsic view of this situation.

\begin{definition}
A curve $D \subset \SEbar$ is called an $n$-pod curve, if
\begin{enumerate}
\item $D$ passes through the identity element of~$\SE_3$,
\item the space of linear equations in~$I_D$ contains $n$ leg-equations.
\end{enumerate}
\end{definition}

\begin{remark}
For each $n$-pod curve one can obtain several mobile $m$-pods with $m<n$ by choosing~$m$ of
the possible~$n$ leg-equations.
\end{remark}

Let us describe the set of leg-equations geometrically:

\begin{proposition}
We interpret a leg-equation~$l_{P,Q}$ as a point in the dual space~$\PPbondDual$. The set of 
all leg-equations is then contained in the $\PP^3 \times \PP^3 \stackrel{\mathrm{Segre}}{\hookrightarrow} \PP^{15} \subset \PPbondDual$
defined by the $2 \times 2$-minors of the matrix
\[
	\begin{pmatrix}
		A^* & y^* \\
		(x^*)^t & -2r^*
	\end{pmatrix}
\]
and the linear equation
\[
	a^*_{1,1}+a^*_{2,2}+a^*_{3,3} + h^* = 0
\]
where $a_{i,j}^*$, $x_i^*$, $y_i^*$, $r^*$, and~$h^*$ are the dual coordinates in~$\PPbondDual$.
\end{proposition}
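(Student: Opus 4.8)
The plan is to exhibit each leg-equation $l_{P,Q}$ explicitly as a linear form on $\PP^{16}$, read off its coefficient vector as a point of $\PPbondDual$, and then verify directly that this point satisfies both the stated $2\times 2$ minor conditions and the single linear relation. Recall from Proposition~\ref{pLegEquations} that, after expanding the matrix product, the leg-equation reads
\[
	l_{P,Q} = \langle x, P \rangle + \langle Q, AP \rangle + \langle Q, y \rangle - \tfrac{1}{2} r - \langle Q, P \rangle h,
\]
where the coordinates of $\PP^{16}$ are the entries $a_{i,j}$, together with $x_i$, $y_i$, $r$, $h$. First I would collect the coefficient of each of these $17$ coordinates: the coefficient of $a_{i,j}$ is $q_i p_j$, the coefficient of $x_j$ is $p_j$, the coefficient of $y_i$ is $q_i$, the coefficient of $r$ is $-\tfrac{1}{2}$, and the coefficient of $h$ is $-\langle Q,P\rangle = -\sum_i q_i p_i$. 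These $17$ numbers are by definition the dual coordinates of the point of $\PPbondDual$ representing $l_{P,Q}$; writing them as $a^*_{i,j}$, $x^*_j$, $y^*_i$, $r^*$, $h^*$ gives precisely
\[
	a^*_{i,j} = q_i p_j, \quad y^*_i = q_i, \quad (x^*)_j = p_j, \quad r^* = -\tfrac{1}{2}, \quad h^* = -\langle Q,P\rangle.
\]

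The second step is to recognize this coefficient vector as the Segre product of the two vectors $(Q,1) = (q_1,q_2,q_3,1)$ and $(P,1) = (p_1,p_2,p_3,1)$, up to the normalization coming from $r^* = -\tfrac12$. Arranging the dual coordinates into the displayed $4\times 4$ matrix
\[
	\begin{pmatrix} A^* & y^* \\ (x^*)^t & -2r^* \end{pmatrix}
	=
	\begin{pmatrix} Q P^t & Q \\ P^t & 1 \end{pmatrix},
\]
one sees immediately that this is the rank-one outer product $(Q,1)^t \cdot (P,1)$, since $-2r^* = 1$ supplies the bottom-right entry and the off-diagonal blocks $Q$ and $P^t$ supply the borders. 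A rank-one matrix has all $2\times 2$ minors equal to zero, which is exactly the vanishing of the $2\times 2$ minors defining the Segre variety $\PP^3 \times \PP^3 \hookrightarrow \PP^{15}$. Thus every leg-equation point lies on this Segre variety, with the two projective factors being precisely the (homogenized) platform point $P$ and base point $Q$.

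The third step is the single additional linear relation. Computing the trace of the upper-left block gives $a^*_{1,1}+a^*_{2,2}+a^*_{3,3} = \sum_i q_i p_i = \langle Q,P\rangle$, and since $h^* = -\langle Q,P\rangle$ we obtain $a^*_{1,1}+a^*_{2,2}+a^*_{3,3} + h^* = 0$, which is the asserted linear equation. This confirms that the set of leg-equations lies in the intersection of the Segre variety with this hyperplane inside $\PPbondDual$. I do not expect any genuine obstacle here: the entire argument is a direct transcription of the expanded leg-equation into dual coordinates followed by the elementary observation that a bordered outer product is rank one. The only point requiring a little care is bookkeeping the correct sign and factor on the $r$-coordinate (the $-2r^*$ versus $-\tfrac12 r$ discrepancy), and keeping the roles of $P$ and $Q$ consistent with the row/column convention of the Segre matrix; once the $r$-normalization is fixed so that the corner entry is $1$, the rank-one structure and hence all claims follow.
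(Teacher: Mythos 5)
Your proposal is correct and takes essentially the same route as the paper's proof: both recognize, via Proposition~\ref{pLegEquations}, that the coefficient matrix $\bigl(\begin{smallmatrix} A^* & y^* \\ (x^*)^t & -2r^* \end{smallmatrix}\bigr)$ factors as the rank-one product $\bigl(\begin{smallmatrix} Q \\ 1 \end{smallmatrix}\bigr)\bigl(\begin{smallmatrix} P^t & 1 \end{smallmatrix}\bigr)$, so all $2\times 2$ minors vanish, and both obtain the linear relation from $a^*_{i,i} = q_i p_i$ together with $h^* = -\langle Q, P \rangle$. Your write-up simply makes explicit the coefficient bookkeeping (including the $r^* = -\tfrac12$ normalization) that the paper leaves implicit.
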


\begin{proof}
By Proposition~\ref{pLegEquations} the coefficient matrix above is 
\[
	\begin{pmatrix}
		A^* & y^* \\
		(x^*)^t & -2r^*
	\end{pmatrix} = 
	\begin{pmatrix} Q \\ 1 \end{pmatrix} 
	\begin{pmatrix} P^t & 1 \end{pmatrix} .
\]
It has rank~$1$ and therefore the $2 \times 2$-minors of this matrix vanish.
Again by Proposition~\ref{pLegEquations}, the coefficient $h^*$ of~$h$ in a leg-equation
$l_{P,Q}$ is $-P_1Q_1-P_2Q_2-P_3Q_3$. Since $a^{*}_{i,i} = P_iQ_i$ for all~$i$, we get the linear equation.
\end{proof}
	
\begin{remark}
The linear equation $a^*_{1,1} + a^*_{2,2} + a^*_{3,3} + h^* = 0$ just says that the locus 
$\{ l_{P,Q} = 0 \}$ contains the identity element of~$\SE_3$.
\end{remark}	

Almost all points on the $\PP^3 \times \PP^3 \subset \PPbondDual$ come from leg-equations:

\begin{definition}
Consider the $\PP^3 \times \PP^3$ defined above. 
We call
\[
	\PP^3 \times \PP^3 \cap \{r^*=0\}
\]
the \emph{border} and
\[
	\PP^3 \times \PP^3 \cap \{r^*\not =0\}
\]
the \emph{interior} of~$\PP^3 \times \PP^3$.
\end{definition}

\begin{remark}
The leg-equations of Proposition~\ref{pLegEquations} are precisely those 
corresponding to points in the interior of~$\PP^3 \times \PP^3$.
\end{remark}

To be able to use the methods of algebraic geometry in the classification of hexapods, 
we slightly weaken our definition of $n$-pod curves:

\begin{definition}
Let $D \subset \SEbar \subset \PP^{16}$ be a curve on~$\SEbar$ defined over~$\CC$ 
and passing through the identity element of~$\SE_3$. 
We denote by 
\[
	\langle D \rangle \subset \PP^{16}
\]
the \emph{linear span} of~$D$, i.e., the smallest linear subspace of~$\PP^{16}$ containing~$D$. 
We denote by
\[
	\langle D \rangle^\perp := \PP H^0 \bigl( I_D(1) \bigr) \subset \PPbondDual
\]
the \emph{space of linear equations containing~$D$}.

We call $D$ a \emph{generalized $n$-pod curve} if
\[
	\langle D \rangle^\perp \cap (\PP^3 \times \PP^3) \subset \PPbondDual
\]
contains a scheme of length~$n$. 
\end{definition}

\begin{remark}
All $n$-pod curves are generalized $m$-pod curves for some $m \ge n$.
Conversely, a generalized $m$-pod curve $D$ is an ordinary $n$-pod curve if 
\[
	\langle D \rangle^\perp \cap (\PP^3 \times \PP^3)
\]
contains $n$ reduced real points in the interior of~$\PP^3 \times \PP^3$.
\end{remark}

The existence of generalized $n$-pod curves can often be guaranteed by a dimension count:

\begin{proposition} \label{pDimensionCount}
Consider an algebraic family of curves $\sD$ in~$\SEbar$, each containing the identity element of~$\SE_3$, 
and parametrized by an irreducible projective algebraic variety~$B$, i.e., a diagram
\xycenter{ 
	\id \ar[r] & D_b \ar[d] \ar[r]& \sD \ar[d] \ar[r] & \SEbar \ar@{ (->}[r] & \PP^{16} \\
	& b \ar[r] & B 	
}
where $D_b$ is the fiber over a point $b \in B$. Assume furthermore that
\[
	\codim \langle D_b \rangle = \gamma
\]
and that $\dim \bigl( \langle D_b \rangle \cap \SEbar \bigr) = 1$ for a general~$b \in B$.

\begin{enumerate}
\item \label{ic11} If $\gamma \ge 11$ then all $D_b$ are generalized $\infty$-pod curves. 
\item \label{ic10} If $\gamma = 10$ then all $D_b$ are either generalized $20$-pod curves, or $\infty$-pod curves. 
\item \label{ic6to9} If $6 \le \gamma \le 9$ and $\dim B \ge 6(10-\gamma)$ then the expected dimension
of the subfamily $B' \subset B$ of generalized hexapod curves is $\dim B - 6(10-\gamma)$. 
I.e., if there exists one generalized hexapod curve $D_b$ with $\codim \langle D_b \rangle = \gamma$, 
then there exists at least a $\bigl( \dim B - 6(10-\gamma) \bigr)$-dimensional family of generalized hexapod curves containing~$D_b$. 
\end{enumerate}
\end{proposition}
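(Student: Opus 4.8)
The plan is to reduce all three items to a single picture. Because every curve $D_b$ passes through the identity element of~$\SE_3$, every linear form vanishing on~$D_b$ vanishes at the identity, so $\langle D_b \rangle^\perp$ is contained in the hyperplane $\PP^{15} = \{a^*_{1,1}+a^*_{2,2}+a^*_{3,3}+h^*=0\}$ that also contains the Segre variety $\PP^3 \times \PP^3$. The hypothesis $\codim \langle D_b \rangle = \gamma$ says precisely that $\langle D_b \rangle^\perp$ is a linear subspace of dimension $\gamma-1$ inside this $\PP^{15}$. Thus $S(D_b) = \langle D_b \rangle^\perp \cap (\PP^3 \times \PP^3)$ is the intersection of the fixed $6$-dimensional variety $\PP^3 \times \PP^3$ with a moving $(\gamma-1)$-plane in~$\PP^{15}$, and I would read off the length of this intersection in each regime of~$\gamma$.

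For item~\ref{ic11} I would invoke the projective dimension theorem. Since $\dim \langle D_b \rangle^\perp + \dim(\PP^3 \times \PP^3) = (\gamma-1)+6 = \gamma+5 \geq 16 > 15$, the intersection $S(D_b)$ is non-empty and each of its components has dimension at least $(\gamma-1)+6-15 = \gamma-10 \geq 1$; a positive-dimensional scheme has infinite length, so $D_b$ is a generalized $\infty$-pod curve. For item~\ref{ic10} with $\gamma = 10$ the same theorem gives a non-empty intersection of expected dimension~$0$: if $S(D_b)$ is finite then, the section being proper, its length equals $\deg(\PP^3 \times \PP^3) = \binom{6}{3} = 20$ by B\'ezout, so $D_b$ is a generalized $20$-pod curve, whereas if $S(D_b)$ is positive-dimensional it is again an $\infty$-pod curve.

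The substance is item~\ref{ic6to9}. Over the dense open $B_0 \subset B$ on which $\codim \langle D_b \rangle = \gamma$ is constant, the linear forms vanishing on~$D_b$ assemble into a rank-$\gamma$ subbundle $\mathcal{V} \subset U \otimes \mathcal{O}_{B_0}$, where $U \subset H^0(\mathcal{O}_{\PP^{16}}(1))$ is the $16$-dimensional space of forms vanishing at the identity; set $\mathcal{Q} := (U \otimes \mathcal{O}_{B_0})/\mathcal{V}$, of rank $16-\gamma$. For a point $p \in \PP^3 \times \PP^3$ with line $\ell_p \subset U$, the condition $p \in \langle D_b \rangle^\perp$ is that $\ell_p$ maps to~$0$ in~$\mathcal{Q}_b$, so the tautological inclusion $\mathcal{O}(-1) \hookrightarrow U \otimes \mathcal{O}$ followed by the quotient defines a section of $\mathrm{pr}_B^* \mathcal{Q} \otimes \mathrm{pr}^* \mathcal{O}(1)$ over $B_0 \times (\PP^3 \times \PP^3)$ whose zero locus is the incidence variety $\mathcal{I} = \{(b,p) : p \in S(D_b)\}$. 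To encode six legs I would form the six-fold fibre product $\mathcal{I}^{\times_B 6} \subset B_0 \times (\PP^3 \times \PP^3)^6$, which is the zero locus of a section of a bundle of rank $6(16-\gamma)$ over a space of dimension $\dim B + 36$. Since the vanishing locus of a section of a rank-$r$ bundle has every component through each of its points of codimension at most~$r$, and since the assumed generalized hexapod curve $D_{b_0}$ supplies (in the reduced case) six distinct points of $S(D_{b_0})$ and hence a point of $\mathcal{I}^{\times_B 6}$, there is a component~$Z$ through that point with $\dim Z \geq \dim B + 36 - 6(16-\gamma) = \dim B - 6(10-\gamma)$; projecting~$Z$ to~$B$ lands inside~$B'$ and yields the stated lower bound.

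The main obstacle will be controlling this final projection $Z \to B$: its fibre over~$b$ consists of ordered six-tuples of points of~$S(D_b)$, so the projection drops dimension exactly when $S(D_b)$ is positive-dimensional. I would resolve this by restricting to the open locus of~$Z$ on which the six points are distinct (which contains the point coming from~$D_{b_0}$), so that the image genuinely lies in~$B'$, and then splitting into two cases: where the generic fibre is finite the image has dimension $\geq \dim Z$ and we are done, while where the generic fibre is positive-dimensional the relevant $D_b$ are $\infty$-pod curves, for which an analogous but cheaper incidence count—one already obtains infinitely many legs from a single intersection point—produces an even larger family. Secondary care is needed to check that $\mathcal{V}$ is a genuine subbundle on~$B_0$ (guaranteed by the constancy of~$\gamma$) and that the length-equals-degree identity used in item~\ref{ic10} relies on the properness of the $0$-dimensional linear section.
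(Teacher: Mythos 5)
Your treatment of items (1) and (2) coincides with the paper's own: both pass to $\langle D_b \rangle^\perp$ inside the $\PP^{15}$ cut out by $a^*_{1,1}+a^*_{2,2}+a^*_{3,3}+h^*=0$, note $\codim \langle D_b\rangle^\perp = 16-\gamma$ there, apply the projective dimension theorem, and in the boundary case $\gamma=10$ identify the length of a finite (hence proper) linear section with $\deg(\PP^3\times\PP^3)=20$. For item (3) you take a genuinely different route. The paper maps $B_\gamma$ into the Grassmannian $\GG(\gamma,16)$ of $\PP^{\gamma-1}$'s in~$\PP^{15}$ via $b \mapsto \langle D_b\rangle^\perp$, builds the incidence variety $Y_6$ of six distinct points of $\PP^3\times\PP^3$ spanning a~$\PP^5$ together with a $\PP^{\gamma-1}$ containing them, computes the Schubert fibers $\GG(\gamma-7,9)$ to obtain $\codim_\GG Y_\GG = 6(10-\gamma)$, and then intersects $Y_\GG$ with the image of~$B$ inside the smooth Grassmannian, where the dimension bound for components of intersections holds. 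You instead realize the incidence correspondence $\mathcal{I}^{\times_B 6}$ directly as the zero locus of a section of a rank-$6(16-\gamma)$ vector bundle on $B_0 \times (\PP^3\times\PP^3)^6$ and invoke the local principle that every component of such a zero locus has codimension at most the rank. The two counts produce the same number in different clothing, since $\dim B + 36 - 6(16-\gamma) = \dim B - 6(10-\gamma)$, but your version dispenses with Schubert calculus and with the auxiliary spanning condition defining the paper's~$X_6$, needing only that the six points of $S(D_{b_0})$ be distinct so that your base point lies in the correct open locus; the paper's version, in exchange, keeps the whole argument inside one fixed ambient Grassmannian, where irreducibility of~$Y_6$ and generic finiteness of~$\pi_2$ are transparent.

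The one soft spot is the final projection $Z \to B$, and it is worth noting that the paper faces the mirror-image difficulty (there it is the assertion that $\delta$ is generically $1:1$ on the relevant component, i.e., that the component of the Grassmannian intersection is generically in the image of~$\delta$). Your case split is the right move, but as written the fallback branch (``an analogous but cheaper incidence count produces an even larger family'') is not yet an argument: the cheaper count on~$\mathcal{I}$ bounds the dimension of the incidence variety, not of its image in~$B$, so the same projection problem simply recurs one level down. Observe, however, that you do not need the fallback whenever $S(D_{b_0})$ is finite: the fiber of $Z \to B$ over~$b_0$ is then finite, and upper semicontinuity of fiber dimension (the fibers being closed in the projective $(\PP^3\times\PP^3)^6$) forces the generic fiber of~$Z$ over its image to be finite as well, so your finite-fiber branch automatically applies. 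The only residual case is a starting curve~$D_{b_0}$ with positive-dimensional $S(D_{b_0})$, a degenerate situation which the paper's own proof likewise treats only implicitly; with that caveat made explicit, your argument is sound and delivers exactly the bound $\dim B - 6(10-\gamma)$ of the statement.
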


\begin{proof}
First notice that by semicontinuity we have
\[
	\codim \langle D_b \rangle \ge \gamma
\]
for all~$b$ in~$B$. We consider the open subvariety
\[
	B_\gamma = \{ b \in B \,|\, \codim \langle D_b \rangle = \gamma \} \subset B.
\]
Notice furthermore that
\[
	\dim \langle D_b \rangle^\perp = \codim \langle D_b \rangle - 1 = \gamma - 1.
\]
Since $D_b$ contains the identity element, we have $\langle D \rangle^\perp \subset \PP^{15} \subset \PPbondDual$
with~$\PP^{15}$ defined by the equation 
\[
	a^*_{1,1}+a^*_{2,2}+a^*_{3,3} + h^* = 0.
\]
Therefore
\[
	\codim \langle D_b \rangle^\perp = 15-\gamma+1 = 16-\gamma
\]
in this~$\PP^{15}$.

In case (\ref{ic11}), we have
\[
	 \codim \langle D_b \rangle^\perp = 16 - \gamma \le 5.
\]
Therefore $\langle D_b \rangle^\perp$ must intersect $\PP^3 \times \PP^3$ in at least a curve. 
Consequently, $D_b$ is a generalized $\infty$-pod curve.

In case (\ref{ic10}), we have $\codim \langle D_b \rangle^\perp = 6$ and
\[
	\dim \left( \langle D_b \rangle^\perp \cap \PP^3 \times \PP^3 \right) \ge 0
\]
If the intersection is finite, the length of the intersection is equal to the degree of $\PP^3 \times \PP^3 \subset \PP^{15}$, which is $20$. 
Otherwise, we again have generalized $\infty$-pod curves.

For case (\ref{ic6to9}), let $\GG := \GG(\gamma,16)$ be the Grassmannian parametrizing $\PP^{\gamma-1}$'s in~$\PP^{15}$. 
We then have a natural morphism
\begin{align*}
	\delta \colon B_\gamma &\to \GG \\
			      b &\mapsto \langle D_b \rangle^\perp
\end{align*}
and we define $B_\GG := \overline{\delta(B_\gamma)} \subset \GG$ as the closure of the image of~$B_\gamma$ in~$\GG$. 
The set~$B_\GG$ is an irreducible variety with $\dim B_\GG = \dim B$ because $\langle D_b \rangle \cap \SEbar$ is of dimension~$1$ for a general $b \in B$. 
Points in~$B_\GG$ correspond to~$\PP^{\gamma-1}$'s contained in some $\langle D_b\rangle^\perp$. 
For curves $D_b$ with $\dim \langle D_b \rangle^\perp > \gamma-1$ only those subspaces appear, that
are limits of subspaces occurring for curves~$D_{b'}$ with $\dim \langle D_{b'} \rangle^\perp = \gamma-1$.

We now look at those $\PP^{\gamma-1} \subset \PP^{15}$ that intersect the Segre embedding of~$\PP^3 \times \PP^3$ in at least $6$ points.
More precisely, we let
\[
	X_6 := \bigl\{ (l_1, \dotsc, l_6) \in (\PP^3 \times \PP^3)^6 \,\bigr|\, \langle l_1, \dotsc, l_6 \rangle = \PP^5 \bigr\}
\]
be the set of $6$-tuples of distinct points in~$\PP^3 \times \PP^3$ that span a~$\PP^5$. 
Since the latter condition is true for a general choice of such points, we have
\[
	\dim X_6 = 6(3+3) = 36
\]
and $X_6$ is irreducible. Now consider the following incidence variety
\[
	Y_6 = \{ (l_1, \dotsc, l_6, L) \,|\, \langle l_1, \dotsc, l_6 \rangle \subset L \} \subset X_6 \times \GG
\]
with its natural projections
 \xycenter{
	& Y_6 \ar[dl]_{\pi_1} \ar[dr]^{\pi_2}\\
	X_6 && \GG.
}

A fiber $\pi_1^{-1}(l_1,\dots,l_6)$ is the set of all $\PP^{\gamma-1}\subset \PP^{15}$ containing the $\PP^5$ spanned by the~$\{l_i\}_{i=1}^{6}$. 
This is a Schubert variety in~$\GG$. 
Projecting from the~$\PP^5$, hence landing on a~$\PP^9$, maps each~$\PP^{\gamma-1}$ containing the~$\PP^5$ 
to a $\PP^{\gamma-7} \subset \PP^9$, and the correspondence is~$1:1$. 
Therefore
\begin{align*}
	\dim \pi_1^{-1}(l_1,\dots,l_6) &= \dim \GG(\gamma-7,9) \\
	&= (\gamma-7+1)\bigl(10-(\gamma-7+1)\bigr) = (\gamma-6)(16-\gamma) \,.
\end{align*}
Since the fibers are of constant dimension and irreducible, the incidence variety~$Y_6$ is also irreducible with
\[
	\dim Y_6 = (\gamma-6)(16-\gamma) + 36 \,.
\]
We set $Y_\GG := \overline{\pi_2(Y_6)} \subset \GG$. Now $Y_6$ is irreducible and there exist linear spaces $\PP^{\gamma-1} \subset \PP^{15}$ that intersect $\PP^3 \times \PP^3$ in exactly $6$ distinct, linearly independent points.
Therefore $\pi_2$ is generically finite. It follows that
\[
	\dim Y_\GG = \dim Y_6 = (\gamma-6)(16-\gamma) + 36 \,.
\]
Now $\dim \GG = \gamma(16-\gamma)$ and we have
\begin{align*}
	\codim_\GG Y_\GG 
	&= \gamma(16-\gamma)-(\gamma-6)(16-\gamma) - 36 \\
	&= 6(16-\gamma) - 36 \\
	&= 6(10-\gamma) \,.
\end{align*}
We now consider the intersection $Z_\GG := B_\GG \cap Y_\GG \subset \GG$, 
and its decomposition into irreducible components
\[
	Z_\GG = Z_{\GG,1} \cup \dots \cup Z_{\GG,k}
\]
for some $k \in \NN$. 
By the dimension count above, the expected dimension of each~$Z_{\GG,i}$ is at least
\[
	\dim B_\GG - \codim Y_\GG = \dim B - 6(10-\gamma).
\]
Let $Z_i := \delta^{-1}(Z_{\GG,i}) \subset B$ be the preimage of~$Z_i$ in~$B$.
By construction, the elements $b \in Z_i$ parametrize generalized $n$-pod curves $D_b$
with $n\ge 6$. If a component~$Z_i$ contains at least one~$b$ such that 
$\dim \langle D_b \rangle = \gamma$, then $Z_i$ is non-empty, $\delta$ is generically~$1:1$ on~$Z_i$, and
\[
	\dim Z_i \ge \dim B - 6(10-\gamma). \qedhere
\]
\end{proof}

\section{The border of \texorpdfstring{$\SEbar$}{the closure of SE3}} \label{sInfinity}

In this section we describe the compactification of~$\SE_3$ more precisely,
with particular attention to the points in~$\SEbar$ that do not come from isometries in~$\SE_3$.

\begin{definition} \label{dSEinfty}
Denote by $\SEinfty := \SEbar \backslash \SE_3$ the \emph{border of~$\SEbar$}. Similarly,
denote by $\EulerInfty := \{e_0^2+e_1^2+e_2^2+e_3^2=0\}$ the \emph{border of Euler's~$\PP^3$}.
\end{definition}

\begin{definition} \label{dContact}
Let $\PP$ be a (possibly weighted) projective space, $X \subset \PP$ an irreducible variety, and $T \subset \PP$ a hypersurface. 
Let $Z := X \cap T$ be the set-theoretic intersection of~$T$ and~$X$ with the reduced scheme structure. 
The hypersurface~$T$ is called a \emph{contact hypersurface} of~$X$ if $T$ is tangent to~$X$ at each smooth point of~$Z$. 
In this case $Z$ is called a \emph{contact divisor}.
\end{definition} 

\begin{proposition} \label{pContactH}
Let $T \subset \PP^{16}$ and $T' \subset \weightedPP$ be the hypersurfaces defined by $h=0$ and $e_0^2+e_1^2+e_2^2+e_3^2=0$, respectively. 
Then $T$ is contact to $\SEbar \subset \PP^{16}$ and $T'$ is contact to $\SEbar \subset \weightedPP$. 
In both cases, we have that $\SEinfty$ is the contact divisor.
Furthermore, the ideal of $\SEinfty \subset \weightedPP$ is generated by
\begin{enumerate}
\item $e_0^2+e_1^2+e_2^2+e_3^2$,
\item the $4 \times 4$ Pfaffians of
\[
	\begin{pmatrix}
		0&
		x_{1}&
		x_{2}&
		x_{3}&
		e_{0}\\
		{-x_{1}}&
		0&
		{-x_{3}}&
		x_{2}&
		e_{1}\\
		{-x_{2}}&
		x_{2}&
		0&
		{-x_{1}}&
		e_{2}\\
		{-x_{3}}&
		{-x_{2}}&
		x_{1}&
		0&
		e_{3}\\
		{-e_{0}}&
		{-e_{1}}&
		{-e_{2}}&
		{-e_{3}}&
		0\\
	\end{pmatrix},
\]
\item the $4 \times 4$ Pfaffians of
\[
	\begin{pmatrix}
		0&
		y_{1}&
		y_{2}&
		y_{3}&
		e_{0}\\
		{-y_{1}}&
		0&
		y_{3}&
		{-y_{2}}&
		e_{1}\\
		{-y_{2}}&
		{-y_{3}}&
		0&
		y_{1}&
		e_{2}\\
		{-y_{3}}&
		y_{2}&
		{-y_{1}}&
		0&
		e_{3}\\
		{-e_{0}}&
		{-e_{1}}&
		{-e_{2}}&
		{-e_{3}}&
		0\\
	\end{pmatrix}.
\]
\end{enumerate}
Notice that the degree $4$ Pfaffians of the skew-symmetric matrices above are $x_1^2+x_2^2+x_3^2$ and $y_1^2+y_2^2+y_3^2$, respectively.
\end{proposition}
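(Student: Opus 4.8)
The plan is to prove everything in the weighted model $\SEbar \subset \weightedPP$ and then transfer the contact statement for $T$ to $\PP^{16}$ for free. The embedding $\weightedEmbedding$ identifies $\SEbar \subset \weightedPP$ isomorphically with $\SEbar \subset \PP^{16}$ and pulls back the linear coordinate $h$ on $\PP^{16}$ to the weight-$2$ function $h = e_0^2+e_1^2+e_2^2+e_3^2$; hence $T = \{h=0\}$ pulls back to $T' = \{h=0\}$. Since tangency is intrinsic to the pair (variety, function), $T'$ being contact to $\SEbar \subset \weightedPP$ immediately gives that $T$ is contact to $\SEbar \subset \PP^{16}$, with the same contact locus. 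Throughout I use that $\SEinfty = \SEbar \setminus \SE_3$ equals $\SEbar \cap \{h=0\}$ set-theoretically, because $\SE_3$ is exactly the chart $h\neq 0$; in particular $\SEinfty$ is automatically the reduced intersection, which is what the definition of contact divisor requires.

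First I would pin down the ideal of $\SEinfty$. The structural observation is that on its upper-left $4\times 4$ block the matrix $M$ of Proposition~\ref{pEquationsWeightedPP} is the difference of the two matrices $M_y$ and $M_x$ of the statement, while all three share the same $e$-border; since for $k\le 4$ every term of the Pfaffian $P_k$ pairs a (shared) border entry with a (differenced) block entry, one gets $P_k = B_k - A_k$ for all $k$, where $A_k,B_k$ denote the Pfaffians of $M_x,M_y$. Here $A_5 = x_1^2+x_2^2+x_3^2$ and $B_5 = y_1^2+y_2^2+y_3^2$, while $B_1,\dots,B_4$ are exactly the components of the quaternion product of $e=(e_0,e_1,e_2,e_3)$ with the pure quaternion $(y_1,y_2,y_3)$, so the norm identity $\sum_{k=1}^4 B_k^2 = h\,(y_1^2+y_2^2+y_3^2)$ holds, and similarly $\sum_{k=1}^4 A_k^2 = h\,(x_1^2+x_2^2+x_3^2)$. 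A short computation then shows that the relations $P_k=0$ together with the null condition $h=0$ force every $A_k$ and $B_k$ to vanish on $\SEinfty$, and I would confirm that they generate the full saturated ideal by checking $\sat\bigl(I_{\SEbar}+(h)\bigr) = (h, A_1,\dots,A_5,B_1,\dots,B_5)$ with a computer algebra system, the quaternionic description above explaining why the answer takes this shape.

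Second, and this is the heart of the matter, I would establish the contact property, namely that $h|_{\SEbar}$ vanishes to order $\ge 2$ along $\SEinfty$ rather than transversally. Equivalently, $dh$ must lie in the conormal space of $\SEbar$ at a general point of $\SEinfty$, i.e.\ $dh$ is a combination of the differentials of $P_1,\dots,P_5$ and of the quartic $Q = y_1^2+y_2^2+y_3^2 - rh$. Concretely I would take a general point of $\SEinfty$ (a null $e$ together with $x,y$ solving the vanishing of all $A_k,B_k$ and a free $r$), write out the six differentials $dP_k,dQ$, and solve the resulting small linear system for the coefficients expressing $dh$; it succeeds wherever $\SEbar$ is smooth. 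Dually this is the ideal-membership statement $h \in I_{\SEbar} + I_{\SEinfty}^{(2)}$, which can likewise be verified by computer algebra. Because tangency is a closed condition and $\SEinfty$ is irreducible, verifying it at the generic smooth point yields contact along the whole divisor, and the same conormal identity (with $h$ read as the coordinate of $\PP^{16}$) gives the contact statement for $T$.

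The main obstacle is precisely the order of vanishing: the easy relations only give order $1$. Indeed $y_1^2+y_2^2+y_3^2 = rh$ on $\SEbar$, so the norm identity yields $rh^2 = \sum_{k=1}^4 B_k^2 \in I_{\SEbar}+I_{\SEinfty}^2$; dividing out the unit $r$ only gives $h \in I_{\SEinfty}$, not $h \in I_{\SEinfty}^{(2)}$. The genuine order-$2$ statement therefore has to be extracted from the full Jacobian of the Pfaffian equations, and the delicate point is that the relevant $3\times 3$ determinant of the differentials $dP_k,dQ$ comes out as a nonzero multiple of $(st-r)$ in the natural parameters $s,t,r$ of $\SEinfty$. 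Its vanishing locus is exactly where the rank of that Jacobian drops, hence where $\SEbar$ is singular, so it never meets the smooth part of $\SEinfty$; confirming that $\{st=r\}\cap\SEinfty \subset \sing \SEbar$ and is thus excluded by the definition of contact is the one step I would treat most carefully.
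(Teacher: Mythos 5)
Your overall strategy is the same as the paper's: push everything through the immersion $\weightedEmbedding$, reduce both claims to ideal-theoretic identities in $\weightedPP$, and discharge them by computer algebra. But two of your concrete verifications are wrong as stated. The more serious one is the identity you propose for the ideal of the border, $\sat\bigl(I_{\SEbar}+(h)\bigr) = (h, A_1,\dots,A_5,B_1,\dots,B_5)$, which is false --- and false precisely \emph{because} of the contact property you are proving. Contact forces the scheme-theoretic section $V\bigl(I_{\SEbar}+(h)\bigr)$ to be generically non-reduced along $\SEinfty$, so its saturated ideal is strictly contained in the radical ideal $I$ on the right-hand side: if all the $A_k, B_k$ lay in that saturation, it would contain (hence equal) $I$, the section would be the reduced $\SEinfty$, and $h$ would cut $\SEbar$ transversally at a general border point, contradicting the tangency you establish afterwards. (Toy model: for $X = V(y-x^2)$ and $T = \{y=0\}$, the intersection ideal is $(x^2,y)$, not $(x,y)$.) To get the reduced ideal you must take a radical somewhere; this is exactly what the paper's first check, $\sat\bigl(\rad(I)\bigr) = I$ combined with comparing vanishing sets, accomplishes. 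Relatedly, your structural shortcut --- from $\sum_k A_k^2 = h\,(x_1^2+x_2^2+x_3^2)$ and $h=0$ conclude that the $A_k$ vanish --- does not work over $\CC$, where a vanishing sum of squares does not kill the summands; so this part, too, must fall back on the machine computation it was meant to explain.

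For the contact claim itself, your pointwise argument (solve for $dh$ in the span of $dP_1,\dots,dP_5,dQ$ at a general point, then invoke closedness of tangency and irreducibility of $\SEinfty$) only reaches points of $\SEinfty$ in the \emph{smooth locus of $\SEbar$}: the rank condition on the augmented Jacobian is closed only where the rank of the Jacobian of $I_{\SEbar}$ is constant, and it is merely constructible across $\sing\SEbar$. Your closing remark that the locus $\{st=r\}\cap\SEinfty \subset \sing\SEbar$ is ``excluded by the definition of contact'' misreads Definition~\ref{dContact}: the definition exempts singular points of $Z=\SEinfty$, not singular points of $X=\SEbar$, so you would additionally need $\sing\SEbar \cap \SEinfty \subseteq \sing\SEinfty$, which you never establish. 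The clean repair is the global membership you mention only in passing, $h \in I_{\SEbar} + I_{\SEinfty}^{(2)}$: by Zariski--Nagata, elements of the symbolic square vanish to order at least $2$ at \emph{every} point of $\SEinfty$, hence $dh$ annihilates $T_{\SEbar,P}$ at every point of $Z$, smooth or not, giving contact for $T'$ and, via $\weightedEmbedding$, for $T$. This is in substance the paper's second computer check, $\sat\bigl(I^2 + J + (h)\bigr) = J + (h)$ with $J = I_{\SEbar}$ --- but note that setting it up presupposes the correct radical ideal $I_{\SEinfty}$, so the first error must be fixed before this step can even be formulated.
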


\begin{proof}
It follows from Definition~\ref{dBondEmbedding} that, set-theoretically, 
\[
	\SEbar \cap T = \SEinfty \,.
\]
Notice that $\SEbar \subset \weightedPP$ and $T' \subset \weightedPP$ are pullbacks of 
$\SEbar \subset \PP^{16}$ and $T \subset \PP^{16}$ via $\weightedEmbedding \colon \weightedPP \to \PP^{16}$ from Definition~\ref{dWeightedEmbedding}. 
Since $\weightedEmbedding$ is an immersion, it is enough to prove the statement for~$T'$. 

Let $I$ be the ideal described in the proposition. 
Firstly, $I$ is the ideal of a reduced variety if and only if
\[
	\sat \bigl( \rad(I) \bigr) = I \, ,
\]
where $\sat \bigl( \rad(I) \bigr)$ is the saturation of the radical ideal of~$I$. 
Secondly, $T'$ is contact to~$\SEbar$ in~$\SEinfty$ if and only if
\[
	\sat \bigl( I^2 + J+ (e_0^2+e_1^2+e_2^2+e_3^2) \bigr) = J + (e_0^2+e_1^2+e_2^2+e_3^2)
\]
where $J$ is the ideal of $\SEbar \subset \weightedPP$ described in Proposition
\ref{pEquationsWeightedPP}. Both conditions can be easily checked with a computer algebra system.
\end{proof}

\begin{corollary} \label{cContactC}
Let $D \subset \SEbar$ be a curve, and $P \in D \cap \SEinfty$ be a point at the border. 
Let $\pi \colon D \to \PP^3$ be the projection to Euler's~$\PP^3$.
If $\SEbar$ is smooth in~$P$ then $\pi(D)$ is either singular in~$\pi(P)$ or contact to~$\EulerInfty$ in~$\pi(P)$. 
In both cases the intersection multiplicity of~$\pi(D)$ with $\EulerInfty$ is at least~$2$.
\end{corollary}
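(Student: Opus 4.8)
The plan is to compare the vanishing of the quadratic form $f := e_0^2+e_1^2+e_2^2+e_3^2$ along $D$ --- computed inside $\SEbar$ by means of the contact condition of Proposition~\ref{pContactH} --- with its vanishing along $\pi(D)$ inside Euler's $\PP^3$. The crucial observation is that $f$ is simultaneously the equation of $T'\subset\weightedPP$ and of $\EulerInfty\subset\PP^3$, and that $\pi$ is the projection in the coordinates $e_0,\dots,e_3$; hence $\pi^*f=f|_{\SEbar}$ as sections, so $\pi^*\EulerInfty$ is exactly the contact divisor $T'\cap\SEbar$. I would first extract from contact what we need: by Definition~\ref{dContact}, $T'$ is tangent to $\SEbar$ at every smooth point of $\SEinfty$, so at a general point of each component of $\SEinfty$ the restriction $f|_{\SEbar}$ has vanishing differential and therefore vanishes to order at least two. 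In divisorial terms, on the smooth locus of $\SEbar$ the divisor of zeros of $f|_{\SEbar}$ contains $\SEinfty$ with multiplicity at least two.

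Next I would transport this to the curve. Let $\nu\colon\widetilde D\to D$ be the normalization and $\widetilde P$ a point over $P$. Since $\SEbar$ is smooth at $P$, the divisor $\SEinfty$ is Cartier near $P$, and locally $f|_{\SEbar}$ is a unit times a product of the local equations of the branches of $\SEinfty$ through $P$, each raised to a power $\ge 2$. As $D\not\subset\SEinfty$ while $P\in\SEinfty$, restricting to $D$ and pulling back gives
\[
	\operatorname{ord}_{\widetilde P}\bigl(\nu^*(f|_{\SEbar})\bigr)\ \ge\ 2\cdot\operatorname{ord}_{\widetilde P}\bigl(\nu^*\SEinfty\bigr)\ \ge\ 2 .
\]
This is precisely where the hypothesis that $\SEbar$ be smooth at $P$ enters: it makes $\SEinfty$ Cartier at $P$ and lets the multiplicity $\ge 2$ survive restriction, even when $P$ is a singular point of $\SEinfty$ or of $D$.

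Translating back through $\pi^*f=f|_{\SEbar}$, the quantity $\operatorname{ord}_{\widetilde P}\bigl((\pi\circ\nu)^*f\bigr)$ is the local intersection multiplicity at $\pi(P)$ of the branch of $\pi(D)$ determined by $\widetilde P$ with the quadric $\EulerInfty=\{f=0\}$, so this multiplicity is at least $2$; summing over branches yields the last assertion of the corollary. The dichotomy is then formal: $\EulerInfty$ is a smooth quadric, so if $\pi(D)$ is smooth at $\pi(P)$ it has a single unramified branch there, and intersection multiplicity $\ge 2$ with a smooth hypersurface forces the tangent line of $\pi(D)$ into the tangent plane of $\EulerInfty$, i.e.\ $\pi(D)$ is contact to $\EulerInfty$ at $\pi(P)$; otherwise $\pi(D)$ is singular at $\pi(P)$.

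The main obstacle I anticipate is the bookkeeping of the middle step: passing from the multiplicity of $f|_{\SEbar}$ at the generic point of $\SEinfty$ to the order of vanishing of $f|_D$ at the special --- and possibly singular --- point $P$, through the normalization of $D$ and with $\pi$ allowed to be non-immersive at $\widetilde P$. One should also verify that $P$ does not lie on the center $\{e_0=\dots=e_3=0\}$ of $\pi$, so that $\pi(P)$ and the identification $\pi^*f=f|_{\SEbar}$ make sense near $P$; this is implicit in writing $\pi(P)$ in the statement.
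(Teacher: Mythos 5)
Your proof is correct in substance, but it follows a genuinely different route from the paper's. The paper's proof is a three-line infinitesimal argument: since $\SEbar$ is smooth at $P$, Proposition~\ref{pContactH} gives the chain of tangent-space inclusions $T_{D,P} \subset T_{\SEbar,P} \subset T_{T',P}$, and then, assuming $\pi(D)$ is smooth at $\pi(P)$, one pushes forward under $d\pi$ to get $T_{\pi(D),\pi(P)} = \pi(T_{D,P}) \subset T_{\EulerInfty,\pi(P)}$, i.e.\ contact; the quantitative claim about intersection multiplicity is left implicit. You instead run a valuative, divisor-theoretic computation: contact forces $\mathrm{div}\bigl(f|_{\SEbar}\bigr)$ to contain each component of $\SEinfty$ through $P$ with multiplicity at least two (your generic-point computation of the differential is correct, and note that each such component does have its generic point in the smooth locus of~$\SEbar$, since that locus is open and contains~$P$); smoothness of $\SEbar$ at $P$ makes the local ring factorial, so $f|_{\SEbar} = u\prod g_i^{m_i}$ with $m_i \ge 2$, and pulling back to the normalization gives $\operatorname{ord}_{\widetilde P}\bigl(\nu^*(f|_{\SEbar})\bigr) \ge 2$. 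This buys you something the paper's argument does not literally deliver: a direct proof of the multiplicity statement that works even when $P$ is a singular point of $D$ or of $\SEinfty$, and it isolates exactly where the smoothness hypothesis enters (Cartier/UFD structure), matching the paper's cautionary cone example. The paper's version buys brevity and stays entirely at the level of first-order data at~$P$.

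One caveat, which you yourself flag in your final paragraph, deserves to be named precisely: your identification of $\operatorname{ord}_{\widetilde P}\bigl((\pi\circ\nu)^*f\bigr)$ with the intersection multiplicity of the branch of $\pi(D)$ at $\pi(P)$ is valid only when $\pi\circ\nu$ maps $\widetilde D$ birationally onto that branch; in general one has $\operatorname{ord}_{\widetilde P}\bigl((\pi\circ\nu)^*f\bigr) = e\cdot I$ with $e$ the ramification index, so $e \ge 2$ would be compatible with transversal intersection $I = 1$. This is not a gap relative to the paper: the paper's equality $T_{\pi(D),\pi(P)} = \pi(T_{D,P})$ fails in exactly the same situation (when $d\pi$ kills $T_{D,P}$, i.e.\ when $D$ is tangent to the fiber of $\pi$ at $P$), so both proofs implicitly assume the projection is immersive along the branch of $D$ at $P$. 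Your honest flagging of this, and of the need for $P$ to avoid the center $\{e_0=\dots=e_3=0\}$, is to your credit; within the level of rigor the paper itself adopts, your argument is complete.
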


\begin{proof}
Since $\SEbar$ is smooth in~$P$, by Proposition~\ref{pContactH} we have the following inclusions of tangent spaces
\[
	T_{D,P} \subset T_{\SEbar,P} \subset T_{T',P} \,.
\]
Notice that $\pi(T') = \EulerInfty$ is a smooth quadric. 
If in addition~$\pi(D)$ is smooth in~$\pi(P)$ we obtain
\[
	T_{\pi(D),\pi(P)} = \pi(T_{D,P}) \subset \pi(T_{T',P}) = T_{\pi(T'),\pi(P)} = T_{\EulerInfty,P} \,.
\]
namely $\pi(D)$ is contact to~$\EulerInfty$ at~$\pi(P)$, as claimed.
\end{proof}

\begin{remark}
The condition that $\SEbar$ is smooth in~$P$ is really necessary. 
Consider, in fact, the following example (see Figure~\ref{fContact}).

Let $X \subset \RR^3$ be the cone defined by $(x-z)^2+y^2 = z^2$. The variety~$X$ is singular at the origin. 
Let furthermore $T$ be the plane defined by $x=0$. Then $T$ is contact to~$X$ in the line~$E$ defined by $x=y=0$. 
Finally, let $\pi \colon \RR^3 \to \RR^2$ be the projection to the $(x,y)$-plane. The projection $\pi(T)$ is the coordinate line $x=0$.

We now look at two curves: the line $D_1$ defined by $x-2z=y=0$ and the conic~$D_2$ defined by $(x-1)^2+y^2=1$.
The curve~$D_1$ passes through the singular point of~$X$ while $D_2$ intersects~$T$ in a smooth point.
Projecting to the $(x,y)$-plane, we obtain
\[
	\pi(D_2) = \{ (x-1)^2+y^2=1 \}
\]
and
\[
	\pi(D_1) = \{ y=0 \} \,.
\]
The first is, as in Corollary~\ref{cContactC}, contact to~$\pi(T)$ at the origin, but the second intersects~$\pi(T)$ transversally.
\end{remark}

\begin{figure}[H]
\centering
\begin{tikzpicture}
 \node at (0,0) {\includegraphics[width=.8\textwidth]{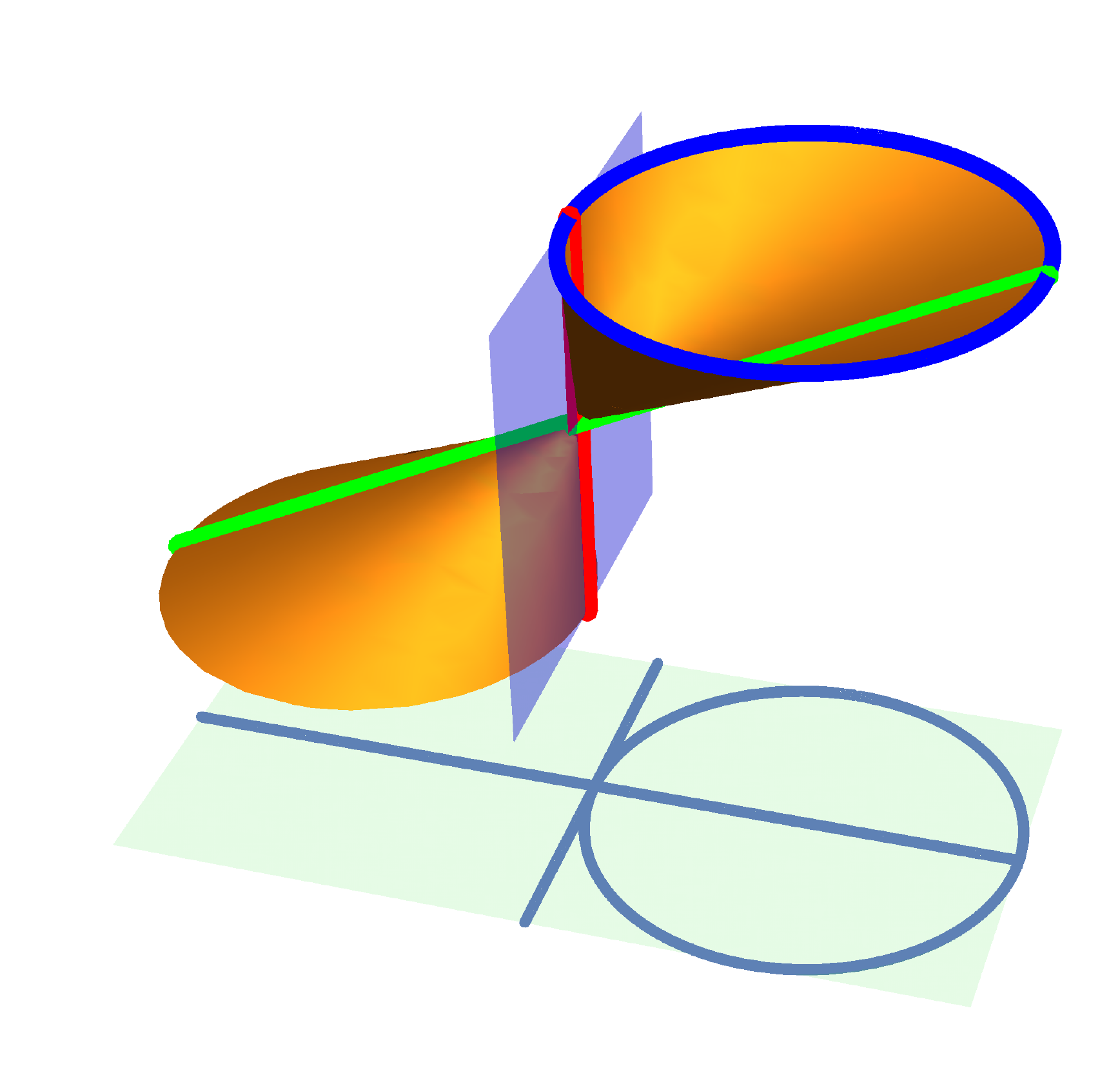}};
 \node at (-3.5, 0.3) {$D_1$};
 \node at (3.8, 3.8) {$D_2$};
 \node at (-0.8, 2.1) {$T$};
 \node at (-2.5, 2.4) {$X$};
 \node at (2.2, 0) {$E$};
 \node at (2.2, -3) {$\pi(D_1)$};
 \node at (2, -4.25) {$\pi(D_2)$};
 \node at (-0.4, -3.8) {$\pi(T)$};
 \draw[thick,->] (-2.3,2.35) .. controls (-1.5,1) .. (-1.8,-0.5); 
 \draw[thick,->] (2, 0) -- (0.4,0);
\end{tikzpicture}
\caption{Behavior of curves and contact surfaces under projections.}
\label{fContact}
\end{figure}
 
In \cite{BondTheory} the following stratification of the boundary $\SEinfty$ is introduced.

\begin{definition} \label{dBonds}
Let $P \in \SEinfty \subset \PP^{16}$ be a point on the border with coordinates $(A: x: y: r: 0)$ and consider the matrix
\[
	N := rA + 2yx^t \,.
\]
Then $P$ is called
\begin{itemize}
\item an \emph{inversion bond} if $A \not= 0$ and $N \not= 0$,
\item a \emph{butterfly bond} if $A \not= 0$ and $N=0$,
\item a \emph{similarity bond} if $A = 0$, $x \not= 0$ and $y \not= 0$,
\item a \emph{collinearity bond} if $A=0$ and either $x=0$ and $y\not=0$, or $y=0$ and $x\not=0$.
\item the \emph{vertex bond} if $A=x=y=0$.
\end{itemize}
One can check that the singular locus of~$\SEbar$ consists precisely of \emph{butterfly bonds}, \emph{collinearity bonds}, and the \emph{vertex}. See Figure~\ref{fStratification} for a visualization of this stratification.
\end{definition}

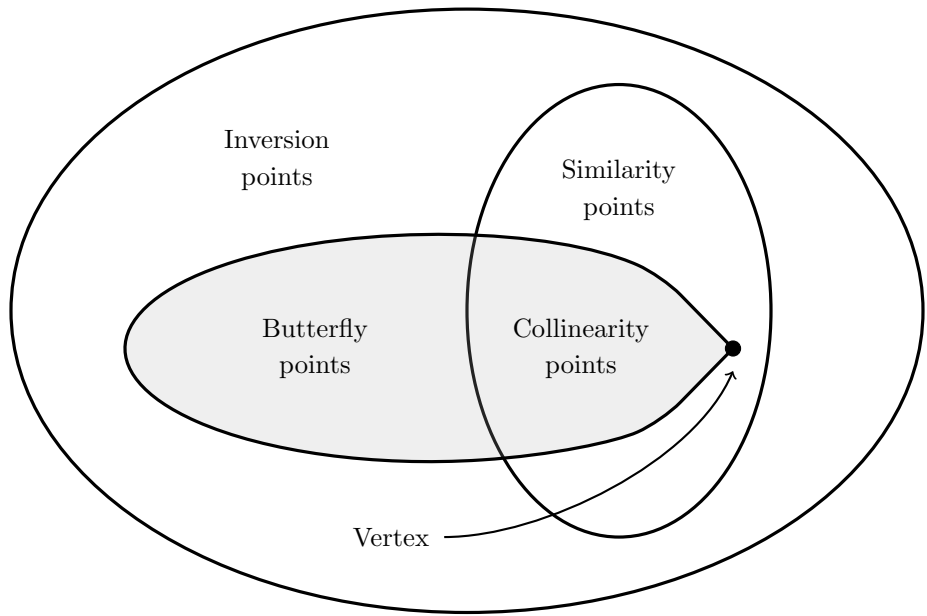
\begin{figure}
\centering
\begin{tikzpicture}
 \draw[very thick] (0,0) ellipse (6cm and 4cm);
 \draw[very thick] (2,0) ellipse (2cm and 3cm);
 \draw[very thick, fill=black!30!white, fill opacity=0.2, rounded corners=10pt] (-0.5, -0.5) +(40:4cm and 1.5cm) arc (40:320:4cm and 1.5cm) [sharp corners] -- (3.5,-0.5) [rounded corners=10pt] -- cycle;
 \draw[fill=black] (3.5,-0.5) circle (.1cm);
 \node[align=center] at (-2,-0.5) {Butterfly \\ points};
 \node[align=center] at (1.5,-0.5) {Collinearity \\ points};
 \node[align=center] at (-2.5,2) {Inversion \\ points};
 \node[align=center] at (2,1.6) {Similarity \\ points};
 \node[align=center] at (-1,-3) {Vertex};
 \draw[thick,->] (-0.3,-3) .. controls (1,-3) and (3, -2) .. (3.5,-.8);
\end{tikzpicture}
\caption{The stratification of the boundary $\SEinfty$. The gray area is the singular locus of~$\SE_3$. The ``wedge'' at the vertex denotes that the singular locus is in turn singular at the vertex.}
\label{fStratification}
\end{figure}
 
\begin{definition}
\label{dBondSpan}
Let $D \subset \SEbar \subset \PP^{16}$. 
We say that $D$ has an inversion, butterfly, similarity, collinearity, or vertex bond~$P$
if and only if
\[
	P \in \langle D \rangle.
\]
\end{definition}

\begin{remark}
In \cite{BondTheory}, bonds are related to hexapods, and not configuration curves.
This means that in \cite{BondTheory} a point~$P \in \SEbar$ is a bond for a hexapod with leg-equations $\{ l_i \}_{i=1}^6$ if
\[
	P \in V(l_1,\dots,l_6) .
\]
This condition is less strong than the one we ask in this paper. In fact, if $D$ is a configuration curve of the same hexapod 
and $\codim \langle D \rangle \ge 7$ then the inclusion
\[
	\langle D \rangle \subset V(l_1,\dots,l_6)
\]
is strict, and so points that are bonds for the hexapod according to \cite{BondTheory} 
may not be bonds for~$D$ according to the definition that we use in this paper.
\end{remark}

\begin{remark}
In \cite{BondTheory} the authors prove strong geometric restrictions for hexapods having butterfly or collinearity bonds.
\end{remark}

We can now reformulate Corollary~\ref{cContactC}:

\begin{corollary} \label{cContactButterfly}
Let $D \subset \SEbar$ be a curve without butterfly, collinearity or vertex bonds.
Let $\pi \colon D \to \PP^3$ be the projection to Euler's~$\PP^3$.
Then $\pi(D)$ is contact to~$\EulerInfty$.
\end{corollary}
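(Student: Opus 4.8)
The plan is to deduce this from the already-established Corollary~\ref{cContactC} by first showing that the bond hypothesis forces $\SEbar$ to be smooth along~$D$, and then reading off contact pointwise at the border.

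First I would translate the hypothesis into a smoothness statement. By Definition~\ref{dBondSpan}, saying that $D$ has no butterfly, collinearity, or vertex bond means that no such bond lies in~$\langle D \rangle$; since $D \subseteq \langle D \rangle$, in particular $D$ itself passes through none of these points. On the other hand, Definition~\ref{dBonds} records that $\sing(\SEbar)$ consists precisely of the butterfly bonds, the collinearity bonds, and the vertex. Combining these two facts, $D$ is disjoint from~$\sing(\SEbar)$, so $\SEbar$ is smooth at every point of~$D$, and in particular at every border point $P \in D \cap \SEinfty$. (Here I use that $\pi$ restricts to the morphism $D \to \PP^3$ of Corollary~\ref{cContactC}, so the center of projection is irrelevant on~$D$.)

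Next I would apply Corollary~\ref{cContactC} at each such~$P$. Since $\SEbar$ is smooth at~$P$, the corollary yields the dichotomy that either $\pi(D)$ is singular at~$\pi(P)$, or $\pi(D)$ is contact to~$\EulerInfty$ at~$\pi(P)$. To conclude contact in the sense of Definition~\ref{dContact}, I would consider the contact divisor $Z := \pi(D) \cap \EulerInfty$ and verify tangency at its smooth points. Every point $q \in Z$ is an image $q = \pi(P)$ with $P \in D$ satisfying $h(P) = 0$, i.e.\ a border point $P \in D \cap \SEinfty$, because $\pi(P) \in \EulerInfty$ is equivalent to $e_0^2+e_1^2+e_2^2+e_3^2 = 0$ and $\SEbar \cap \{h=0\} = \SEinfty$ set-theoretically (as in the proof of Proposition~\ref{pContactH}). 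At a point of~$Z$ where $\pi(D)$ is smooth, the singular alternative of Corollary~\ref{cContactC} is excluded, hence $\pi(D)$ is tangent to~$\EulerInfty$ there; this is exactly the contact condition, so $\EulerInfty$ is a contact hypersurface of~$\pi(D)$ with contact divisor~$Z$.

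I expect the only delicate point to be the bookkeeping in the last step: matching the pointwise dichotomy of Corollary~\ref{cContactC} with the global notion of contact hypersurface in Definition~\ref{dContact}, and in particular observing that the points of~$Z$ at which tangency might fail are exactly the singular points of~$\pi(D)$, which are excluded once one restricts to smooth points of the contact divisor. The geometric input that does the real work is the identification of $\sing(\SEbar)$ with the union of butterfly, collinearity, and vertex bonds asserted in Definition~\ref{dBonds}, since this is what makes the bond hypothesis equivalent to smoothness of~$\SEbar$ along~$D$; everything beyond that is a direct invocation of Corollary~\ref{cContactC}.
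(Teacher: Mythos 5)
Your proof is correct and is exactly the argument the paper intends: Corollary~\ref{cContactButterfly} is stated as a direct reformulation of Corollary~\ref{cContactC}, obtained by noting that the bond hypothesis (via Definition~\ref{dBondSpan} and the identification of $\sing(\SEbar)$ with the butterfly, collinearity, and vertex bonds in Definition~\ref{dBonds}) forces $D$ to avoid $\sing(\SEbar)$, so that the smoothness hypothesis of Corollary~\ref{cContactC} holds at every border point, and the singular alternative is harmless since Definition~\ref{dContact} only demands tangency where $\pi(D)$ is smooth. The paper gives no written proof for this corollary, and your writeup supplies precisely the intended bookkeeping, including the correct observation that every point of $\pi(D) \cap \EulerInfty$ comes from a border point of~$D$ because $\SEbar \cap \{h=0\} = \SEinfty$ set-theoretically.
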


\section{Pfaffian equations} \label{sPfaffianEquations}

As we have seen in Proposition~\ref{pEquationsWeightedPP} and Proposition~\ref{pContactH}, 
Pfaffian equations feature prominently in the description of~$\SEbar$ and its border~$\SEinfty$. 
In this section we establish some geometric consequences from the existence of Pfaffian equations in the ideal of a space curve. 
We will use these consequences to prove our Classification Theorem~\ref{tClassification}.

\begin{notation}
In this paper we often deal with Betti tables and minimal graded resolutions of modules and sheaves. 
In these cases, we adopt the so-called ``Macaulay notation'', 
which imposes to write resolutions with arrows pointing to the left; 
in this way, in fact, it is easy to read off the Betti table from the resolution. 
When exact sequences are not related to resolutions, 
we use the standard convention of writing them with arrows pointing to the right.
\end{notation}

Let $K$ be an algebraically closed field and let $K[e_0,\dots,e_3]$ be the coordinate ring of~$\PP^3$.
In this section we consider skew-symmetric matrices
\[
	M := \begin{pmatrix}
	           0 & z_{0,1} & z_{0,2} & z_{0,3} & e_0 \\
	 -z_{0,1} &          0 & z_{1,2} & z_{1,3} & e_1 \\
	 -z_{0,2} & -z_{1,2} &        0  & z_{2,3} & e_2 \\
	 -z_{0,3} & -z_{1,3} & -z_{2,3} & 0 & e_3 \\
	 -e_0 & -e_1 & -e_2 & -e_3 & 0
	 \end{pmatrix}
\]
with $z_{i,j}$ homogeneous polynomials of degree~$2$.

Let $P_i := P_i(M)$ be the Pfaffian of the $4 \times 4$ matrix~$M_i$ 
obtained by removing the $i$-th row and column of~$M$. 
Then $P_0, \dotsc, P_3$ are of degree~$3$ and $P_4$ is of degree~$4$.

Let furthermore $I_M := (P_0,\dots,P_4)$ be the ideal generated by the $4 \times 4$ Pfaffians of~$M$, 
and $X_M := V(I_M) \subset \PP^3$ be the scheme defined by this ideal. 

The following is well-known:

\begin{proposition} \label{pComplex}
With the notations above we have a complex
\[
	\sO_{X_M} \leftarrow \sO_{\PP^3} 
	\xleftarrow{\mathbf{P}} \begin{matrix} 4\sO_{\PP^3}(-3) \\ \oplus \\ \sO_{\PP^3}(-4) \end{matrix} 
	\xleftarrow{M} \begin{matrix} \sO_{\PP^3}(-4) \\ \oplus \\ 4\sO_{\PP^3}(-5) \end{matrix} 
	\xleftarrow{\mathbf{P}^t} \sO_{\PP^3}(-8) 
	\leftarrow 0
\]
with $\mathbf{P} = (P_0,-P_1,P_2,-P_3,P_4)$.

If $X_M$ is zero-dimensional, this complex is exact and a minimal free resolution of~$\sO_{X_M}$.
\end{proposition}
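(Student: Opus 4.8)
The plan is to treat this as an instance of the Buchsbaum--Eisenbud structure theorem for Gorenstein ideals of codimension~$3$, carried out in two stages: first verifying that the displayed maps form a complex, then upgrading to exactness under the zero-dimensionality hypothesis. The degree bookkeeping is routine and forced by $\deg z_{i,j}=2$, $\deg e_k=1$, $\deg P_i = 3$ for $i\le 3$, and $\deg P_4 = 4$, so I would not dwell on it.

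First I would check the complex property. Since the leftmost arrow $\sO_{\PP^3}\to\sO_{X_M}$ is the quotient by $I_M=(P_0,\dots,P_4)$ and $\mathbf{P}$ has image exactly $I_M$, the composite $\sO_{X_M}\leftarrow\sO_{\PP^3}\xleftarrow{\mathbf{P}}\cdots$ vanishes by definition. The two remaining composites $\mathbf{P}\circ M$ and $M\circ\mathbf{P}^t$ are negatives of each other's transposes, because $M$ is skew-symmetric and $(\mathbf{P}M)^t=M^t\mathbf{P}^t=-M\mathbf{P}^t$; so it suffices to prove $M\,\mathbf{P}^t=0$. This is the classical Laplace-type expansion for Pfaffians: for a $(2n+1)\times(2n+1)$ skew-symmetric matrix the $i$-th entry of $M\,\mathbf{P}^t$ equals, up to sign, the Pfaffian of a skew-symmetric matrix with a repeated row and column, hence zero. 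I would also record here that every entry occurring in $\mathbf{P}$, $M$, and $\mathbf{P}^t$ (namely the $z_{i,j}$, the $e_k$, and the $P_k$) is a form of positive degree, so no map contains a unit; this gives minimality once exactness is established.

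For exactness I would invoke the Buchsbaum--Eisenbud acyclicity criterion applied to $\sO_{\PP^3}\leftarrow 4\sO_{\PP^3}(-3)\oplus\sO_{\PP^3}(-4)\leftarrow\sO_{\PP^3}(-4)\oplus 4\sO_{\PP^3}(-5)\leftarrow\sO_{\PP^3}(-8)\leftarrow 0$. The expected ranks match, since $\rank\mathbf{P}+\rank M=1+4=5$ and $\rank M+\rank\mathbf{P}^t=4+1=5$. The criterion then reduces exactness to the depth bounds $\operatorname{depth}I(\mathbf{P}^t)\ge 3$, $\operatorname{depth}I(M)\ge 2$, and $\operatorname{depth}I(\mathbf{P})\ge 1$ on the ideals of maximal minors. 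Here $I(\mathbf{P})=I(\mathbf{P}^t)=I_M$, while $I(M)$ is the ideal of $4\times 4$ minors of the skew-symmetric $M$, which equals $I_M^2$ and hence shares the radical of $I_M$. The zero-dimensionality of $X_M\subset\PP^3$ means $\codim I_M=3$, and because $\sO_{\PP^3}$ is Cohen--Macaulay this forces the grade of $I_M$ to be $3$; all three depth inequalities follow at once. Combined with the minimality observation above, the complex is then a minimal free resolution of $\sO_{X_M}$.

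The only genuine subtlety I anticipate lies in the skew-symmetric bookkeeping of the acyclicity step: confirming that the ideal of $4\times 4$ minors $I(M)$ has the same radical as the Pfaffian ideal $I_M$ (so that its depth is governed by $\codim I_M$), and that the ranks of $M$ and of the Pfaffian row are the generic ones off $X_M$. Both are standard facts about odd-sized skew-symmetric matrices, but they are where the argument uses more than the universal Pfaffian identity; everything else is either that identity or a direct consequence of $\codim I_M=3$. Alternatively, I would note that one can bypass the criterion altogether by citing the Buchsbaum--Eisenbud structure theorem, which asserts that precisely this Pfaffian complex resolves any Gorenstein quotient of codimension~$3$.
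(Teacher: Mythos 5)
Your proposal is correct and takes essentially the same approach as the paper, whose entire proof is the citation ``See \cite[Section~3]{BuEi}'': you have simply reconstructed the Buchsbaum--Eisenbud argument itself, with the key facts rightly identified (the Pfaffian Laplace expansion giving $M\mathbf{P}^t=0$, the acyclicity criterion with $I(\mathbf{P})=I(\mathbf{P}^t)=I_M$ and $I(M)=I_M^2$ so that all grade bounds follow from $\codim I_M=3$, and positive-degree entries for minimality). Nothing to correct.
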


\begin{proof}
See \cite[Section~3]{BuEi}.
\end{proof}

\begin{remark}
In this section we will be interested in the case where $X_M$ is NOT zero-dimensional.
\end{remark}

\begin{proposition} 
\label{pDeg3impliesDeg4}
Let $Z \subset \PP^3$ be an integral scheme such that the four degree~$3$ Pfaffians of~$M$ vanish on~$Z$. 
Then also the degree~$4$ Pfaffian of~$M$ vanishes on~$Z$.
\end{proposition}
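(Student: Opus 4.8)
The plan is to extract a single syzygy from the complex in Proposition~\ref{pComplex}. That complex satisfies $\mathbf{P} \circ M = 0$, and since $M$ is skew-symmetric this is equivalent to the column identity $M \cdot \mathbf{P}^t = 0$, i.e.\ the signed Pfaffian vector $\mathbf{P} = (P_0, -P_1, P_2, -P_3, P_4)$ lies in the kernel of~$M$. First I would write out the scalar equations of this identity row by row. Pairing the first row of $M$ with $\mathbf{P}^t$ gives
\[
	e_0 P_4 = z_{0,1} P_1 - z_{0,2} P_2 + z_{0,3} P_3,
\]
and the three remaining rows among the first four yield analogous expressions writing each $e_i P_4$ (for $i = 1,2,3$) as a linear combination of $P_0, P_1, P_2, P_3$ with coefficients among the $z_{\ast,\ast}$. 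The essential point is that in all four identities the right-hand side is a combination of the degree~$3$ Pfaffians $P_0, P_1, P_2, P_3$ alone, so that $e_i P_4 \in (P_0, P_1, P_2, P_3)$ for every $i \in \{0,1,2,3\}$.

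Next I would restrict to $Z$ and pass to its homogeneous coordinate ring $S_Z = K[e_0,\dots,e_3]/I_Z$. Because the four degree~$3$ Pfaffians vanish on $Z$ by hypothesis, the four identities above descend to $\bar{e}_i \cdot \overline{P_4} = 0$ in $S_Z$ for all~$i$. Here the hypothesis that $Z$ is integral enters decisively: $I_Z$ is then a homogeneous prime ideal, so $S_Z$ is an integral domain.

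Finally I would conclude by a domain argument. In $S_Z$, each relation $\bar{e}_i\, \overline{P_4} = 0$ forces $\overline{P_4} = 0$ or $\bar{e}_i = 0$. If all $\bar{e}_i$ were zero then $I_Z$ would contain the irrelevant ideal $(e_0,e_1,e_2,e_3)$, which is impossible for the prime ideal of a nonempty subscheme of~$\PP^3$; hence some $\bar{e}_i \neq 0$, and cancellation in the domain $S_Z$ gives $\overline{P_4} = 0$, that is, $P_4$ vanishes on~$Z$. The only genuine subtlety is this last step: integrality of $Z$ is exactly what rules out $P_4$ being a nonzero zero-divisor annihilated by all the~$e_i$, whereas the derivation of the kernel relation is the standard Pfaffian identity for odd skew-symmetric matrices and is therefore routine.
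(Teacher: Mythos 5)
Your proposal is correct and takes essentially the same route as the paper: both extract from the syzygy $\mathbf{P}^t \cdot M = 0$ of Proposition~\ref{pComplex} the relations $e_i P_4 \in (P_0, P_1, P_2, P_3)$ and then cancel a nonvanishing coordinate to conclude $P_4 \in I_Z$. The only (cosmetic) difference is that the paper evaluates the single relation $P_1 z_{0,1} - P_2 z_{0,2} + P_3 z_{0,3} - P_4 e_0 = 0$ pointwise at each $a \in Z$ after renumbering coordinates so that $e_0(a) \neq 0$, using that the integral scheme $Z$ is in particular reduced, whereas you run the same argument once and for all in the homogeneous coordinate ring $S_Z$, using that integrality makes it a domain.
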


\begin{proof}
Let $a$ be a point of~$Z$. 
Then at least one coordinate of~$a$ is nonzero, 
so after possibly renumbering the variables we can assume $e_0(a) \not=0$. 
Now $\mathbf{P}^t \cdot M = 0$ by Proposition~\ref{pComplex}. 
Considering the first column of~$M$, we obtain
\[ 
	P_1 z_{0,1} - P_2 z_{0,2} + P_3 z_{0,3} - P_4e_0 = 0.
\]
Evaluating this at~$a$ gives
\[
	P_4(a) e_0(a) = 0.
\]
Since $e_0(a) \not= 0$, it follows that $P_4(a) = 0$.
Therefore $P_4$ vanishes on~$Z$.
\end{proof}

\begin{proposition} \label{p3pfaffians}
Assume that the space of the degree $3$ polynomials in~$I_M$ is at most $3$-dimensional. 
Then there exists a $2 \times 3$ matrix
\[
	N = 
	\begin{pmatrix}
		l_0 & l_1 & l_2 \\
		q_0 & q_1 & q_2 \\
	\end{pmatrix}
\]
with $l_1, l_2,l_3$ of degree~$1$ and linearly independent, and $\deg q_i = 2$ such that 
the ideal~$I_M$ generated by the $4 \times 4$ Pfaffians of~$M$ is the same as 
the one generated by the $2 \times 2$ minors of~$N$.

If $X_M$ is a curve, it has degree~$7$, arithmetic genus~$5$ and Betti table
\[
	\begin{matrix} 1 & - & -   \\  - & - & -  \\ - & 3 & 1 \\ - & - & 1    \end{matrix} \ .
\]
\end{proposition}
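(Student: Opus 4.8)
The plan is to squeeze two syzygies out of the situation --- the ``tautological'' one coming from Proposition~\ref{pComplex} and the ``extra'' one forced by the dimension hypothesis --- and to show that together they present the three surviving cubics as the $2\times 2$ minors of the asserted matrix, via the Koszul resolution of a regular sequence of linear forms. First I would record what the identity $\mathbf{P}\, M = 0$ of Proposition~\ref{pComplex} gives. Reading off the last column of~$M$ (whose entries are $e_0,\dots,e_3,0$) yields the linear syzygy
\[
	e_0 P_0 - e_1 P_1 + e_2 P_2 - e_3 P_3 = 0 \,,
\]
a genuine degree-$4$ relation among the four cubic generators, with the coordinates as coefficients. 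Reading off the remaining columns (whose last entry is $-e_j$) gives $P_4\, e_j \in (P_0,P_1,P_2,P_3)$ for every~$j$; this is the relation that will later push $P_4$ into the minor ideal.

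Next I translate the hypothesis. Since $(I_M)_3 = \langle P_0,P_1,P_2,P_3\rangle$ has dimension at most~$3$, there is a constant vector $c\neq 0$ with $\sum_i c_i P_i = 0$ (in the principal case this is unique up to scale). Completing $c$ to a basis, choose $Q \in \mathrm{GL}_4(K)$ with last row $c^t$ and set $\hat{P} := Q\,(P_0,P_1,P_2,P_3)^t$, so that $\hat P_3 = 0$ and $\hat P_0,\hat P_1,\hat P_2$ are linearly independent cubics generating the same ideal. The key point is that the linear syzygy transforms cleanly: writing $\epsilon := (e_0,-e_1,e_2,-e_3)^t$, the relation above is $\epsilon^t (P_0,\dots,P_3)^t = 0$, hence $(Q^{-t}\epsilon)^t \hat P = 0$. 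Because $\epsilon$ is a sign change of the coordinate frame and $Q^{-t}$ is invertible, the first three entries $l_0,l_1,l_2$ of $Q^{-t}\epsilon$ are automatically three linearly independent linear forms, and since $\hat P_3 = 0$ we obtain $l_0\hat P_0 + l_1\hat P_1 + l_2\hat P_2 = 0$. Thus the linear independence of the $l_i$, which is the one point that could conceivably fail, comes for free from the change of basis.

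Now I would invoke the Koszul structure. As $l_0,l_1,l_2$ are independent linear forms, they form a regular sequence in $K[e_0,\dots,e_3]$, so their first syzygy module is generated by the Koszul relations. Hence the syzygy $(\hat P_0,\hat P_1,\hat P_2)$ is a combination of the three Koszul generators, with coefficients $q_i$ necessarily of degree $3-1 = 2$; writing this out exhibits $\hat P_0,\hat P_1,\hat P_2$, up to sign, as the $2\times 2$ minors of
\[
	N = \begin{pmatrix} l_0 & l_1 & l_2 \\ q_0 & q_1 & q_2 \end{pmatrix},
\]
so that $(P_0,\dots,P_3)$ equals the minor ideal $J_N$. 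To upgrade this to $I_M = J_N$ I must show $P_4 \in J_N$: the relations $P_4\, e_j \in (P_0,\dots,P_3) = J_N$ say exactly that $P_4 \in J_N : \mathfrak{m}$, and once $J_N$ is known to be saturated this forces $P_4 \in J_N$. I expect this saturation to be the main obstacle, since it requires $J_N$ to have codimension exactly~$2$; this is precisely where the hypothesis that $X_M$ is a curve enters, for then the $2\times 2$ minors of $N$ cut out an arithmetically Cohen--Macaulay scheme and Hilbert--Burch guarantees saturation (with $l_0,l_1,l_2$ independent and the $q_i$ nondegenerate, a common factor --- the only way codimension could drop to~$1$ --- is excluded).

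Finally, in the curve case the Hilbert--Burch theorem produces the minimal free resolution
\[
	0 \to \sO_{\PP^3}(-4)\oplus\sO_{\PP^3}(-5) \xrightarrow{\,N^t\,} 3\,\sO_{\PP^3}(-3) \to \sO_{\PP^3} \to \sO_{X_M} \to 0 \,,
\]
whose two syzygies are the rows of~$N$: the linear row $(l_0,l_1,l_2)$ gives the degree-$4$ syzygy and the quadratic row $(q_0,q_1,q_2)$ the degree-$5$ one. This is exactly the claimed Betti table. Reading the alternating sum of shifted binomials,
\[
	\binom{t+3}{3} - 3\binom{t}{3} + \binom{t-1}{3} + \binom{t-2}{3} = 7t - 4 \,,
\]
gives the Hilbert polynomial, whence $\deg X_M = 7$ and, from the constant term $1 - p_a = -4$, arithmetic genus $p_a = 5$, completing the proof.
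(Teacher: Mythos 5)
Your route to the cubic part is sound and genuinely different from the paper's: you extract the linear syzygy $e_0P_0-e_1P_1+e_2P_2-e_3P_3=0$ from $\mathbf{P}\,M=0$, transport it through a constant change of basis that kills one cubic Pfaffian, and resolve it by the Koszul complex of the regular sequence $l_0,l_1,l_2$ --- and your observation that independence of the $l_i$ is automatic (the four entries of $Q^{-t}\epsilon$ are a basis of the linear forms, so any three are independent) is correct and handles the one delicate point cleanly. This gives $(P_0,\dotsc,P_3)=I_2(N)$, the ideal of $2\times 2$ minors, and your Hilbert--Burch resolution, Hilbert polynomial $7t-4$, degree $7$ and genus $5$ all match the paper. (The paper uses the same Koszul mechanism, but applied to the entries $z_{1,2},z_{1,3},z_{2,3}$ of $M$ itself, and then normalizes $M$ by an explicit congruence $BM'B^t$ with $B$ unipotent with linear entries.)

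The genuine gap is the membership $P_4\in I_2(N)$. The proposition asserts $I_M=I_2(N)$ \emph{unconditionally}; the hypothesis that $X_M$ is a curve governs only the second half (degree, genus, Betti table). Your argument establishes $P_4\,\mathfrak{m}\subset I_2(N)$ and then deduces $P_4\in I_2(N)$ only via saturation of $I_2(N)$, which you secure only in the codimension-$2$ case --- and your parenthetical that a common factor among the minors ``is excluded'' is false: Proposition~\ref{p2pfaffians} invokes exactly this proposition in the degenerate case, where the minors become $l_0q,\,l_1q$ with common factor $q$ and $X_M$ is a quadric surface union a line, so the unconditional statement is genuinely needed downstream (and through it in Proposition~\ref{pClassificationAlgebraic}). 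The paper's matrix normalization closes this for free: since $M'''$ is a row/column reordering of $BM'B^t$ with $\det B=1$, the vector of $4\times4$ Pfaffians transforms by a matrix invertible over the polynomial ring, so the Pfaffian ideal is unchanged; and the degree-$4$ Pfaffian of $M'''$ is \emph{identically zero}, so $P_4$ is a polynomial combination of the three cubic minors with no geometric hypothesis whatsoever. To repair your version you would have to verify $\mathfrak{m}\notin\operatorname{Ass}\bigl(R/I_2(N)\bigr)$ also in the degenerate cases, e.g.\ by writing $I_2(N)=q\cdot I'$ and chasing associated primes through $0\to R/I'\to R/I_2(N)\to R/(q)\to 0$, plus the trivial case $I_2(N)=0$ (where $P_4\,\mathfrak{m}=0$ forces $P_4=0$); this is plausible but is precisely the care your sketch postpones. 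A minor further slip, harmless because unused: when the space of cubics has dimension less than $3$, your $\hat P_0,\hat P_1,\hat P_2$ are not linearly independent, though the Koszul step never needs them to be.
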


\begin{proof}
This is a straightforward computation.

All degree~$3$ polynomials in~$I_M$ are generalized Pfaffians, i.e., 
Pfaffians of some $M'$ obtained form $M$ by simultaneous row and column operations. 
Since there is a $4$-dimensional space of generalized degree~$3$ Pfaffians, 
there must be at least one generalized Pfaffian that vanishes identically. 
Now the claim above is invariant under automorphisms of~$\PP^3$ and simultaneous row and column operations on~$M$, 
so we can assume that $P_0 = 0$, namely
\[
	{e}_{3} {z}_{1,2} - {e}_{2} {z}_{1,3} + {e}_{1} {z}_{2,3} = 0 \,.
\]
We can write this equation as
\[
	(e_3,-e_2,e_1) \begin{pmatrix} z_{1,2} \\ z_{1,3} \\ z_{2,3} \end{pmatrix} = 0,
\]
i.e., $\left( \begin{smallmatrix} z_{1,2} \\ z_{1,3} \\ z_{2,3} \end{smallmatrix} \right)$ is a degree~$2$ syzygy of~$(e_3,-e_2,e_1)$.
The space of syzygies of~$(e_3,-e_2,e_1)$ is generated by the columns of 
\[
	\begin{pmatrix}
		0&
		{e}_{1}&
		{e}_{2}\\
		{-{e}_{1}}&
		0&
		{e}_{3}\\
		{-{e}_{2}}&
		{-{e}_{3}}&
		0\\
	\end{pmatrix} \,.
\]
It follows that there are linear polynomials $m_1,m_2,m_3$ such that
\[
	\begin{pmatrix} 
		z_{1,2} \\ z_{1,3} \\ z_{2,3} 
	\end{pmatrix} = 
	\begin{pmatrix}0&
		{e}_{1}&
		{e}_{2}\\
		{-{e}_{1}}&
		0&
		{e}_{3}\\
		{-{e}_{2}}&
		{-{e}_{3}}&
0
	\end{pmatrix}
	\begin{pmatrix} 
		m_3 \\ -m_2 \\ m_1 
	\end{pmatrix} =
	\begin{pmatrix}
		-{e}_{1} {m}_{2}+{e}_{2} {m}_{1}\\
		-{e}_{1} {m}_{3}+{e}_{3} {m}_{1}\\
		-{e}_{2} {m}_{3}+{e}_{3} {m}_{2}
	\end{pmatrix}.
\]
Substituting this into~$M$ we obtain
\[
	M' = 
	\begin{pmatrix}
		0&
		{z}_{0,1}&
		{z}_{0,2}&
		{z}_{0,3}&
		{e}_{0}\\
		{-{z}_{0,1}}&
		0&
		-{e}_{1} {m}_{2}+{e}_{2} {m}_{1}&
		-{e}_{1} {m}_{3}+{e}_{3} {m}_{1}&
		{e}_{1}\\
		{-{z}_{0,2}}&
		{e}_{1} {m}_{2}-{e}_{2} {m}_{1}&
		0&
		-{e}_{2} {m}_{3}+{e}_{3} {m}_{2}&
		{e}_{2}\\
		{-{z}_{0,3}}&
		{e}_{1} {m}_{3}-{e}_{3} {m}_{1}&
		{e}_{2} {m}_{3}-{e}_{3} {m}_{2}&
		0&
		{e}_{3}\\
		{-{e}_{0}}&
		{-{e}_{1}}&
		{-{e}_{2}}&
		{-{e}_{3}}&
        0
	\end{pmatrix}.
\]
We now consider the invertible matrix
\[
	B = 
	\begin{pmatrix}
		1&
		0&
		0&
		0&
		0\\
		0&
		1&
		0&
		0&
		{m}_{1}\\
		0&
		0&
		1&
		0&
		{m}_{2}\\
		0&
		0&
		0&
		1&
		{m}_{3}\\
		0&
		0&
		0&
		0&
1
	\end{pmatrix}
\]
and, recalling that for a $4 \times 4$ skew-symmetric matrix~$A$ it holds 
$\mathrm{Pf}(CAC^t) = \det(C) \mathrm{Pf(A)}$, we compute the matrix $M'' = B M' B^t$:
\[
	\begin{pmatrix}
		0&
		{z}_{0,1}+{e}_{0} {m}_{1}&
		{z}_{0,2}+{e}_{0} {m}_{2}&
		{z}_{0,3}+{e}_{0} {m}_{3}&
		{e}_{0}\\
		-{z}_{0,1}-{e}_{0} {m}_{1}&
		0&
		0&
		0&
		{e}_{1}\\
		-{z}_{0,2}-{e}_{0} {m}_{2}&
		0&
		0&
		0&
		{e}_{2}\\
		-{z}_{0,3}-{e}_{0} {m}_{3}&
		0&
		0&
		0&
		{e}_{3}\\
		{-{e}_{0}}&
		{-{e}_{1}}&
		{-{e}_{2}}&
		{-{e}_{3}}&
0
	\end{pmatrix}.
\]
Reordering the rows and columns finally gives
\[
	M''' = 
	\begin{pmatrix}
		0&
		{-{e}_{0}}&
		{-{e}_{1}}&
		{-{e}_{2}}&
		{-{e}_{3}}\\
		{e}_{0}&
		0&
		{z}_{0,1}+{e}_{0} {m}_{1}&
		{z}_{0,2}+{e}_{0} {m}_{2}&
		{z}_{0,3}+{e}_{0} {m}_{3}\\
		{e}_{1}&
		-{z}_{0,1}-{e}_{0} {m}_{1}&
		0&
		0&
		0\\
		{e}_{2}&
		-{z}_{0,2}-{e}_{0} {m}_{2}&
		0&
		0&
		0\\
		{e}_{3}&
		-{z}_{0,3}-{e}_{0} {m}_{3}&
		0&
		0&
0
	\end{pmatrix}.
\]
The Pfaffians of~$M'''$ are now the same as the $2 \times 2$ minors of 
\[
	N = 
	\begin{pmatrix}
		{-{e}_{1}}&
		{-{e}_{2}}&
		{-{e}_{3}}
		\\
		{z}_{0,1}+{e}_{0} {m}_{1}&
		{z}_{0,2}+{e}_{0} {m}_{2}&
{z}_{0,3}+{e}_{0} {m}_{3}
	\end{pmatrix}.
\]
Since the Pfaffians of~$M'''$ generate the same ideal as the Pfaffians of~$M$, this proves the first claim. 

Associated to the matrix~$N$ is an Eagon-Northcott complex
\[
	0 \leftarrow I_X \leftarrow 3\sOPP(-3) \xleftarrow{N^t} \sOPP(-4) \oplus \sOPP(-5) \leftarrow 0 \,.
\]
If $X$ is of codimension~$2$, i.e., a curve, then the Eagon-Northcott complex is 
exact (see \cite[Appendix~2H, Theorem A2.60]{Eisenbud2005}; in this case this 
is the so-called Hilbert-Burch theorem). This gives the claimed Betti table. 
Degree and arithmetic genus can be read from the Betti table.
\end{proof}

\begin{proposition} \label{p2pfaffians}
Assume that the space of the degree~$3$ polynomials in~$I_M$ is at most $2$-dimensional. 
Then there exists linear polynomials~$l_0,l_1$ and a quadratic polynomial~$q$ such that
\[
	I_M = (l_0q,l_1q) \,.
\]
If the space of degree~$3$ polynomials in~$I_M$ is exactly $2$-dimensional then 
$X_M$ is the union of a quadric and a line with Betti table
\[
	\begin{matrix} 1 & - & -   \\  - & - & -  \\ - & 2 & 1   \end{matrix} \,.
\]
If the dimension of the space of degree~$3$ polynomials in~$I_M$ is less than~$2$, then $I_M = 0$.
\end{proposition}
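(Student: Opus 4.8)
The plan is to bootstrap from Proposition~\ref{p3pfaffians}. The hypothesis here, that the space of degree-$3$ elements of~$I_M$ is at most $2$-dimensional, is a strengthening of the hypothesis of Proposition~\ref{p3pfaffians}, so that result applies and produces a $2 \times 3$ matrix
\[
	N = \begin{pmatrix} l_0 & l_1 & l_2 \\ q_0 & q_1 & q_2 \end{pmatrix},
\]
with $l_0, l_1, l_2$ linearly independent linear forms and the $q_i$ quadratic, whose $2 \times 2$ minors generate~$I_M$. These three minors $m_{12} = l_1 q_2 - l_2 q_1$, $m_{02} = l_0 q_2 - l_2 q_0$, and $m_{01} = l_0 q_1 - l_1 q_0$ are, up to scalars, exactly the degree-$3$ elements of~$I_M$; hence by assumption they span a space of dimension at most~$2$, and so are linearly dependent over~$K$.

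First I would exploit the freedom to replace~$N$ by~$NB$ for a \emph{constant} $B \in \mathrm{GL}_3(K)$: this preserves both the ideal of minors and the shape of~$N$ (the first row stays a triple of linearly independent linear forms), while transforming the triple of minors by the invertible linear map~$\wedge^2 B$, and every element of~$\mathrm{GL}_3(K)$ arises this way. Since the minors are linearly dependent, a nontrivial constant combination of them vanishes; choosing~$B$ so that this combination becomes one of the new minors, I may assume after renaming that $m_{01} = l_0 q_1 - l_1 q_0 = 0$. As $l_0$ and~$l_1$ are linearly independent linear forms, they are coprime in the unique factorization domain $K[e_0, \dots, e_3]$, so this equality forces $q_0 = l_0 a$ and $q_1 = l_1 a$ for a common linear form~$a$. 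Substituting, the two remaining minors become
\[
	m_{02} = l_0\,(q_2 - l_2 a), \qquad m_{12} = l_1\,(q_2 - l_2 a),
\]
so both are divisible by the single quadratic form $q := q_2 - l_2 a$. Therefore $I_M = (m_{02}, m_{12}) = (l_0 q, l_1 q)$, which is the first claim.

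For the statement about dimension exactly~$2$, note that the degree-$3$ part of $(l_0 q, l_1 q)$ is spanned by $l_0 q$ and~$l_1 q$; for this span to be $2$-dimensional these two forms must be linearly independent, which forces $q \neq 0$ and $l_0, l_1$ independent. Then $X_M = V(q) \cup V(l_0, l_1)$ is the union of a quadric and a line. Since $q \neq 0$ and $l_0, l_1$ form a regular sequence, the syzygies of $(l_0 q, l_1 q)$ are generated by the single Koszul relation $l_1 \cdot (l_0 q) - l_0 \cdot (l_1 q) = 0$, giving the minimal free resolution
\[
	\sO_{X_M} \leftarrow \sO_{\PP^3} \leftarrow 2\,\sO_{\PP^3}(-3) \leftarrow \sO_{\PP^3}(-4) \leftarrow 0,
\]
whose Betti table is the one displayed in the statement.

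Finally, if the space of degree-$3$ elements has dimension less than~$2$, the same $\mathrm{GL}_3$-reduction lets me kill two of the three minors, say $m_{01} = m_{02} = 0$; the coprimality argument then yields $q_0 = l_0 a$, $q_1 = l_1 a$, and in turn $q_2 = l_2 a$, so that $m_{12} = l_1 q_2 - l_2 q_1$ vanishes as well and $I_M = 0$. In particular there is no case of dimension exactly~$1$. The step I expect to demand the most care is the column reduction together with the bookkeeping that the shape of~$N$ and the linear independence of the~$l_i$ survive it; once one minor is killed, the common-factor computation that produces~$q$ is routine.
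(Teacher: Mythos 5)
Your proposal is correct and follows essentially the same route as the paper's proof: both bootstrap from Proposition~\ref{p3pfaffians}, use column operations to make the vanishing linear combination of minors a coordinate minor, apply coprimality of the independent linear forms in the UFD $K[e_0,\dots,e_3]$ to extract the common factor $q$, and obtain the Betti table from the resulting Hilbert--Burch/Koszul resolution of $(l_0q, l_1q)$. The only cosmetic differences are that the paper performs a row operation to display $N''$ with second row $(0,0,q_2-ml_2)$ where you factor the two surviving minors directly, and that for the final claim the paper simply notes that dependence of $l_0q$ and $l_1q$ forces $q\equiv 0$, whereas you rerun the $\mathrm{GL}_3$-reduction to kill two minors --- both are valid.
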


\begin{proof}
By Proposition~\ref{p3pfaffians}, we can assume that~$I_M$ is generated by 
the $2 \times 2$ minors of a matrix~$N$
\[
	N = 
	\begin{pmatrix}
		l_0 & l_1 & l_2 \\
		q_0 & q_1 & q_2 \\
	\end{pmatrix}
\]
with $\deg l_i = 1$, the forms $l_0,l_1,l_2$ are linearly independent, and $\deg q_i = 2$. 
By our assumptions, at least one generalized $2 \times 2$ minor of~$N$ vanishes identically. 
After suitable column operations, we can assume this minor to be the determinant of the first two columns:
\[
	l_0 q_1 - l_1 q_2 = 0 \, .
\]
This implies that there is a linear polynomial~$m$ such that $q_1 = m \, l_1$ and $q_2 = m \, l_2$. 
Substituting this into~$N$ gives
\[
	N' = 
	\begin{pmatrix}
		l_0 & l_1 & l_2 \\
		ml_0 & ml_1 & q_2 \\
	\end{pmatrix} \,.
\]
After a suitable row operation we obtain
\[
	N'' = 
	\begin{pmatrix}
		l_0 & l_1 & l_2 \\
		0 & 0 & q_2-ml_2 \\
	\end{pmatrix} \,.
\]
Since the $2 \times 2$ minors of~$N$ are the same as those of~$N''$, 
this proves the first claim with $q = q_2 - m l_2$.
By Proposition~\ref{p3pfaffians}, the $l_i$ are linearly independent, 
so if $q \not\equiv 0$, we get the claimed resolution and $X_M$ is the union of~$\{ q=0 \}$ with $\{ l_0=l_1=0 \}$. 

Assume now that $l_0q$ and $l_1q$ are not independent. Since $l_0$ and $l_1$ are linearly independent, this implies $q\equiv0$ and therefore $I_M = 0$.
\end{proof}

\begin{table}
\caption{Properties of possible curves $C$ as in Proposition~\ref{pClassificationAlgebraic}. In the cases marked by ($\ast$), the scheme $X$ must be a complete intersection of a quadric and a quartic.}
\begin{tabular}{ccc|c|cc}
\toprule
 $c$ & $d$ & $g$& Betti table of~$C$ & name \\ 
 \midrule
 $ \ge 4 $ & & &  & a plane curve \\
 \midrule
 $3$ & 3 & 0 & $\begin{matrix} 1 & - & - \\  - & 3 & 2  \end{matrix}$ & rational normal curve\\
 \midrule
 $2$ & 4 & 1 & $\begin{matrix} 1 & - & - \\  - & 2& - \\ - & - & 1  \end{matrix}$ & elliptic normal curve\\
 \midrule
 $1$ & 4 & 0 & $\begin{matrix} 1 & - & -  & - \\  - & 1 & - & - \\ - & 3 & 4 & 1  \end{matrix}$ &  $(1,3)$ on $\PP^1 \times \PP^1$ \\
 \midrule
     & 5 & 2 & $\begin{matrix} 1 & - & -  \\  - & 1 & -  \\ - & 2 & 2  \end{matrix}$ & $(2,3)$ on $\PP^1\times \PP^1$ \\
 \midrule
     & 6 & 4 & $\begin{matrix} 1 & - & -  \\  - & 1 & -  \\ - & 1 & - \\ - & - & 1  \end{matrix} $ & canonical curve\\
 \midrule
     & 5 & 0 & $\begin{matrix} 1 & - & -  & - \\  - & 1 & - & - \\ - & - & - & - \\ - & 4 & 6 & 2  \end{matrix} \quad \hfill (\ast)$ & $(1,4)$ on $\PP^1\times \PP^1$\\
\midrule
     & 6 & 3 & $\begin{matrix} 1 & - & -  & - \\  - & 1 & - & - \\ - & - & - & - \\ - & 3 & 4 & 1 \end{matrix} \hfill (\ast)$  &  $(2,4)$ on $\PP^1\times \PP^1$ \\
 \midrule
     & 7 & 6 & $\hfill \begin{matrix} 1 & - & -  \\  - & 1 & -  \\ - & - & - \\ - & 2 & 2  \end{matrix} \hfill (\ast)$ &  $(3,4)$ on $\PP^1\times \PP^1$ \\
 \midrule
     & 8 & 9 & $\hfill \begin{matrix} 1 & - & -  \\  - & 1 & -  \\ - & - & -  \\ - & 1 & - \\ - & - & 1\end{matrix} \hfill (\ast)$ & $(4,4)$ on $\PP^1\times \PP^1$ \\
\bottomrule
\end{tabular}
\label{tWithQuadrics}
\end{table} 

\begin{table}
\caption{Properties of possible curves $C$ as in Proposition~\ref{pClassificationAlgebraic}.}
\begin{tabular}{ccc|c|cc}
\toprule
$c$ & $d$ & $g$ & Betti table of~$C$ & name \\ 
 \midrule
$0$ & 5 & 0 & $\begin{matrix} 1 & - & -  & - \\  - & - & - & - \\ - & 4 & 3 & - \\ - & 1 & 2 & 1  \end{matrix}$ & rational quintic\\
 \midrule
    & 5 & 1 & $\begin{matrix} 1 & - & -  & - \\  - & - & - & - \\ - & 5 & 5 & 1  \end{matrix}$ & elliptic quintic\\
 \midrule
    & 6 & 3 & $\begin{matrix} 1 & - & -   \\  - & - & -  \\ - & 4 & 3   \end{matrix}$ & determinantal sexic \\
 \midrule
    & 6 & 2 & $\begin{matrix} 1 & - & -  & - \\  - & - & - & - \\ - & 3 & 1 & - \\ - & 1 & 3 & 1  \end{matrix}$ &  \\
 \midrule
    & 7 & 5 & $\begin{matrix} 1 & - & -   \\  - & - & -  \\ - & 3 & 1 \\ - & - & 1    \end{matrix}$ &  determinantal septic\\
 \bottomrule
 \end{tabular}
 \label{tWithoutQuadrics}
\end{table}

\begin{proposition} \label{pClassificationAlgebraic}
Let $Q_1, \dotsc, Q_c$ be linearly independent quadratic polynomials in~$K[e_0,\dots,e_3]$, 
let $M$ be a skew-symmetric matrix as above, let $P_0,\dots,P_4$ be the $4 \times 4$ Pfaffians of~$M$, 
and $F$ be a polynomial of degree~$4$. 
Let $X \subset \PP^3$ be the scheme defined by the ideal $I_X := (Q_1, \dotsc, Q_c, P_0, \dotsc, P_4, F)$ and 
$C \subset \PP^3$ be an irreducible curve of degree~$d$ and arithmetic genus~$g$.

Assume that
\begin{enumerate}[label=(\arabic*)]
\item $C$ is an irreducible component of~$X$,
\item $H^0 \bigl( I_C(2) \bigr) = \langle Q_1, \dotsc, Q_c \rangle$.
\end{enumerate}
Then the possible invariants of~$C$ are listed in Table~\ref{tWithQuadrics} and Table~\ref{tWithoutQuadrics}.
\end{proposition}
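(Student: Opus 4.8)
The plan is to run a case analysis on the integer $c = \dim H^0\bigl(I_C(2)\bigr)$, which by hypothesis (2) equals the number of independent quadrics vanishing on $C$. Two preliminary reductions come first. Since $C$ is integral and $P_0,\dots,P_3$ vanish on it, Proposition~\ref{pDeg3impliesDeg4} makes $P_4$ redundant, so $C$ is a component of $V(Q_1,\dots,Q_c,P_0,\dots,P_3,F)$. Moreover, whenever $c\ge 1$ the inclusion $C\subseteq V(Q_1,F)$ exhibits $C$ as a component of a complete intersection of a quadric and a quartic, giving the uniform bound $\deg C\le 8$. With these in hand I would appeal to the classification of irreducible space curves of small degree (collected in the Appendix), bookkept by the number of quadrics through them.

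For $c\ge 4$ I would argue that $C$ must be planar: a non-degenerate irreducible space curve satisfies $h^0\bigl(I_C(2)\bigr)\le 3$, with equality only for the twisted cubic (the unique non-degenerate curve of minimal degree~$3$), so $c\ge 4$ forces $C$ into a plane, which is the first row of Table~\ref{tWithQuadrics}. For $c=3$ and $c=2$ the net, respectively pencil, of quadrics has $C$ in its base locus; classically this base locus is the twisted cubic (rational normal curve, $(d,g)=(3,0)$), respectively the complete-intersection elliptic quartic (elliptic normal curve, $(4,1)$), and I would read off the two Betti tables directly.

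For $c=1$ the curve lies on a unique quadric $Q$, which I may assume smooth, so $Q\cong\PP^1\times\PP^1$ and $C$ has some bidegree $(a,b)$ with $1\le a\le b$. Because $V(F)$ meets $Q$ in a divisor of bidegree $(4,4)$ containing $C$, I get $a,b\le 4$; and the absence of a second quadric---which would cut a $(2,2)$-divisor through $C$---forces $a\ge 3$ or $b\ge 3$. The admissible bidegrees are then $(1,3),(1,4),(2,3),(2,4),(3,3),(3,4),(4,4)$, yielding exactly the $c=1$ rows with $(d,g)=\bigl(a+b,(a-1)(b-1)\bigr)$. The mark $(\ast)$ attaches precisely to the bidegrees with $b=4$: there every cubic through $C$ is a multiple of $Q$, so the Pfaffian cubics contribute nothing new and $X$ is forced to be the full $(2,4)$ complete intersection $V(Q,F)$. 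The quadric cone is a short separate sub-case.

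The case $c=0$ is the heart of the matter and is where the Pfaffian geometry of Section~\ref{sPfaffianEquations} enters. Now there are no product cubics $e_iQ_j$, so $C\subseteq X_M=V(P_0,\dots,P_4)$ and $X_M$ must be positive-dimensional. A Hilbert-series computation from the resolution in Proposition~\ref{pComplex} shows that the expected, codimension-three Pfaffian scheme has length $20$; hence for $X_M$ to contain a curve the cubic Pfaffians must degenerate, i.e.\ $\dim (I_M)_3\le 3$. Proposition~\ref{p2pfaffians} disposes of the value $\le 2$, where $X_M$ is a quadric surface together with a line and every curve component lies on a quadric, contradicting $c=0$; Proposition~\ref{p3pfaffians} identifies the value $3$ with the determinantal curve of degree~$7$ and genus~$5$. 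Thus $C$ is an irreducible component, lying on no quadric, of this determinantal septic, and enumerating such components via the space-curve classification leaves exactly the septic $(7,5)$, the sexics $(6,3)$ and $(6,2)$, and the quintics $(5,0)$ and $(5,1)$ of Table~\ref{tWithoutQuadrics}. I expect the two genuine obstacles to be (i) excluding a one-dimensional $X_M$ with $\dim(I_M)_3=4$, for which I would combine the length-$20$ count with the Buchsbaum--Eisenbud structure theorem for Pfaffian ideals, and (ii) certifying that each listed Betti table---and no other---actually occurs, which is exactly the bookkeeping the classification of space curves carrying no quadric is designed to handle.
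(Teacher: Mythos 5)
Your treatment of the cases $c\ge1$ follows essentially the same route as the paper and is correct up to two patchable slips. First, your bound ``$V(F)$ meets $Q$ in a $(4,4)$ divisor containing $C$'' silently assumes $F$ does not vanish identically on $Q$; the robust formulation is the paper's: since $C$ is a \emph{component} of $X$, not every generator of $I_X$ can vanish on the irreducible surface $Q$ (else $Q\subseteq X$ and $C\subsetneq Q$ would not be a component), and as all generators have degree at most $4$ one gets $a,b\le4$. Second, in the $(\ast)$ cases the phrase ``the Pfaffian cubics contribute nothing new'' only disposes of $P_0,\dots,P_3$; to conclude $X=V(Q,F)$ you must also show that the quartic Pfaffian $P_4$ vanishes on $Q$. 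This follows from Proposition~\ref{pDeg3impliesDeg4} applied to the \emph{integral surface} $Q$ (the proposition is stated for integral schemes, not only curves); your earlier application of it to $C$ makes $P_4$ redundant for containing $C$, but says nothing about the scheme $X$.

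The genuine gap is in the case $c=0$, which you rightly call the heart of the matter. Your key claim --- ``for $X_M$ to contain a curve the cubic Pfaffians must degenerate, i.e.\ $\dim(I_M)_3\le3$'' --- is false: four linearly independent cubic Pfaffians can very well have a positive-dimensional common zero locus. For instance, choosing $z_{2,3}=0$ and $z_{0,2},z_{0,3},z_{1,2},z_{1,3}$ generic in the ideal $(e_2,e_3)$ makes all four cubic Pfaffians vanish on the line $\{e_2=e_3=0\}$ while remaining independent; and more to the point, the entries $(5,0)$, $(5,1)$, $(6,3)$ of Table~\ref{tWithoutQuadrics} lie on $4$, $5$, $4$ independent cubics respectively, so nothing forces the four Pfaffians through such a curve into a $3$-dimensional span. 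Consequently your reduction ``every $c=0$ curve is a component of the determinantal septic of Proposition~\ref{p3pfaffians}'' collapses, and your proposed fix for obstacle (i) --- the length-$20$ count plus the Buchsbaum--Eisenbud structure theorem --- cannot succeed, because the configuration you want to exclude actually occurs. The paper's argument is structured differently: from $c=0$ and Proposition~\ref{p2pfaffians} one extracts only the \emph{lower} bound $h^0\bigl(I_C(3)\bigr)\ge3$ (a Pfaffian span of dimension $\le2$ would force $C$ onto a quadric or a line, or give $I_M=0$ and make $X$ a surface, all contradictions); then for $d\le6$ the decisive tool is Naito's classification of Betti tables, extended from smooth to integral nondegenerate curves in Appendix~\ref{sBetti} (this extension, including Proposition~\ref{pNoMinimal} ruling out a phantom resolution in the case $(6,2)$, is a necessary ingredient, not mere bookkeeping); and for $d\ge7$ Proposition~\ref{pSeptic} (generic initial ideals, Laudal's Lemma, Gruson--Peskine connectedness) forces $d=7$ and $h^0\bigl(I_C(3)\bigr)=3$ exactly, and only at that point does Proposition~\ref{p3pfaffians} apply and identify $C$ with the $(7,5)$ determinantal septic. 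Without these two appendix inputs your case $c=0$ does not close.
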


\begin{proof}
If $c \ge 1$, the curve~$C$ lies on an irreducible quadric hypersurface~$Q \subset \PP^3$. 
Curves on irreducible quadrics are well-known, i.e., 
they are curves of bidegree~$(a,b)$ in~$\PP^1 \times \PP^1$ if $Q \subset \PP^3$ is smooth, or 
if $Q$ is singular they are curves with the same invariants and minimal free resolutions as those of bidegree~$(a,a)$ or $(a,a+1)$ on~$\PP^1 \times \PP^1$. 
By abuse of notation, we will also say that these latter curves are of bidegree~$(a,a)$ or~$(a,a+1)$.

Since $C$ is a component of~$X$, there is at least one other generator of~$I_X$ that does not vanish on~$Q$. 
The maximal degree of generators of~$I_X$ is~$4$ so $a,b \le 4$. This gives Table~\ref{tWithQuadrics}. 

If $C$ is of type $(a,4)$, there is no other quadric than~$Q$ in the ideal~$I_X$ and all cubic Pfaffians vanish on~$Q$. 
By Proposition~\ref{pDeg3impliesDeg4} this implies that the degree~$4$ Pfaffian also vanishes on~$Q$. 
Therefore $I_X = (Q, F)$ with $F$ a nonzero polynomial. 
Since $Q$ is irreducible, this shows that $X$ is a curve of bidegree~$(4,4)$ on~$Q$.

If $c=0$ the space of degree~$3$ polynomials in~$I_C$ contains the degree~$3$ Pfaffians of~$M$. 
Now $C$ is irreducible and does not lie on a quadric hypersurface. 
By Proposition~\ref{p2pfaffians} this implies that
$h^0\bigl(I_C(3)\bigr) \ge 3$. 

Naito \cite{deg6resolutions} has classified the possible minimal free resolutions of smooth, nondegenerate space curves of degree at most~$6$. 
His theorem is also true for irreducible nondegenerate space curves (see Appendix~\ref{sBetti} for the necessary changes in his proof). 
Looking at Naito's list, we see that in degree at most~$6$ there are only four possibilities with the above restrictions. 
We have listed them as the first $4$ entries of Table~\ref{tWithoutQuadrics}. 

If $d \ge 7$ and $h^0\bigl(I_C(3)\bigr) \ge 3$ then by Proposition~\ref{pSeptic} of Appendix~\ref{sThreeCubics} we can only have $d=7$ and $h^0\bigl(I_C(3)\bigr) = 3$.
In particular, at least one Pfaffian of~$M$ vanishes identically and we are in the situation of Proposition~\ref{p3pfaffians}.
The degree~$3$ Pfaffians of~$M$ are cubics in the ideal of~$I_C$. 
Since they span a $3$-dimensional space, the degree~$3$ Pfaffians of~$M$ are exactly the cubics generating~$I_C$. 
Therefore they cut out a curve (i.e.~$C$) and, again by Proposition~\ref{p3pfaffians}, 
the curve $C$ has invariants as in the last row of Table~\ref{tWithoutQuadrics}.
\end{proof}

\section{The Classification Theorem} \label{sClassification}

Here we classify the possible space curves that appear as~$\pi(D)$ for a hexapod curve~$D \subset \SEbar$ under suitable genericity conditions.
Recall that $\pi \colon \SEbar \dashrightarrow \PP^3$ is the projection on the Euler coordinates $e_0, \dotsc, e_3$ parametrizing
the rotational part of an isometry.

\begin{remark}
\label{rGeneric}
 If an irreducible curve $D \subset \SEbar \subset \PP^{16}$ is such that 
 its span does not intersect the center of the projection $\hat{\pi} \colon \PP^{16} \dashrightarrow \PP^9$ 
 --- equivalently, if the projection on the $(A,h)$-coordinates is an isomorphism on~$\langle D \rangle$ --- 
 then the dimension of~$\langle D \rangle$ is at most~$9$, 
 making $\codim \langle D \rangle$ in~$\PP^{16}$ at least~$7$.
\end{remark}

\begin{theorem} \label{tClassification}
Let $D \subset \SEbar \subset \PP^{16}$ be an irreducible curve such that 
the span~$\langle D \rangle$ does not intersect the center of the projection $\hat{\pi} \colon \PP^{16} \dashrightarrow \PP^9$. 
Assume that $\codim \langle D \rangle = c+7$ for some nonnegative number~$c$. 
Then $\pi(D)$ is one of the curves listed in Table~\ref{tWithQuadrics} and Table~\ref{tWithoutQuadrics}.
\end{theorem}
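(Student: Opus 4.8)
The plan is to reduce Theorem~\ref{tClassification} to the purely algebraic statement of Proposition~\ref{pClassificationAlgebraic} by showing that the projected curve $C := \pi(D)$ fits into the hypotheses of that proposition, namely that $C$ is an irreducible component of a scheme $X \subset \PP^3$ cut out by an ideal of the prescribed Pfaffian-plus-quartic shape. The key observation is that the embedding $\SEbar \subset \weightedPP$ from Proposition~\ref{pEquationsWeightedPP} has its ideal generated by the $4 \times 4$ Pfaffians of the skew-symmetric $5 \times 5$ matrix $M$ together with the quartic $y_1^2+y_2^2+y_3^2-rh$, and that projecting away from the $\{e_0=e_1=e_2=e_3=0\}$ locus is precisely the map $\pi$ landing in the Euler $\PP^3$. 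So the entries $z_{i,j}$ of $M$ become, after this projection, quadratic forms in $e_0,\dots,e_3$, and the Pfaffians $P_0,\dots,P_4$ become the degree-$3$ and degree-$4$ forms that appear in Proposition~\ref{pClassificationAlgebraic}.

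\textbf{First}, I would translate the hypothesis $\codim \langle D \rangle = c+7$ into information about $C$. By Remark~\ref{rGeneric}, the assumption that $\langle D \rangle$ misses the center of $\hat\pi$ means that the projection on the $(A,h)$-coordinates restricts to an isomorphism on $\langle D \rangle$; in the weighted picture this says $\pi|_{\langle D\rangle}$ is an isomorphism onto its image, so $C=\pi(D)$ is again irreducible and $\dim\langle C\rangle = \dim\langle D\rangle$. Since the $(A,h)$-coordinates are the $2$-uple (Veronese) image of the Euler coordinates, the linear forms vanishing on $D$ restrict to quadratic forms vanishing on $C$, and one reads off that $h^0\bigl(I_C(2)\bigr)$ equals exactly $c$; this is what pins down the number $c$ of independent quadrics through $C$ and makes hypothesis~(2) of Proposition~\ref{pClassificationAlgebraic} hold by construction. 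I would set $Q_1,\dots,Q_c$ to be a basis of $H^0\bigl(I_C(2)\bigr)$.

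\textbf{Next}, I would produce the scheme $X$. Because $D \subset \SEbar$, the curve $D$ satisfies all the Pfaffian equations and the quartic equation of Proposition~\ref{pEquationsWeightedPP}. Pushing these forward under $\pi$ gives forms $P_0,\dots,P_4$ (the pulled-back Pfaffians of a matrix $M$ of the required shape, whose off-diagonal entries $z_{i,j}$ are quadrics in the $e$'s) and a quartic $F$, all vanishing on $C$. Defining $X := V\bigl(Q_1,\dots,Q_c,P_0,\dots,P_4,F\bigr)$, the curve $C$ is an irreducible component of $X$: it is contained in $X$ by the previous sentence, and it is a full-dimensional (one-dimensional) component because $D$ is a component of the corresponding intersection inside $\SEbar$ and $\pi$ is finite onto its image on $\langle D\rangle$. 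With both hypotheses of Proposition~\ref{pClassificationAlgebraic} verified, that proposition immediately yields that the invariants of $C$ appear in Table~\ref{tWithQuadrics} or Table~\ref{tWithoutQuadrics}, completing the proof.

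\textbf{The main obstacle} I anticipate is the bookkeeping that identifies the projected Pfaffian equations with a matrix $M$ of the precise skew-symmetric $5\times 5$ form assumed in Section~\ref{sPfaffianEquations}: one must check that restricting the ideal of $\SEbar \subset \weightedPP$ to the Euler $\PP^3$ really produces forms that are generalized Pfaffians of a single such $M$ (so that Propositions~\ref{p3pfaffians} and~\ref{p2pfaffians}, and hence the classification, apply), rather than some larger or differently-structured ideal. A related subtlety is verifying that $\dim\langle C\rangle = \dim\langle D\rangle$ under the genericity hypothesis so that the count $h^0\bigl(I_C(2)\bigr)=c$ is exact and not merely an inequality; this is where Remark~\ref{rGeneric} and the isomorphism of $\pi$ on $\langle D\rangle$ do the essential work, and I would make sure the non-intersection-with-the-center hypothesis is used precisely at this point.
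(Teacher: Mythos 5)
Your proposal is correct and follows essentially the same route as the paper: the paper likewise uses the missing-center hypothesis to normalize the $c+7$ linear forms cutting out $\langle D \rangle$ into $c$ forms in the $(A,h)$-coordinates alone plus seven forms of the shape $L_{c+i} - x_i, \dots, L_{c+7} - r$, which via the $2$-uple embedding become quadrics $Q_1, \dotsc, Q_c$ and relations $Q_{c+1} - x_1, \dotsc, Q_{c+7} - r$ in $\weightedPP$, and then eliminates $x_i, y_j, r$ from the equations of Proposition~\ref{pEquationsWeightedPP} so that $\pi(D) \subset X := \pi\bigl(\SEbar \cap \langle D \rangle\bigr)$ lands exactly in the situation of Proposition~\ref{pClassificationAlgebraic}. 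The ``main obstacle'' you flag resolves just as you anticipate: since the off-diagonal entries of $M$ are $\pm x_i \pm y_j$ and the last row and column are the $e_i$, the substitution preserves the skew-symmetric shape assumed in Section~\ref{sPfaffianEquations} verbatim, with the quartic becoming $F = Q_{c+4}^2 + Q_{c+5}^2 + Q_{c+6}^2 - Q_{c+7}\,h$.
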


\begin{proof}
Consider
\[
	Y := \SEbar \cap \langle D \rangle \subset \PP^{16} \, .
\]
Since, by hypothesis, $\left\langle D \right\rangle$ is cut out in~$\PP^{16}$ by $c+7$ linear forms, the variety~$Y$ is cut out in~$\weightedPP$ by $c+7$ quadrics and the equations from Proposition~\ref{pEquationsWeightedPP}. 
Since $\left\langle D \right\rangle$ does not intersect the space~$\{ A=h=0 \}$, 
the pullback of the span $\langle D \rangle$ in~$\weightedPP$ does not intersect the locus $\{ e_0 = \dots = e_3 = 0\}$.
Moreover, after possibly a change of basis, the linear forms defining $\left\langle D \right\rangle$
can be taken to be $L_1, \dotsc, L_c$, $L_{c+1} - x_1, L_{c+2} - x_2, L_{c+3} - x_3$, $L_{c+4} - y_1, L_{c+5} - y_2, L_{c+6} - y_3$, $L_{c+7} - r$, where the $L_i$ are linear polynomials in the entries of~$A$ and in~$h$.
Hence, after choosing an appropriate basis in~$\weightedPP$, the space of quadrics cutting out~$Y$ is of the form
\begin{align*}
	\bigl\langle
		& Q_1,
		\dotsc,
		Q_c,\\
		& Q_{c+1} - x_1,
		Q_{c+2} - x_2,
		Q_{c+3} - x_3,\\
		& Q_{c+4} - y_1,
		Q_{c+5} - y_2,
		Q_{c+6} - y_3,\\
		&Q_{c+7} - r
	\bigr\rangle
\end{align*}
with $Q_1,\dots,Q_{c+7}$ degree~$2$ polynomials in~$e_0,\dotsc,e_3$.

Projecting via~$\pi$ then amounts to eliminating $\{ x_i,y_j \}_{i,j}$ and~$r$ in the equations from Proposition~\ref{pEquationsWeightedPP} using the relations above. 
For $\pi(D) \subset X :=\pi(Y)$ we are then in the situation of Proposition~\ref{pClassificationAlgebraic}. 
This proves the theorem.
\end{proof}

\section{Construction} \label{sConstruction}

In this section we construct, for each curve~$C \subset \PP^3$ in Table~\ref{tWithQuadrics} and Table~\ref{tWithoutQuadrics}, 
families of curves $D \subset \SEbar$ satisfying the conditions of Theorem~\ref{tClassification}; see Theorem~\ref{tFamilies}. 
We also calculate the dimension of these families; see Table~\ref{tWithQuadricsDimensions} and Table~\ref{tWithoutQuadricsDimensions}. 
Here we work over an algebraically closed field~$K$ (e.g.\ $\CC$) unless stated otherwise.

To get a handle on this problem, we compare three types of objects for each curve $C \subset \PP^3$:
\begin{enumerate}
\item Curves $D \subset \SEbar$ such that $\pi(D) = C$. These curves are called \emph{lifts} of~$C$.
\item $1 \times 6$ matrices of quadrics in the coordinate ring $R_C$ of~$C$, subject to certain algebraic conditions.
\item Sections $\tau$ of the tangent bundle $\TPthree$ of~$\PP^3$ restricted to~$C$, subject to certain geometric conditions.
\end{enumerate}

Theorem~\ref{tConstruction} shows that, under suitable assumptions on~$C$, there is a $1:1$ correspondence between the previous three types of objects.

We start by making the concept of a lift more precise:

\begin{definition} \label{dLift}
Let $C \subset \PP^3$ be a space curve. 
A curve $D \subset \SEbar$ is called a \emph{lift of~$C$ to~$\SEbar$} 
if and only if there exists a morphism $\sigma \colon C \to \SEbar$ 
such that $\sigma(C) = D$ and the following diagram commutes:
\xycenter{
	& \SEbar \ar[d]^{\pi}\\
	C \ar[ur]^\sigma \ar@{ (->}[r] & \PP^3
}
We denote by 
\[
	\sigma^\# : R_{\weightedPP} \to R_C
\]
the associated homomorphism of graded rings. 
\end{definition}

\begin{remark} \label{rSigmaHash}
Let $C\subset \PP^3$ be a space curve and let
\[
	\sigma \colon C \to \SEbar \subset \weightedPP
\]
be a morphism defining a lift of~$C$ to~$\SEbar$.
Since $\pi \circ \sigma = \id_C$, we have $\sigma^{\#}(e_i) = e_i$ for $i=0,\dots,3$. 
From the homogeneity of~$\sigma^\#$ it follows that
\[
	\sigma^\#(x_1,x_2,x_3,y_1,y_2,y_3)
\]
is a $1 \times 6$ matrix of degree $2$ polynomials in~$R_C$. 
Notice that these quadrics correspond to $Q_{c+1}, \dotsc, Q_{c+6}$ in the proof of our Classification Theorem~\ref{tClassification}.
\end{remark}

\begin{remark}
Let $D \subset \SEbar \subset \PP^{16}$ be a lift of~$C \subset \PP^3$.
Then
\[
	\codim \bigl \langle D \bigr \rangle \ge c+7
\]
where $c$ is the number of independent quadrics in the ideal of~$C$.
Indeed $D \subset \weightedPP$ satisfies $7$ additional equations of the form
\[
	x_1-\sigma^\#(x_1), \dots, r -\sigma^\#(r),
\]
where we chose representatives of $\sigma^\#(x_1), \dotsc, \sigma^\#(r)$ in~$K[e_0, \dotsc, e_3]$. 
Any other choice of representatives differs by a quadric vanishing on~$C$, and hence also on~$D$.
Via the $2$-uple embedding $\alpha \colon \weightedPP \to \PP^{16}$, these equation become linear in the~$\PP^{16}$ containing~$D$.
\end{remark}

\begin{proposition}[Necessary conditions -- algebraic version] 
\label{pNecessaryAlgebraic}
Let $C \subset \PP^3$ be a space curve and 
$\sigma \colon C \to \SEbar \subset \weightedPP$ be a morphism defining a lift of~$C$ to~$\SEbar$. 
Consider the matrices
\[
	N_X := 
	\begin{pmatrix}
		{-{e}_{2}}&
		{e}_{1}&
		{e}_{0}\\
		{-{e}_{3}}&
		{-{e}_{0}}&
		{e}_{1}\\
		{e}_{0}&
		{-{e}_{3}}&
		{e}_{2}\\
		{e}_{1}&
		{e}_{2}&
		{e}_{3}\\
	\end{pmatrix}
	\quad\text{and}\quad
	N_Y := 
	\begin{pmatrix}
		{e}_{2}&
		{-{e}_{1}}&
		{e}_{0}\\
		{e}_{3}&
		{-{e}_{0}}&
		{-{e}_{1}}\\
		{e}_{0}&
		{e}_{3}&
		{-{e}_{2}}\\
		{e}_{1}&
		{e}_{2}&
		{e}_{3}\\
	\end{pmatrix}
\]
and set
\[
	Q_X := \sigma^\#(x_1,x_2,x_3) \quad \text{and} \quad Q_Y := \sigma^\#(y_1,y_2,y_3). 
\]
Then
\[
	(N_X | N_Y) (Q_X | Q_Y)^t \in \bigl(R_C\bigr)^4 \  \text{is the zero vector}
\]
and
\[
	\begin{pmatrix}
		N_X & 0 \\
		0 & N_Y
	\end{pmatrix}
	(Q_X | Q_Y)^t \in \bigl(R_{C^\infty}\bigr)^8 \  \text{is the zero vector,}
\]
where $C^\infty := C \cap \EulerInfty$ with its reduced scheme structure.
\end{proposition}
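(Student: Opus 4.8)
The plan is to recognize both matrix products as the pullbacks, under $\sigma^\#$, of the Pfaffian generators appearing in Proposition~\ref{pEquationsWeightedPP} and Proposition~\ref{pContactH}, and then to use that $\sigma$ maps $C$ into $\SEbar$ and $C^\infty$ into $\SEinfty$. First I would fix notation: write $\xi_i := \sigma^\#(x_i)$ and $\eta_i := \sigma^\#(y_i)$, so that $Q_X = (\xi_1,\xi_2,\xi_3)$ and $Q_Y = (\eta_1,\eta_2,\eta_3)$ are rows of degree~$2$ elements of $R_C$, while $\sigma^\#(e_i) = e_i$ by Remark~\ref{rSigmaHash}. Since $\sigma(C) \subseteq \SEbar$, the homomorphism $\sigma^\#$ factors through the homogeneous coordinate ring of $\SEbar$ and hence annihilates $I_{\SEbar}$; in particular $\sigma^\#(P_j) = 0$ in $R_C$ for each Pfaffian $P_0,\dots,P_4$ of the matrix $M$ of Proposition~\ref{pEquationsWeightedPP}.

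For the first identity I would expand $(N_X \mid N_Y)(Q_X \mid Q_Y)^t = N_X Q_X^t + N_Y Q_Y^t$ entrywise and verify that its four components coincide, after reordering, with $\sigma^\#(P_3), \sigma^\#(P_2), \sigma^\#(P_1), \sigma^\#(P_0)$, the pullbacks of the four cubic Pfaffians of $M$. For instance the last component $e_1\xi_1 + e_2\xi_2 + e_3\xi_3 + e_1\eta_1 + e_2\eta_2 + e_3\eta_3$ is exactly $\sigma^\#$ applied to $P_0 = z_{1,2}e_3 - z_{1,3}e_2 + z_{2,3}e_1$ after the substitutions $z_{1,2} = x_3+y_3$, $z_{1,3} = -x_2-y_2$, $z_{2,3} = x_1+y_1$, and the other three components match $P_1,P_2,P_3$ in the same way. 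As each $P_j$ lies in $I_{\SEbar}$, this shows $N_X Q_X^t + N_Y Q_Y^t = 0$ in $R_C$.

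For the second identity I would first note that $\sigma(C^\infty) \subseteq \SEinfty$: since $\pi \circ \sigma = \id_C$ forces $\sigma$ to preserve the $e$-coordinates, a point of $C^\infty = C \cap \{e_0^2+e_1^2+e_2^2+e_3^2=0\}$ is sent to a point of $\SEbar$ lying on $T'$, and $\SEbar \cap T' = \SEinfty$ set-theoretically by Proposition~\ref{pContactH}. Hence $\sigma^\#$ followed by restriction to the reduced scheme $C^\infty$ annihilates the ideal of $\SEinfty$. I would then check, again by a direct expansion, that the top block $N_X Q_X^t$ of $\begin{pmatrix} N_X & 0 \\ 0 & N_Y \end{pmatrix}(Q_X \mid Q_Y)^t$ consists, up to sign, of the pullbacks of the four cubic Pfaffians of the skew-symmetric $x$-matrix of Proposition~\ref{pContactH}, and the bottom block $N_Y Q_Y^t$ of those of the $y$-matrix there; for instance the last entry of $N_X Q_X^t$ equals $e_1\xi_1 + e_2\xi_2 + e_3\xi_3$, namely $\pm\sigma^\#$ of the cubic Pfaffian $e_1x_1 + e_2x_2 + e_3x_3$ obtained by deleting the first row and column of that matrix. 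Since both families of cubic Pfaffians are among the generators of the ideal of $\SEinfty$ listed in Proposition~\ref{pContactH}, they vanish on $\SEinfty \supseteq \sigma(C^\infty)$, so their pullbacks vanish on $C^\infty$, yielding the asserted zero vector in $(R_{C^\infty})^8$.

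The only genuine labor is the two entrywise identifications, which are routine but demand careful index and sign bookkeeping; the conceptual content---that these products are precisely the cubic Pfaffian generators of $\SEbar$, respectively $\SEinfty$, pulled back along the lift---makes the vanishing automatic once the matching is in place. The main point to be careful about is that it is only the containment $\sigma(C^\infty) \subseteq \SEinfty$, rather than an equality of schemes, that is available and needed, and that the reducedness of $C^\infty$ is what lets us pass from vanishing at its points to vanishing in $R_{C^\infty}$.
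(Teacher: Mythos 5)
Your proof is correct and takes essentially the same route as the paper's: both identify $(N_X \mid N_Y)(x,y)^t$ with the four cubic Pfaffians of the matrix $M$ from Proposition~\ref{pEquationsWeightedPP} and the block product with the two sets of cubic Pfaffians generating the ideal of~$\SEinfty$ from Proposition~\ref{pContactH}, and then apply $\sigma^\#$, restricting to the reduced scheme~$C^\infty$ for the second condition. Your added details --- the explicit ordering $(P_3,P_2,P_1,P_0)$, the sign bookkeeping, and the argument that $\pi \circ \sigma = \id_C$ forces $\sigma(C^\infty) \subseteq \SEbar \cap T' = \SEinfty$ --- merely spell out what the paper compresses into ``a straightforward calculation.''
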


\begin{proof}
Set $x := (x_1,x_2,x_3)$ and $y := (y_1,y_2,y_3)$. 
Let $P_0,\dots,P_3$ be the degree~$3$ Pfaffians in the ideal of~$\SEbar \subset \weightedPP$ 
defined in Proposition~\ref{pEquationsWeightedPP}. 
A straightforward calculation shows that
\[
	(N_X | N_Y) (x,y)^t = (P_0,P_1,P_2,P_3)^t.
\]
Applying $\sigma^\#$ to this equation gives the first necessary condition.

Each reduced point of~$C^{\infty}$ must be mapped to the border~$\SEinfty$ of~$\SEbar$. 
By Proposition~\ref{pContactH}, the ideal of~$\SEinfty$ contains two sets of four degree~$3$ Pfaffians. 
As above, a straightforward calculation shows that we can write these Pfaffians as
\[
	\begin{pmatrix}
		N_X & 0 \\
		0 & N_Y
	\end{pmatrix}
	(x ,y)^t = (P_{0,0}, \dotsc, P_{0,3}, P_{1,0}, \dotsc, P_{1,3})^t \,.
\]
Over each reduced point of~$C \cap \EulerInfty$ the application of~$(\sigma|_{C^\infty})^\#$
to these equations must vanish. This gives the second necessary condition.
\end{proof}

\begin{remark}
Notice that the above necessary conditions are linear in the coefficients of~$(Q_X | Q_Y)$.
\end{remark}

We will now give a geometric interpretation of these conditions. 
The key point is the following

\begin{lemma} \label{lKoszul}
After a suitable change of basis, the $4 \times 6$ matrix $(N_X | N_Y)$ appears in the Koszul complex associated
to $(e_0,\dots,e_3)$ as follows
\[
	0 \to \sOPthree \to 4 \sOPthree(1) \to 6 \sOPthree(2) \xrightarrow{(N_X | N_Y)} 4 \sOPthree(3) \to \sOPthree(4) \to 0.
\]
In particular we have an exact sequence
\[
	0 \to \TPthree \to 6 \sOPthree(2) \xrightarrow{(N_X | N_Y)} 4 \sOPthree(3) \to \sOPthree(4) \to 0
\]
where $\TPthree$ is the tangent bundle on~$\PP^3$.
\end{lemma}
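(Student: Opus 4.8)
The plan is to recognize the matrix $(N_X \mid N_Y)$ as (up to a change of basis) one of the standard differentials in the Koszul complex of the regular sequence $(e_0,e_1,e_2,e_3)$ on $\PP^3$. Recall that the Koszul complex resolving $\sOPthree$ against this sequence reads
\[
	0 \to \sOPthree(-4) \xrightarrow{d_4} 4\sOPthree(-3) \xrightarrow{d_3} 6\sOPthree(-2) \xrightarrow{d_2} 4\sOPthree(-1) \xrightarrow{d_1} \sOPthree \to 0,
\]
where the maps are the usual exterior multiplications by $(e_0,\dots,e_3)$. Dualizing (equivalently, twisting by $\sOPthree(4)$ and reversing, since the Koszul complex is self-dual up to twist), the middle map $d_3$ becomes a $4 \times 6$ matrix of linear forms $6\sOPthree(2) \to 4\sOPthree(3)$. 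The first step is therefore to write out this self-duality explicitly and identify the resulting $4 \times 6$ matrix of linear forms.

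The second step is a direct comparison: I would check that, after reindexing the six basis vectors $e_i \wedge e_j$ (equivalently, after a permutation-with-signs of the six columns and a choice of orientation) and a matching relabeling of the four rows, the entries of the dual Koszul differential agree exactly with those written in $N_X$ and $N_Y$ in Proposition~\ref{pNecessaryAlgebraic}. Concretely, each column of $(N_X \mid N_Y)$ should correspond to a wedge $e_i \wedge e_j$ and its three nonzero entries to the three $e_k$ with $k \notin \{i,j\}$, decorated with the Koszul signs; this is exactly the shape of the two $4 \times 3$ blocks displayed. The phrase ``after a suitable change of basis'' in the statement is precisely the freedom to fix these signs and orderings, so this step is a bookkeeping verification rather than a genuine computation.

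Once the four-term sequence
\[
	0 \to \sOPthree \to 4\sOPthree(1) \to 6\sOPthree(2) \xrightarrow{(N_X \mid N_Y)} 4\sOPthree(3) \to \sOPthree(4) \to 0
\]
is identified with the (twisted) Koszul complex, its exactness is immediate, since $(e_0,\dots,e_3)$ is a regular sequence generating an ideal of the expected codimension and the Koszul complex of a regular sequence is exact. This gives the first displayed assertion. For the second assertion I would simply break off the last three terms: exactness at $6\sOPthree(2)$ means the kernel of $(N_X \mid N_Y)$ equals the image of the incoming map $4\sOPthree(1) \to 6\sOPthree(2)$, whose cokernel into $6\sOPthree(2)$ fits into $0 \to \sOPthree \to 4\sOPthree(1) \to \TPthree \to 0$, the Euler sequence twisted by $\sOPthree(2)$. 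Thus the kernel of $(N_X \mid N_Y)$ is exactly $\TPthree$, yielding the exact sequence
\[
	0 \to \TPthree \to 6\sOPthree(2) \xrightarrow{(N_X \mid N_Y)} 4\sOPthree(3) \to \sOPthree(4) \to 0.
\]

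The main obstacle, such as it is, lies entirely in the first two steps: correctly pinning down the signs and the orientation conventions so that the abstract dual Koszul differential matches the two explicit blocks $N_X$ and $N_Y$ letter for letter. Once that identification is secured, everything else follows formally from standard properties of the Koszul and Euler complexes, with no further computation required.
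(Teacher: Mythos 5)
Your second half is sound and coincides with the paper's own argument: once the four-term sequence is identified with (a twist of) the Koszul complex of the regular sequence $(e_0,\dots,e_3)$, exactness is standard, and splitting it at $6\sOPthree(2)$ identifies the kernel of $(N_X \mid N_Y)$ with the cokernel of $\sOPthree \to 4\sOPthree(1)$, which is $\TPthree$ by the Euler sequence (note in passing that $0 \to \sOPthree \to 4\sOPthree(1) \to \TPthree \to 0$ is the Euler sequence itself, not a twist of it). The genuine gap is in your first two steps, which you dismiss as ``a bookkeeping verification rather than a genuine computation.'' Your description of the matching is wrong in a way that makes the proposed verification fail outright. In the standard monomial basis $\{e_i \wedge e_j\}$, the middle $4 \times 6$ Koszul differential $6\sOPthree(2) \to 4\sOPthree(3)$ --- whether you realize it directly as $\Lambda^2 \to \Lambda^1$ or as the twisted dual of $\Lambda^3 \to \Lambda^2$ --- has exactly \emph{two} nonzero entries in each column, namely $\pm e_i$ and $\pm e_j$ in the column indexed by $e_i \wedge e_j$; your claimed shape (three entries $e_k$ with $k \notin \{i,j\}$) is that of the $6 \times 4$ matrix $\Lambda^3 \to \Lambda^2$, which has the wrong format. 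Meanwhile, every column of $N_X$ and of $N_Y$ has \emph{four} nonzero entries (e.g.\ the first column of $N_X$ is $(-e_2,-e_3,e_0,e_1)^t$). Since the number of nonzero entries in a column is invariant under permutations-with-signs of rows and columns, no reindexing of the six wedges and relabeling of the four rows, with any choice of signs, can carry the Koszul differential onto $(N_X \mid N_Y)$.

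The ``suitable change of basis'' in the statement is therefore a genuine pair of constant invertible matrices in $GL_6 \times GL_4$ acting on $6\sOPthree(2)$ and $4\sOPthree(3)$, mixing the monomial basis vectors nontrivially ($N_X$ and $N_Y$ are built from left and right quaternion multiplication by $e_0 + e_1 i + e_2 j + e_3 k$, not from the exterior-algebra basis). Exhibiting this equivalence, or otherwise verifying directly that $(N_X \mid N_Y)$ is $GL$-equivalent to the Koszul matrix, is precisely the content of the first claim; the paper settles it by a direct calculation carried out with computer algebra. So your proposal correctly reduces the lemma to this identification and correctly handles everything downstream of it, but the one step carrying the actual content is justified by an argument that does not work as stated and would need to be replaced by an explicit computation.
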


\begin{proof}
The first claim is an easy calculation, which for example can be done via computer algebra. 
The second claim follows since the kernel~$\sK$ of~$(N_X | N_Y)$ 
is the same as the cokernel of the first map in the complex, i.e.
\[
0 \to \sOPthree \to 4 \sOPthree(1) \to \sK \to 0.
\]
Now the first map is the same as in the Euler sequence on~$\PP^3$ and 
therefore $\sK$ must be the tangent bundle~$\TPthree$.
\end{proof}

\begin{corollary} \label{cMatrixToSection}
Let $C \subset \PP^3$ be a subvariety. 
Then we have a $1:1$ correspondence between 
matrices of $6$ quadrics $(Q_X | Q_Y)$ with
\[
	(N_X | N_Y) (Q_X | Q_Y)^t \in \bigl(R_C\bigr)^4 \  \text{is the zero vector}
\]
and sections $\tau \in H^0(\TPthree|_C)$ via the following diagram
\xycenter{
	&& \sO_C \ar@{-->}[dl] _{\tau} \ar[d]^{(Q_X | Q_Y)^t}\\
	0 \ar[r] &\TPthree|_C \ar[r] & 6\sO_C(2) \ar[rr]^{(N_X | N_Y)} && 4 \sO_C(3).
}
\end{corollary}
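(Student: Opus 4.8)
The plan is to realize both sides of the correspondence as $\sO_C$-linear maps out of~$\sO_C$ and to match them through the universal property of a kernel. First I would note that a $1 \times 6$ matrix of quadrics $(Q_X | Q_Y)$ on~$C$ is exactly an $\sO_C$-linear map $\sO_C \to 6\sO_C(2)$, sending the generator~$1$ to the column $(Q_X | Q_Y)^t$, whereas a section $\tau \in H^0(\TPthree|_C)$ is exactly an $\sO_C$-linear map $\sO_C \to \TPthree|_C$. The object linking the two is the sequence of Lemma~\ref{lKoszul}, restricted to~$C$.

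The key step is to verify that restricting the first three terms of that sequence to~$C$ preserves left exactness, i.e.\ that
\[
0 \to \TPthree|_C \to 6\sO_C(2) \xrightarrow{(N_X | N_Y)} 4\sO_C(3)
\]
is exact with $\TPthree|_C = \ker\bigl((N_X | N_Y)\bigr)$. On~$\PP^3$ the four-term sequence $0 \to \TPthree \to 6\sOPthree(2) \to 4\sOPthree(3) \to \sOPthree(4) \to 0$ of Lemma~\ref{lKoszul} splits into two short exact sequences $0 \to \TPthree \to 6\sOPthree(2) \to \mathcal{C} \to 0$ and $0 \to \mathcal{C} \to 4\sOPthree(3) \to \sOPthree(4) \to 0$, where $\mathcal{C}$ is the image of $(N_X | N_Y)$. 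Since $\PP^3$ is smooth and $\sOPthree(4)$ is locally free, $\mathcal{C}$ is the kernel of a surjection of vector bundles and hence itself locally free, so all sheaves appearing are locally free. Tensoring each short exact sequence with~$\sO_C$ therefore produces no $\mathrm{Tor}_1$ obstruction, and both remain exact; splicing them recovers the displayed left-exact sequence and identifies $\TPthree|_C$ with the kernel of $(N_X | N_Y)$ on~$C$.

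Once this is in place, the correspondence is formal. Given $(Q_X | Q_Y)$ with $(N_X | N_Y)(Q_X | Q_Y)^t = 0$ in $(R_C)^4$, the composite $\sO_C \to 6\sO_C(2) \xrightarrow{(N_X | N_Y)} 4\sO_C(3)$ vanishes, so the map $\sO_C \to 6\sO_C(2)$ factors uniquely through the kernel $\TPthree|_C$; this factorization is the section $\tau \in H^0(\TPthree|_C)$. Conversely, composing any $\tau \colon \sO_C \to \TPthree|_C$ with the inclusion $\TPthree|_C \hookrightarrow 6\sO_C(2)$ yields a column $(Q_X | Q_Y)^t$ killed by $(N_X | N_Y)$. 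Uniqueness of the factorization makes these two assignments mutually inverse, which is precisely the commutativity of the triangle in the displayed diagram.

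The only genuine subtlety — and the step I would watch most carefully — is this left-exactness after restriction: one cannot merely tensor the four-term sequence with~$\sO_C$ and assume exactness survives, but must route the argument through the local freeness of the intermediate sheaf~$\mathcal{C}$ so that the relevant $\mathrm{Tor}_1$ groups vanish. Everything else reduces to a diagram chase with the universal property of the kernel.
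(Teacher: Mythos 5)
Your proposal is correct and follows essentially the same route as the paper: restrict the Koszul-derived sequence of Lemma~\ref{lKoszul} to~$C$, use that all terms are locally free so exactness survives tensoring with~$\sO_C$, and then conclude by the universal property of the kernel. The only difference is that you spell out the paper's one-line claim about preserved exactness via the splitting into two short exact sequences and the vanishing of~$\mathrm{Tor}_1$, which is precisely the justification the paper leaves implicit.
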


\begin{proof}
Since the sequences of Lemma~\ref{lKoszul} are sequences of vector bundles, 
they remain exact after tensoring with $\sO_C$. 
This shows that the bottom row of the above diagram is exact.

For each~$\tau$ we obtain a $(Q_X | Q_Y)^t$ by composing with $\TPthree \to 6\sO(2)$. 
It satisfies $(N_X | N_Y) (Q_X | Q_Y)^t = 0$ since the sequence at the bottom is exact.

Conversely, a map defined by~$(Q_X | Q_Y)^t$ lifts to a section~$\tau$ if it maps to the kernel of~$(N_X | N_Y)$,
i.e., if it satisfies
$
	(N_X | N_Y) (Q_X | Q_Y)^t = 0.
$
\end{proof}

\begin{proposition}[Necessary conditions -- geometric version] 
\label{pNecessaryGeometric}
Let $C \subset \PP^3$ be a space curve. 
There is a $1:1$ correspondence between 
matrices $(Q_X | Q_Y)$ satisfying the two algebraic conditions in Proposition~\ref{pNecessaryAlgebraic} and 
sections $\tau \in H^0(\TPthree|_C)$ that lift to a section~$\tau'$ of~$T_{\EulerInfty}$ over~$C^\infty$ 
via the normal bundle sequence of~$\EulerInfty$ restricted to~$C^\infty$:
\xycenter{
	&& \sO_{C^\infty} \ar@{-->}[dl] _{\tau'} \ar[d]^{\tau}\\
	0 \ar[r] &T_{\EulerInfty}|_{C^\infty} \ar[r] & \TPthree|_{C^{\infty}} \ar[r] & N_{\EulerInfty/\PP^3}|_{C^\infty} \ar[r] & 0 \,.
}
\end{proposition}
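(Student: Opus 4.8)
The plan is to leverage the matrix/section dictionary of Corollary~\ref{cMatrixToSection} and to build, over the border, a parallel dictionary for $T_{\EulerInfty}$. By Corollary~\ref{cMatrixToSection} the first algebraic condition of Proposition~\ref{pNecessaryAlgebraic}, namely $(N_X | N_Y)(Q_X | Q_Y)^t = 0$ over $C$, is equivalent to the datum of a section $\tau \in H^0(\TPthree|_C)$, and under this equivalence the composite $\sO_C \xrightarrow{\tau} \TPthree|_C \hookrightarrow 6\sO_C(2)$ is precisely $(Q_X | Q_Y)^t$. Restricting to $C^\infty$ gives the vertical map $\tau$ of the stated diagram, so it suffices to prove that, under this dictionary, the second algebraic condition is equivalent to the liftability of $\tau|_{C^\infty}$ to a section of $T_{\EulerInfty}$, i.e. to the vanishing of the composite $\sO_{C^\infty} \to N_{\EulerInfty/\PP^3}|_{C^\infty}$.

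First I would simplify the second condition. Since the first condition yields $N_X Q_X^t + N_Y Q_Y^t = 0$ on all of $C$, hence on $C^\infty$, the block-diagonal equation
\[
	\begin{pmatrix} N_X & 0 \\ 0 & N_Y \end{pmatrix}(Q_X | Q_Y)^t = 0 \qquad \text{on } C^\infty
\]
holds if and only if $N_X Q_X^t = 0$ on $C^\infty$ (which then forces $N_Y Q_Y^t = 0$ as well). Thus the second condition amounts to the single vanishing $N_X Q_X^t = 0$ along $C^\infty$.

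The geometric content is a border analogue of Lemma~\ref{lKoszul}: I claim that over $\EulerInfty$ one has
\[
	T_{\EulerInfty} \;=\; \ker\begin{pmatrix} N_X & 0 \\ 0 & N_Y \end{pmatrix} \;=\; \ker N_X|_{\EulerInfty} \oplus \ker N_Y|_{\EulerInfty},
\]
viewed as a rank-$2$ subbundle of $\TPthree|_{\EulerInfty} = \ker(N_X | N_Y)$. The inclusion of this kernel into $\TPthree|_{\EulerInfty}$ is immediate, since $N_X Q_X^t = 0 = N_Y Q_Y^t$ forces $(N_X | N_Y)(Q_X | Q_Y)^t = 0$. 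That the kernel has rank~$2$ is a direct evaluation along $\EulerInfty$: both $N_X$ and $N_Y$ drop rank there (consistently with the identity $(e_0,e_1,e_2,e_3)\,N_X = (0,0,e_0^2+e_1^2+e_2^2+e_3^2)$, whose right-hand side vanishes on $\EulerInfty$), so that $\ker N_X|_{\EulerInfty}$ and $\ker N_Y|_{\EulerInfty}$ are the line bundles given by the two rulings of the smooth quadric $\EulerInfty \cong \PP^1 \times \PP^1$. The hard part is to confirm that this particular rank-$2$ subbundle is $T_{\EulerInfty}$ and not some other subbundle of $\TPthree|_{\EulerInfty}$: since the ranks already agree, it suffices to check that the kernel is contained in $T_{\EulerInfty}$, i.e. that these ruling directions are genuinely tangent to $\EulerInfty$. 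This is exactly where the two Pfaffian structures of the border $\SEinfty$ from Proposition~\ref{pContactH} enter, and I would settle it by a direct computation, just as the analogous identification of the kernel with $\TPthree$ is carried out in Lemma~\ref{lKoszul}.

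Granting this identification, the proof concludes formally. Restricting the equality $T_{\EulerInfty} = \ker\left(\begin{smallmatrix} N_X & 0 \\ 0 & N_Y\end{smallmatrix}\right)$ to $C^\infty$, the exact sequence in the diagram shows that the section $\tau|_{C^\infty}$, represented by $(Q_X | Q_Y)^t$, lifts to a section $\tau'$ of $T_{\EulerInfty}$ precisely when it factors through this kernel, i.e. when the block-diagonal matrix annihilates $(Q_X | Q_Y)^t$ along $C^\infty$ --- which, by the reduction above, is exactly the second algebraic condition. As the correspondence of Corollary~\ref{cMatrixToSection} is already bijective, restricting it to the sub-collections cut out on each side by this equivalent condition produces the desired $1:1$ correspondence.
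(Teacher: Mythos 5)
Your proposal is correct and takes essentially the same route as the paper: the crux in both is identifying $T_{\EulerInfty}$, over $\EulerInfty \cong \PP^1 \times \PP^1$, with the kernel of the block-diagonal matrix $\left(\begin{smallmatrix} N_X & 0 \\ 0 & N_Y \end{smallmatrix}\right)$ --- with $\ker N_X \cong \sOPii(2,0)$ and $\ker N_Y \cong \sOPii(0,2)$ the two rulings, which is exactly the computer-algebra-verified exact sequence $0 \to T_{\PP^1 \times \PP^1} \to 6\,\sOPii(2,2) \to 8\,\sOPii(3,3)$ that the paper uses --- followed by the same Corollary~\ref{cMatrixToSection}-style lifting argument; the direct computation you defer is precisely the step the paper delegates to a computer algebra system. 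Your reduction of the block-diagonal condition on $C^\infty$ to $N_X Q_X^t = 0$ (given the first condition) is the same compatibility fact the paper records as ``on $C^{\infty}$ the second algebraic condition implies the first,'' so nothing essential is missing.
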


\begin{proof}
Since $K$ is algebraically closed, we have an isomorphism
\[
	\PP^1 \times \PP^1 \cong \EulerInfty \subset \PP^3 \,.
\]
Over this $\PP^1 \times \PP^1$ we then have an exact sequence
\[
	0 
	\to
	T_{\PP^1 \times \PP^1}
	 \to 
	6 \sOPii(2,2) 
	\xrightarrow{
		\begin{pmatrix}
			N_X & 0 \\
			0 & N_Y
		\end{pmatrix}
	}
	8 \sOPii(3,3) \,,
\]
which can be checked with a computer algebra program. 
In fact, notice that $T_{\PP^1 \times \PP^1} \cong \sO_{\PP^1 \times \PP^1}(2,0) \oplus \sO_{\PP^1 \times \PP^1}(0,2)$,
since $T_{\PP^1 \times \PP^1} \cong \rho_1^{\ast} T_{\PP^1} \oplus \rho_2^{\ast} T_{\PP^1}$, 
where $\rho_i \colon \PP^1 \times \PP^1 \longrightarrow \PP^1$ is the projection on the $i$-th component.
Then, by parametrizing $\EulerInfty$ via a map $K[s_0, s_i] \otimes K[t_0, t_i] \longrightarrow K[e_0, e_1, e_2, e_3]$,
one sees via a direct computer algebra computation that the kernel of~$N_X$ is isomorphic to $\sO_{\PP^1 \times \PP^1}(2,0)$,
while the kernel of~$N_Y$ is isomorphic to $\sO_{\PP^1 \times \PP^1}(0,2)$.

Now we restrict the previous exact sequence to~$C^{\infty}$ and we consider the following diagram:
\xycenter{
	&& \sO_{C^{\infty}} \ar@{-->}[dl] _{\tau'} \ar[d]^{(Q_X | Q_Y)^t} \\
	0 \ar[r] & T_{\EulerInfty}|_{C^{\infty}} \ar[r] & 6\sO_{C^{\infty}}(2) \ar[rr]_{\begin{pmatrix}
	N_X & 0 \\
	0 & N_Y
	\end{pmatrix}} && 8 \sO_{C^{\infty}}(3)
}
Arguing as in the proof of Corollary~\ref{cMatrixToSection} we see that 
the second algebraic condition in Proposition~\ref{pNecessaryAlgebraic} is equivalent to the existence of the section~$\tau'$.

We are left to check that the sections~$\tau$ and~$\tau'$ are compatible in the normal bundle sequence of the statement. 
This is true because on~$C^{\infty}$ the second algebraic condition in Proposition~\ref{pNecessaryAlgebraic} implies the first one, hence we have the diagram:
\xycenter{
	& 0 \ar[d] \\
	0 \ar[r] & T_{\EulerInfty}|_{C^{\infty}} \ar[d] \ar[r] & 6\sO_{C^{\infty}}(2) \ar[d]^{=} \ar[rr]^{\begin{pmatrix}
	N_X & 0 \\
	0 & N_Y
	\end{pmatrix}} && 8 \sO_{C^{\infty}}(3) \\
	0 \ar[r] &\TPthree|_{C^{\infty}} \ar[d] \ar[r] & 6\sO_{C^{\infty}}(2) \ar[rr]^{(N_X | N_Y)} && 4 \sO_{C^{\infty}}(3) \\
	& N_{\EulerInfty/\PP^3}|_{C^\infty} \ar[d] \\
	& 0
}
So indeed $\tau'$ is a lift of~$\tau$ as claimed.

Conversely, if $\tau$ can be lifted to~$\tau'$, then the matrix~$(Q_X | Q_Y)$ satisfies the second algebraic condition in Proposition~\ref{pNecessaryAlgebraic}.
\end{proof}

Surprisingly, the conditions above are often also sufficient:

\begin{theorem} \label{tConstruction}
Let $C \subset \PP^3$ be a space curve satisfying the following conditions:
\begin{enumerate}
\item[(A)] $C$ is quadratically normal, i.e., the natural map $H^0 \bigl( \sOPthree(2) \bigr) \to H^0 \bigl( O_C(2) \bigr)$ is surjective;
\item[(B)] the intersection multiplicity of~$C \cap \EulerInfty$ is at most~$2$ at each reduced point. 
\end{enumerate}
Then we have a $1:1$ correspondence between
\begin{enumerate}
\item \label{item:lifts} lifts $D \subset \SEbar$ as in Definition~\ref{dLift}, 
\item \label{item:matrices} matrices $(Q_1,\dots,Q_6)$ of quadrics in~$R_C$ satisfying the conditions of Proposition~\ref{pNecessaryAlgebraic}, and
\item \label{item:sections} sections $\tau$ of~$\TPthree|_C$ satisfying the conditions of Proposition~\ref{pNecessaryGeometric}.
\end{enumerate}
\end{theorem}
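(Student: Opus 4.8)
The plan is to reduce the three-way correspondence to a single new bijection, since the equivalence between objects of type~(\ref{item:matrices}) and type~(\ref{item:sections}) is exactly Proposition~\ref{pNecessaryGeometric} and uses neither hypothesis~(A) nor~(B). Composing that equivalence with a bijection between lifts~(\ref{item:lifts}) and matrices~(\ref{item:matrices}) then yields the full statement, so the only new content is the lift--matrix correspondence. I would first dispatch the direction from~(\ref{item:lifts}) to~(\ref{item:matrices}): given a lift $\sigma\colon C\to\SEbar$, Remark~\ref{rSigmaHash} produces the $1\times 6$ matrix $(Q_X|Q_Y)=\sigma^\#(x_1,\dots,y_3)$ of quadrics in~$R_C$, and Proposition~\ref{pNecessaryAlgebraic} shows that it satisfies the two required conditions. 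This assignment is injective, because $\sigma^\#$ is determined by $\sigma^\#(e_i)=e_i$, by the entries of $(Q_X|Q_Y)$, and by $\sigma^\#(r)$, and the last is forced by the quartic relation $rh=y_1^2+y_2^2+y_3^2$ together with the fact that $h$ is a non-zero-divisor in~$R_C$ (as $C\not\subset\EulerInfty$).

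The substance lies in the reverse direction. Given a matrix $(Q_X|Q_Y)$ satisfying the conditions of Proposition~\ref{pNecessaryAlgebraic}, I would define a graded ring homomorphism $\sigma^\#\colon R_{\weightedPP}\to R_C$ on generators by $e_i\mapsto e_i$, $x_i\mapsto Q_{X,i}$, $y_i\mapsto Q_{Y,i}$, and $r\mapsto\rho$, where $\rho:=(Q_{Y,1}^2+Q_{Y,2}^2+Q_{Y,3}^2)/h$, and then check that $\sigma^\#$ annihilates the ideal of~$\SEbar$ from Proposition~\ref{pEquationsWeightedPP}. The degree~$3$ Pfaffians $P_0,\dots,P_3$ map to the entries of $(N_X|N_Y)(Q_X|Q_Y)^t$, which vanish by the first algebraic condition; the degree~$4$ Pfaffian~$P_4$ then vanishes automatically by Proposition~\ref{pDeg3impliesDeg4}, since $C$ is integral and the cubic Pfaffians of the pulled-back matrix vanish on it; and the quartic $y_1^2+y_2^2+y_3^2-rh$ maps to~$0$ by the very definition of~$\rho$. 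As the $e_i$ have no common zero on~$C$, the map $\sigma^\#$ defines a morphism $\sigma\colon C\to\SEbar$ with $\pi\circ\sigma=\id_C$, i.e.\ a lift, and by construction this inverts the assignment of the previous paragraph.

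The one step that is not formal, and which I expect to be the main obstacle, is showing that $\rho$ is a well-defined element of~$(R_C)_2$; equivalently, that $h$ divides $\Phi:=Q_{Y,1}^2+Q_{Y,2}^2+Q_{Y,3}^2$ in~$R_C$. Here both hypotheses enter. The form $\Phi$ is the image of the degree~$4$ Pfaffian of the matrix~$M_Y$ of Proposition~\ref{pContactH}; applying Proposition~\ref{pDeg3impliesDeg4} on~$\EulerInfty$ shows that the kernel of~$N_Y$ along~$\EulerInfty$ is spanned by vectors isotropic for the quadratic form $y_1^2+y_2^2+y_3^2$, so that $\Phi$ vanishes on the reduced scheme~$C^\infty$ by the second algebraic condition (equivalently, because $Q_Y|_{C^\infty}$ lies in $\ker N_Y\cong\sOPii(0,2)$). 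To obtain divisibility by~$h$ I must upgrade this to vanishing of~$\Phi$ on the full divisor $\operatorname{div}_C(h)=C\cap\EulerInfty$. This is where condition~(B) is used: it bounds the length of this divisor by~$2$ at each point, while the contact of the lifted section~$\tau'$ with~$\EulerInfty$ (the tangency recorded in Proposition~\ref{pNecessaryGeometric}) forces~$\Phi$ to vanish to the corresponding order at each such point. Once $\Phi\in(h)$ as sections on~$C$, the quadratic normality hypothesis~(A) guarantees that the quotient $\Phi/h\in H^0(\sO_C(2))$ is the image of an honest quadric, so that $\rho\in(R_C)_2$ and $\sigma^\#$ is a genuine graded homomorphism. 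Verifying this second-order vanishing at the contact points is the technical heart of the argument.
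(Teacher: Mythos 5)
Your architecture coincides with the paper's: you dispose of the equivalence (\ref{item:matrices})$\Leftrightarrow$(\ref{item:sections}) by Proposition~\ref{pNecessaryGeometric}, obtain (\ref{item:lifts})$\Rightarrow$(\ref{item:matrices}) from Remark~\ref{rSigmaHash} and Proposition~\ref{pNecessaryAlgebraic}, and correctly reduce (\ref{item:matrices})$\Rightarrow$(\ref{item:lifts}) to the divisibility of $\Phi := Q_4^2+Q_5^2+Q_6^2$ by $h$ in~$R_C$, with Proposition~\ref{pDeg3impliesDeg4} and integrality handling the quartic Pfaffian and the relation $y_1^2+y_2^2+y_3^2 = rh$ determining $\sigma^\#(r)$ --- all exactly as in the paper. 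Your placement of hypothesis~(A), namely to descend $\Phi/h$ from $H^0\bigl(\sO_C(2)\bigr)$ to~$(R_C)_2$, is a legitimate and in fact more explicit accounting than the paper's, whose written proof never says where~(A) enters.

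However, the step you yourself flag as ``the technical heart'' is genuinely missing, and the gloss you offer for it would not survive being written out. At a length-$2$ point $p+\epsilon p'$ of $C \cap \EulerInfty$, the first-order vanishing $\Phi(p)=0$ does follow, as you say, from the second algebraic condition (kernel vectors of~$N_Y$ over~$\EulerInfty$ are isotropic). The second-order claim is the vanishing of the $\epsilon$-coefficient $q_4q_4'+q_5q_5'+q_6q_6'$, where $q_j+\epsilon q_j' = Q_j(p+\epsilon p')$, and the available data are heterogeneous: the first condition holds on all of~$C$ and can be expanded at $p+\epsilon p'$, but its $\epsilon$-part mixes the $N_X$- and $N_Y$-blocks of the combined $4\times 6$ matrix; the second condition holds only on the \emph{reduced} scheme~$C^\infty$, so it constrains $q$ at~$p$ and says nothing about~$q'$; and one further knows $\langle p,p\rangle = \langle p,p'\rangle = 0$. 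Appealing to ``the contact of the lifted section~$\tau'$'' cannot close this gap: $\tau'$ exists only over the reduced~$C^\infty$ (Proposition~\ref{pNecessaryGeometric} records no second-order data at the double points), so it gives no handle on~$q'$. The paper closes exactly this hole by a non-obvious ideal-membership computation: treating the $p_i, p_i', q_j, q_j'$ as free variables, it verifies with computer algebra that the ideal~$I$ generated by the three conditions contains $p_i^2$ times both coefficients of $\Phi(p+\epsilon p')$ for every~$i$, the factor~$p_i^2$ being harmless since some coordinate of~$p$ is nonzero. Until you either reproduce that verification or supply a conceptual replacement, the implication (\ref{item:matrices})$\Rightarrow$(\ref{item:lifts}) --- and with it the theorem --- remains unproved.
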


\begin{proof}
We have seen so far that \eqref{item:lifts} implies \eqref{item:matrices} by Proposition~\ref{pNecessaryAlgebraic}, 
and \eqref{item:matrices} is equivalent to \eqref{item:sections} by Proposition~\ref{pNecessaryGeometric}.

It remains therefore to prove that each matrix $(Q_X | Q_Y) =: (Q_1,\dots,Q_6)$ 
satisfying the conditions of Proposition~\ref{pNecessaryAlgebraic} gives rise to a lift~$D$ of~$C$. 
To do this, we construct the corresponding graded ring homomorphism~$\sigma^\#$. 
More precisely, we need to find a quadric~$Q_7 \in R_C$ such that the image~$D$ of the map~$\sigma \colon C \to \weightedPP$ defined by
\[
	\sigma^\#(e_0, \dotsc, e_3, x_1, x_2, x_3, y_1, y_2, y_3, r) := (e_0, \dotsc, e_3, Q_1, \dotsc, Q_6, Q_7) ,
\]
with $Q_1,\dots,Q_6$ as above, is inside~$\SEbar$. 
By the proof of Proposition~\ref{pNecessaryAlgebraic}, we know that the first condition of Proposition~\ref{pNecessaryAlgebraic} 
is equivalent to the fact that all the four degree~$3$ Pfaffians in the ideal of~$\SEbar$ vanish on~$D$. 
(Notice that these equations do not involve the variable~$r$). 
Since $D$ is integral, by Proposition~\ref{pDeg3impliesDeg4} all the Pfaffians in the ideal of~$\SEbar$ vanish on~$D$. 
Therefore, all equations of~$\SEbar$ not involving the variable~$r$ vanish on~$D$ regardless of the choice of~$Q_7$.
By Proposition~\ref{pEquationsWeightedPP}, the only polynomial we have to check is therefore
\[
	y_1^2+y_2^2+y_3^2-rh .
\]
We apply $\sigma^{\#}$ to this equation, and obtain
\begin{align*}
	0 = \sigma^\#(y_1^2+y_2^2+y_3^2-rh) &\iff
	\sigma^\#(y_1^2+y_2^2+y_3^2)=\sigma^\#(r) \sigma^\#(h) \\
	&\iff Q_4^2 + Q_5^2 + Q_6^2 = Q_7 (e_0^2 + e_1^2 + e_2^2 + e_3^2) .
\end{align*}
This equation has a solution for~$Q_7$ if and only if $e_0^2 + \dots + e_3^2$ divides $Q_4^2 + Q_5^2 + Q_6^2$ in~$R_C$.
This is equivalent to $Q_4^2 + Q_5^2 + Q_6^2 \in \bigl(e_0^2 + \dots + e_3^2\bigr) \subset R_C$ or 
\[
	V \bigl( e_0^2 + \dots + e_3^2 \bigr) \subset 
	V \bigl( Q_4^2 + Q_5^2 + Q_6^2 \bigr) \,,
\]
where both vanishing sets are considered with their induced, possibly nonreduced, scheme structure of subschemes of~$C$.
Notice that $V \bigl( e_0^2 + \dots + e_3^2 \bigr)$ is $C \cap \EulerInfty$ (considered as a scheme).

By assumption (B), the scheme~$C \cap \EulerInfty$ consist of points with multiplicity~$1$ or~$2$.
Let $p + \epsilon p'$ with $\epsilon^2 = 0$ be such a point, 
so $p'=0$ if and only if the point has multiplicity~$1$. 
We need to prove that
\[
	(Q_4^2 + Q_5^2 + Q_6^2)(p + \epsilon p') = 0.
\]
For this we define $q + \epsilon q'$ by
\[
	q_j + \epsilon q_j' := Q_j(p+\epsilon p') 
	\quad \text{for } j \in \{ 1, \dotsc, 6 \},
\]
i.e.
\[
	\sigma(p+ \epsilon p') = 
	(p,q,*) + \epsilon (p',q',*)
\]
with the last entry unknown so far.

Since $p+\epsilon p'$ lies on $C$, we can restrict first condition of Proposition~\ref{pNecessaryAlgebraic}
further to $p + \epsilon p'$ and obtain
\begin{equation}
\tag{i}
\label{condition1}
	\bigl(
		N_X(p+\epsilon p') \,| \, N_Y(p+\epsilon p')
	\bigr)
	(q + \epsilon q')^t = 0.
\end{equation}
Since $p$ is a reduced point and therefore in $C^\infty$, 
we can restrict the second condition of Proposition~\ref{pNecessaryAlgebraic} further to~$p$ and obtain
\begin{equation}
\tag{ii}
\label{condition2}
	\begin{pmatrix}
	N_X(p) & 0 \\
	0 & N_Y(p) 
	\end{pmatrix} 
	q^t = 0. 
\end{equation}
Finally, $p+\epsilon p'$ lies on $\EulerInfty = V(e_0^2+e_1^2+e_2^2+e_3^2)$ and therefore
\begin{equation}
\tag{iii}
\label{condition3}
	(p_0+\epsilon p_0')^2 + \dots + (p_3+\epsilon p_3)^2 = 0.
\end{equation}
Now consider $p_i,p_i'$ for $i \in \{0,\dots, 3\}$ and $q_j,q_j'$ for $j \in \{1,\dots,6\}$ as
free variables. The three conditions above then are equivalent to 
the vanishing of sets of polynomials in these variables.
Let $I \subset K[p_0,\dots,p_3,q_1,\dots,q_6]$ be the ideal generated by these polynomials. 
Furthermore, for $i \in \{ 0, \dots, 3 \}$, let $I_i$ be the ideal defined by the condition
\begin{equation}
\tag{iv}
\label{condition4}
	p_i^2\bigl((q_4+\epsilon q_4)^2+(q_5+\epsilon q_5)^2+(q_6+\epsilon q_6)^2\bigr) = 0.
\end{equation}
Now, with a computer algebra program, we can check that
\[
	 I_i \subset I
	\quad \text{for $i \in \{0,\dots,3\}$}.
\]
This implies that every $p+\epsilon p'$ that satisfies conditions \eqref{condition1}, \eqref{condition2}, and \eqref{condition3} 
also satisfies condition~\eqref{condition4} for all $i \in \{0,\dots,3\}$.

In particular this is true for the point of multiplicity at most $2$ that we started with. 
Here at least one of the~$p_i$ is nonzero, so we obtain
\[
	0 = (q_4 + \epsilon q_4)^2 + (q_5 + \epsilon q_5)^2 + (q_6+\epsilon q_6)^2
	  = (Q_4^2 + Q_5^2 + Q_6^2)(p + \epsilon p').
\]
So we proved that $C \cap \EulerInfty \subset V \bigl( Q_4^2 + Q_5^2 + Q_6^2 \bigr)$, 
hence $Q_7$ can be chosen such that $D = \sigma(C)$ is inside~$\SEbar$, and $D$ is a lift of~$C$.
\end{proof}

Now that we established the $1 \colon 1$ correspondence between lifts, $1 \times 6$ matrices,
and sections of the tangent bundle as in Theorem~\ref{tConstruction},
we can aim at constructing families of lifts of the curves in Table~\ref{tWithQuadrics} and~\ref{tWithoutQuadrics}
and computing their dimension.
We start by calculating the cohomology of~$\TPthree|_C$ in two ways:

\begin{lemma} \label{lSectionsLarged}
Let $C \subset \PP^3$ be a smooth space curve of degree~$d$ and genus~$g$ with $d > 2g-2$. 
Then 
\[
	h^0(\TPthree|_C) = 4d+3-3g \quad \text{and} \quad h^1(\TPthree|_C) = 0 \,.
\]
\end{lemma}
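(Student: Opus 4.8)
The plan is to restrict the Euler sequence on~$\PP^3$ to~$C$ and read off both cohomology groups from the resulting long exact sequence. Recall the Euler sequence
\[
	0 \to \sOPthree \to 4\,\sOPthree(1) \to \TPthree \to 0 \,.
\]
Since this is an exact sequence of vector bundles, restricting to~$C$ keeps it exact, giving
\[
	0 \to \sO_C \to 4\,\sO_C(1) \to \TPthree|_C \to 0 \,.
\]
Passing to the long exact sequence in cohomology reduces the computation of $h^0(\TPthree|_C)$ and $h^1(\TPthree|_C)$ to the cohomology of $\sO_C$ and $\sO_C(1)$, which are well understood for a smooth curve.

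The key input is the hypothesis $d > 2g-2$. First I would record that $\sO_C(1)$ is a line bundle of degree~$d$ on~$C$, so by Serre duality $h^1\bigl(\sO_C(1)\bigr) = h^0\bigl(\omega_C \otimes \sO_C(-1)\bigr)$, and since $\deg\bigl(\omega_C \otimes \sO_C(-1)\bigr) = 2g-2-d < 0$ this vanishes. Riemann--Roch then gives $h^0\bigl(\sO_C(1)\bigr) = d-g+1$. For the trivial bundle, $C$ smooth and connected yields $h^0(\sO_C) = 1$ and $h^1(\sO_C) = g$.

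Assembling these, the relevant segment of the long exact sequence is
\[
	0 \to H^0(\sO_C) \to 4\,H^0\bigl(\sO_C(1)\bigr) \to H^0(\TPthree|_C) \to H^1(\sO_C) \to 4\,H^1\bigl(\sO_C(1)\bigr) \to H^1(\TPthree|_C) \to 0 \,,
\]
where the final $0$ comes from $H^2(\sO_C) = 0$ on a curve. Because $H^1\bigl(\sO_C(1)\bigr) = 0$, the group $H^1(\TPthree|_C)$ is a quotient of the zero module, so $h^1(\TPthree|_C)=0$. The remaining exact piece then gives, by the alternating sum of dimensions,
\[
	h^0(\TPthree|_C) = 4\,h^0\bigl(\sO_C(1)\bigr) - h^0(\sO_C) + h^1(\sO_C) = 4(d-g+1) - 1 + g = 4d + 3 - 3g \,,
\]
as claimed. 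There is no serious obstacle here; the only point requiring mild care is confirming the vanishing $h^1\bigl(\sO_C(1)\bigr)=0$, which is exactly where the numerical hypothesis $d > 2g-2$ enters, and ensuring the Euler sequence genuinely stays exact upon restriction (which holds since all three terms are locally free). Alternatively, one may bypass the explicit cohomology of the line bundles by computing $\chi(\TPthree|_C)$ directly from Riemann--Roch for the rank-$3$ bundle $\TPthree|_C$, whose determinant is $\sO_C(4)$ of degree $4d$, giving $\chi = 4d + 3(1-g)$, and then invoking the same vanishing to identify $\chi$ with $h^0$.
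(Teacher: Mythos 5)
Your proposal is correct and follows essentially the same route as the paper: restrict the Euler sequence to~$C$, use $d > 2g-2$ to obtain $h^1\bigl(\sO_C(1)\bigr)=0$, and read off both dimensions from the long exact cohomology sequence. The only differences are cosmetic — you spell out the Serre duality step and note an alternative via Riemann--Roch for the rank-$3$ bundle, both of which the paper leaves implicit.
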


\begin{proof}
We consider the Euler sequence
\[
	0 \to \sOPthree \to 4 \sOPthree(1) \to \TPthree \to 0.
\]
Since this is an exact sequence of vector bundles, it remains exact if we tensor with~$\sO_C$:
\[
	0 \to \sO_C\to 4 \sO_C(1) \to \TPthree|_C \to 0.
\]
Since $d > 2g-2$, we have $H^1(4 \sO_C(1)) = 0$. 
Therefore we obtain the following long exact sequence of cohomology groups:
\[
	\xymatrix@R=.2cm{
	0 \ar[r] & H^0(\sO_C) \ar[r]& H^0\bigl(4\sO_C(1)\bigr) \ar[r]& H^0(\TPthree|_C) \ar[r]& \\
	\mbox{ } \ar[r]& H^1(\sO_C) \ar[r]& 0 \ar[r]&  H^1(\TPthree|_C) \ar[r]&  0
	}
\]
Consequently
\begin{align*}
	h^0(\TPthree|_C) 
	&= h^0 \bigl(4 \sO_C(1) \bigr) - \bigl(h^0(\sO_C)-h^1(\sO_C) \bigr) \\
	&= 4(d+1-g) - (1-g) \\
	&= 4d+3-3g
\end{align*}
and $h^1(\TPthree|_C)=0$.
\end{proof}

\begin{lemma} \label{lTMFR}
Let $C \subset \PP^3$ be a space curve with Betti table of the form
\[
	\begin{matrix}
		1 & - & - & - \\
		- & \beta_{1,2} & \beta_{2,3} & -  \\
		- & \beta_{1,3} & \beta_{2,4} & \beta_{3,5} \\
		- & \beta_{1,4} & \beta_{2,5} & -
	\end{matrix}
\]
with $\beta_{1,4}\beta_{2,4}=0$. Then
\[
	h^0(\TPthree|_C) = 15+\beta_{2,4} \quad \text{and} \quad h^1(\TPthree|_C) = \beta_{1,4} \,.
\]
\end{lemma}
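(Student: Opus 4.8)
The plan is to avoid the restricted Euler sequence used in Lemma~\ref{lSectionsLarged}, since when $d \le 2g-2$ the twist $4\sO_C(1)$ no longer has vanishing $H^1$ and the connecting map becomes hard to control. Instead I would compute directly with the ideal sheaf $\mathcal{I}_C \subset \sOPthree$. Tensoring the structure sequence $0 \to \mathcal{I}_C \to \sOPthree \to \sO_C \to 0$ by the locally free sheaf $\TPthree$ keeps it exact, so from
\[
0 \to \TPthree \otimes \mathcal{I}_C \to \TPthree \to \TPthree|_C \to 0
\]
together with the classical facts $h^0(\TPthree) = 15$ and $H^{>0}(\TPthree) = 0$ on $\PP^3$, I would read off
\[
h^0(\TPthree|_C) = 15 - h^0(\TPthree\otimes\mathcal{I}_C) + h^1(\TPthree\otimes\mathcal{I}_C), \qquad h^1(\TPthree|_C) = h^2(\TPthree\otimes\mathcal{I}_C).
\]
Thus everything reduces to the cohomology of $\mathcal{E} := \TPthree \otimes \mathcal{I}_C$.

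Next I would sheafify the minimal free resolution of $\mathcal{I}_C$ encoded by the given Betti table and tensor it by $\TPthree$, obtaining a locally free resolution
\[
0 \to \TPthree\otimes\tilde F_3 \to \TPthree\otimes\tilde F_2 \to \TPthree\otimes\tilde F_1 \to \mathcal{E} \to 0
\]
of $\mathcal{E}$, whose terms are direct sums of twists $\TPthree(-j)$ with $j \in \{2,3,4,5\}$. The decisive input is the cohomology of these twists, extracted from the twisted Euler sequence $0 \to \sOPthree(m) \to 4\sOPthree(m+1) \to \TPthree(m) \to 0$: one finds $H^0(\TPthree(m)) = 0$ for $m \le -2$, $H^1(\TPthree(m)) = 0$ for all $m$, $H^3(\TPthree(m)) = 0$ for $-5 \le m \le -2$, and — the key point — $H^2(\TPthree(m)) = 0$ for every relevant $m$ except $m = -4$, where $H^2(\TPthree(-4)) \cong K$ is one-dimensional (arising from $H^3(\sOPthree(-4))$). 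Consequently, among all the twists in the resolution of $\mathcal{E}$, only the summands $\TPthree(-4)$ carry any cohomology in degrees $1$ and $2$, and they carry it solely in degree~$2$.

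I would then split the length-$3$ resolution into two short exact sequences and chase (equivalently, run the hypercohomology spectral sequence). Since the only surviving $E_1$-contributions are the $H^2(\TPthree(-4))$-summands sitting in the terms $\TPthree\otimes\tilde F_1$ and $\TPthree\otimes\tilde F_2$, of dimensions $\beta_{1,4}$ and $\beta_{2,4}$ respectively, the chase collapses to $h^0(\mathcal{E}) = 0$, with $h^1(\mathcal{E})$ and $h^2(\mathcal{E})$ being respectively the kernel and cokernel of the single induced map
\[
H^2(d_2) \colon H^2\bigl(\TPthree(-4)\bigr)^{\beta_{2,4}} \longrightarrow H^2\bigl(\TPthree(-4)\bigr)^{\beta_{1,4}},
\]
coming from the degree-$4$ block of the resolution differential $d_2 \colon \tilde F_2 \to \tilde F_1$. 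Here the hypothesis $\beta_{1,4}\beta_{2,4}=0$ does the work: one of the source or target is zero, so $H^2(d_2) = 0$ for trivial dimension reasons (minimality of the resolution, forcing that scalar block to vanish, would give the same conclusion). Hence $h^1(\mathcal{E}) = \beta_{2,4}$ and $h^2(\mathcal{E}) = \beta_{1,4}$, and substituting into the formulas above yields $h^0(\TPthree|_C) = 15 + \beta_{2,4}$ and $h^1(\TPthree|_C) = \beta_{1,4}$.

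The main obstacle I anticipate is the cohomology bookkeeping in the second step: one must verify that $H^2(\TPthree(-4))$ is the unique nonvanishing contribution — in particular that $H^2(\TPthree(-5)) = 0$, which hinges on the coordinate-multiplication map $H^3(\sOPthree(-5)) \to H^3(\sOPthree(-4))^{\oplus 4}$ being an isomorphism, and that all relevant $H^3(\TPthree(m))$ vanish — and then correctly identify the surviving connecting map as $H^2(d_2)$ with no extraneous contributions. Once these twisted-tangent-bundle cohomology groups are pinned down, the remainder is a purely formal diagram chase.
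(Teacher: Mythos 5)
Your proof is correct and is essentially the paper's own argument in different packaging: your $\mathcal{E} = \TPthree\otimes\mathcal{I}_C$ is precisely the sheaf $K_1$ that the paper obtains by splitting its resolution of $\sO_C$ into short exact sequences, the twisted-tangent cohomology input (everything vanishes for twists $-2$ through $-5$ except $h^2\bigl(\TPthree(-4)\bigr)=1$, with $H^2\bigl(\TPthree(-5)\bigr)=0$ following from the coordinate-multiplication isomorphism) is the same, and the vanishing of your $H^2(d_2)$ under the hypothesis $\beta_{1,4}\beta_{2,4}=0$ is exactly the paper's vanishing of the map $\alpha$. One small bonus in your write-up that the paper does not record: as you note parenthetically, minimality of the resolution forces the scalar $\sOPthree(-4)^{\beta_{2,4}} \to \sOPthree(-4)^{\beta_{1,4}}$ block of the differential to vanish, so the conclusion in fact holds without the hypothesis $\beta_{1,4}\beta_{2,4}=0$.
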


\begin{proof}
By the assumptions, $C$ has a minimal free resolution
\[
	0 \from \sO_C \from
	\sOPthree 
	\from
	\underbrace{ 
	\begin{matrix}
		\beta_{1,2} \, \sOPthree(-2) \\
		\oplus \\
		\beta_{1,3} \, \sOPthree(-3) \\
		\oplus \\
		\beta_{1,4}  \, \sOPthree(-4)
	\end{matrix}}_{E_1}
	\from
	\underbrace{
	\begin{matrix}
		\beta_{2,3} \, \sOPthree(-3) \\
		\oplus \\
		\beta_{2,4} \, \sOPthree(-4) \\
		\oplus \\
		\beta_{2,5} \, \sOPthree(-5)
	\end{matrix}}_{E_2}
	\from
	\underbrace{
	\begin{matrix}
		\\
		\\
		\beta_{3,5} \, \sOPthree(-5) \\
		\\
		\\
	\end{matrix}}_{E_3}
	\from 
	0 \,.
\]
Tensoring with~$\TPthree$ is an exact functor since $\TPthree$ is a vector bundle. 
We obtain an exact sequence
\[
	0 \from \TPthree|_C \from \TPthree 
	\from E_1 \otimes \TPthree \from E_2 \otimes \TPthree \from E_3 \otimes \TPthree \from 0 \,.
\]
From the Euler sequence we obtain that $h^i \bigl( \TPthree(-j) \bigr)$ vanishes
for all $i = 0\dots3$ and all $j = 2\dots 5$ except for
\[
	h^2 \bigl(\TPthree (-4) \bigr) = 1 \,.
\]
It follows that in the exact sequence above the only nonzero cohomology in the sequence above is
\begin{align*}
	h^0(\TPthree) &= 15 \quad \text{(also from the Euler sequence)} \,, \\
	h^2(E_1 \otimes \TPthree) &= \beta_{1,4} \,, \\
	h^2(E_2 \otimes \TPthree) &= \beta_{2,4} \,.
\end{align*}
We now split the above resolution into short exact sequences:
\begin{enumerate}
\item $0 \from K_2 \from E_2 \otimes \TPthree \from E_3 \otimes \TPthree \from 0$ \,, \label{split1}
\item $0 \from K_1 \from E_1 \otimes \TPthree \from K_2 \from 0$ \,, \label{split2}
\item $0 \from \TPthree|_C \from \TPthree \from K_1 \from 0$ \,. \label{split3}
\end{enumerate}
From \eqref{split1} we obtain 
\[
	h^2(K_2) = h^2(E_2 \otimes \TPthree) = \beta_{2,4} \,.
\]
Sequence \eqref{split2} gives
\[
	0 \from H^2(K_1) \from H^2(E_1 \otimes \TPthree) \xleftarrow{\alpha} H^2(K_2) \from H^1(K_1) \from 0 \,.
\]
Since either $h^2(E_1 \otimes \TPthree) = \beta_{1,4}$ or $h^2(K_2) = \beta_{2,4}$ is zero, the map~$\alpha$
must also be zero. Therefore
\[
	h^2(K_1) = \beta_{1,4} \quad \text{and} \quad h^1(K_1) = \beta_{2,4} \,.
\]
Since $H^1(\TPthree) = 0$, sequence~\eqref{split3} finally gives
\[
	0 \from H^2(K_1) \from H^1(\TPthree|_C) \from 0 \from H^1(K_1) \from H^0(\TPthree|_C) \from H^0(\TPthree) \from 0
\]
and therefore
\[
	h^0(\TPthree|_C) = 15+\beta_{2,4} \quad \text{and} \quad h^1(\TPthree|_C) = \beta_{1,4} \,. \qedhere
\]
\end{proof}

From this we also get the cohomology of the normal bundle for our space curves. 
This is useful to calculate the dimension of the Hilbert scheme of curves.

\begin{proposition} 
\label{pHilbP3}
Let $C \subset \PP^3$ be a smooth space curve of degree~$d$ and genus~$g$, such that
\[
	h^1(\TPthree|_C) = 0 \,.
\]
Let $\Hilb_C$ be the component of the Hilbert scheme of degree~$d$ and genus~$g$ space curves that contains~$C$. 
Then $\Hilb_C$ is smooth at~$C$ and has dimension
\[
	h^0(N_{C/\PP^3}) = h^0(\TPthree|_C) + 3g - 3 = 4d.
\]
\end{proposition}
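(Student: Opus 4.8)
The plan is to invoke the standard deformation theory of the Hilbert scheme and then compute the cohomology of the normal bundle of~$C$ via the normal bundle sequence. Recall that for a smooth curve $C \subset \PP^3$ the Zariski tangent space to the Hilbert scheme at the point~$[C]$ is canonically identified with $H^0(N_{C/\PP^3})$, while the obstructions to extending infinitesimal deformations lie in $H^1(N_{C/\PP^3})$. In particular, if $H^1(N_{C/\PP^3})$ vanishes then $\Hilb_C$ is unobstructed, hence smooth, at~$[C]$, and its dimension equals $h^0(N_{C/\PP^3})$. Thus the entire statement reduces to understanding the cohomology of~$N_{C/\PP^3}$.

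First I would write down the normal bundle sequence
\[
	0 \to T_C \to \TPthree|_C \to N_{C/\PP^3} \to 0,
\]
which is exact because $C$ is smooth, and pass to the associated long exact sequence in cohomology. Since $C$ is a curve we have $H^2(T_C) = 0$, so $H^1(N_{C/\PP^3})$ is a quotient of $H^1(\TPthree|_C)$; the latter vanishes by hypothesis, whence $h^1(N_{C/\PP^3}) = 0$. This already yields the smoothness of~$\Hilb_C$ at~$[C]$ and identifies its dimension with~$h^0(N_{C/\PP^3})$.

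It remains to evaluate $h^0(N_{C/\PP^3})$, which I would do through Euler characteristics. Since $T_C \cong \omega_C^{-1}$ is a line bundle of degree $2-2g$, Riemann--Roch on the curve gives $\chi(T_C) = (2-2g)+1-g = 3-3g$; restricting the Euler sequence of~$\PP^3$ to~$C$ as in Lemma~\ref{lSectionsLarged} and taking Euler characteristics gives $\chi(\TPthree|_C) = 4(d+1-g) - (1-g) = 4d+3-3g$. Additivity of~$\chi$ in the normal bundle sequence then yields $\chi(N_{C/\PP^3}) = \chi(\TPthree|_C) - \chi(T_C) = 4d$, and since $h^1(N_{C/\PP^3})=0$ we conclude $h^0(N_{C/\PP^3}) = 4d$. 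Feeding $h^1(\TPthree|_C)=0$ back into the cohomology sequence, the four-term piece $0 \to H^0(T_C) \to H^0(\TPthree|_C) \to H^0(N_{C/\PP^3}) \to H^1(T_C) \to 0$ gives in addition $h^0(N_{C/\PP^3}) = h^0(\TPthree|_C) + h^1(T_C) - h^0(T_C) = h^0(\TPthree|_C) + 3g-3$, matching the middle expression in the statement. I do not expect a genuine obstacle here; the only point requiring care is to avoid computing $h^0(T_C)$ and $h^1(T_C)$ individually---these depend on whether $g$ equals $0$, $1$, or is at least~$2$---and instead use only the genus-independent identity $h^1(T_C) - h^0(T_C) = -\chi(T_C) = 3g-3$, so that the single input $h^1(\TPthree|_C)=0$ propagates cleanly to all three claimed formulas.
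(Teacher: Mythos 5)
Your proof is correct and takes essentially the same route as the paper's: the normal bundle sequence $0 \to T_C \to \TPthree|_C \to N_{C/\PP^3} \to 0$ with $h^1(\TPthree|_C)=0$ forcing $h^1(N_{C/\PP^3})=0$, the standard identification of $H^0(N_{C/\PP^3})$ with the tangent space plus unobstructedness, and the genus-independent substitution $h^1(T_C)-h^0(T_C) = -\chi(T_C) = 3g-3$ via Riemann--Roch, combined with $h^0(\TPthree|_C)=4d+3-3g$ from the Euler sequence using only $h^1(\TPthree|_C)=0$ --- exactly as the paper notes. The only cosmetic difference is that you organize the bookkeeping through additivity of Euler characteristics rather than reading the dimensions off the long exact sequence directly.
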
 
\begin{proof}
We consider the normal bundle sequence 
\[
	\xymatrix{
	0 \ar[r] & T_C \ar[r] & \TPthree|_C \ar[r] & N_{C/\PP^3} \ar[r] & 0
	} \,.
\]
Since $h^1(\TPthree|_C)=0$ we also get $h^1(N_{C/\PP^3}) = 0$ and
\begin{align*}
	h^0(N_{C/\PP^3}) 
	&= h^0(\TPthree|_C) - h^0(T_C) + h^1(T_C) \\ 
	&= h^0(\TPthree|_C) - (2-2g + 1 - g) \\
	& = h^0(\TPthree|_C) + 3g - 3 \,.
\end{align*}
It is known that $H^0(N_{C/\PP^3})$ is the tangent space of~$\Hilb_C$ at~$C$ 
(see \cite[Theorem~1.1~(b)]{Hartshorne2010}). 
Since $H^1(N_{C/\PP^3}) = 0$, the deformation is unobstructed, i.e., 
$\Hilb_C$ is smooth at~$C$ and therefore of the claimed dimension 
(see \cite[Theorem~1.1~(c)]{Hartshorne2010}).
The second equality comes from $h^0(\TPthree|_C) = 4d + 3 -3g$ as in the proof of Lemma~\ref{lSectionsLarged}, 
where only $h^1(\TPthree|_C) = 0$ is used.
\end{proof}

Before stating the main theorem of this section, 
recall that inversion bonds are tangential intersections of a configuration curve with the boundary, 
while butterfly bonds are simple intersections. 
Therefore, if we have a curve $D \subseteq \SEbar$ of degree~$2d$ with no similarity, collinearity, or vertex bonds,
and $d'$ is the number of inversion bonds, then the number of butterfly bonds will be $2d-2d'$.

\begin{theorem} \label{tFamilies}
Pick a triple $(c,d,g)$ from Table~\ref{tWithQuadrics} and Table~\ref{tWithoutQuadrics} not marked by~$(\ast)$. 
Let $\mathrm{Hilb}_{\PP^{16}}(2d,g)$ be the Hilbert scheme of curves of degree~$2d$ and genus~$g$ in~$\PP^{16}$.
Let $H(c,d',d) \subset \mathrm{Hilb}_{\PP^{16}}(2d,g)$ be the locally closed subscheme of curves $D \in \mathrm{Hilb}_{\PP^{16}}(2d,g)$ such that:
\begin{enumerate}
\item $D \subset \SEbar \subset \PP^{16}$,
\item $\codim \langle D \rangle = c+7$, 
\item $D$ does not have similarity, collinearity, or vertex bonds, 
\item $D$ has $d'$ inversion bonds and $2d-2d'$ butterfly bonds, and
\item the restriction of the projection $\pi \colon \SEbar \dashrightarrow \PP^3$ to~$D$ defines an isomorphism to the image.
\end{enumerate}
Then $H(c,d',d)$ has at least one irreducible component~$H'$ with dimension at least
\[
	h^0(\TPthree|_{D}) + 2d. 
\]
These expected dimensions are listed in Table~\ref{tWithQuadricsDimensions} and Table~\ref{tWithoutQuadricsDimensions}. 
\end{theorem}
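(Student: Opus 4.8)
The plan is to exhibit $H(c,d',d)$ as the image of a fibration whose base parametrizes the Euler curves $C=\pi(D)\subset\PP^3$ and whose fibre over $C$ parametrizes the lifts of $C$, and then to add up the two dimensions using the dictionary of Theorem~\ref{tConstruction}. First I would fix $(c,d,g)$ and choose a smooth $C_0\subset\PP^3$ of degree $d$ and genus $g$ with the Betti table of the corresponding row of Table~\ref{tWithQuadrics} or Table~\ref{tWithoutQuadrics}, arranged so that (A) $C_0$ is quadratically normal, (B) $C_0$ meets $\EulerInfty$ with multiplicity at most $2$ everywhere, in exactly $d'$ tangential and $2d-2d'$ transversal points, and $h^1(\TPthree|_{C_0})=0$. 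Note $\deg D=2d$ because $\pi|_D$ is an isomorphism onto the degree-$d$ curve $C$ and $\weightedEmbedding$ is the $2$-uple embedding, so that $h^0(\TPthree|_D)=h^0(\TPthree|_C)$; the value of this number, together with the vanishing $h^1(\TPthree|_C)=0$, is furnished by Lemma~\ref{lSectionsLarged} when $d>2g-2$ and by Lemma~\ref{lTMFR} (with $\beta_{1,4}=0$) in the remaining non-$(\ast)$ rows.

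For the base, let $B$ be the locus in the Hilbert scheme of such curves carrying $d'$ tangencies to the fixed quadric $\EulerInfty$. By Proposition~\ref{pHilbP3} the relevant Hilbert component is smooth of dimension $4d$ at $C_0$, and each prescribed tangency to a fixed hypersurface is one (divisorial) condition, so $\dim B\ge 4d-d'$. For the fibre, fix $C\in B$: by Theorem~\ref{tConstruction} its lifts are in bijection with the sections $\tau\in H^0(\TPthree|_C)$ whose image in $N_{\EulerInfty/\PP^3}|_{C^\infty}$ vanishes (Proposition~\ref{pNecessaryGeometric}). Since $N_{\EulerInfty/\PP^3}\cong\sO_{\EulerInfty}(2)$ and $C^\infty$ is reduced of length $2d-d'$, we have $h^0\bigl(N_{\EulerInfty/\PP^3}|_{C^\infty}\bigr)=2d-d'$, so the liftable sections form a linear subspace of dimension at least $h^0(\TPthree|_C)-(2d-d')$. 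The projection from the incidence variety $\{(C,\tau)\}$ to $\mathrm{Hilb}_{\PP^{16}}(2d,g)$ is injective, as $D$ recovers both $C=\pi(D)$ and $\tau=\sigma^\#$; hence the resulting family of lifts has dimension at least
\[
	(4d-d')+\bigl(h^0(\TPthree|_C)-(2d-d')\bigr)=h^0(\TPthree|_C)+2d=h^0(\TPthree|_D)+2d,
\]
the two copies of $d'$ cancelling.

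I would then verify that a general member $D$ of this family lies in $H(c,d',d)$. Conditions (1) and (5) hold by the very construction of the lift in Theorem~\ref{tConstruction}. For (2), the remark following Definition~\ref{dLift} gives $\codim\langle D\rangle\ge c+7$, and equality holds for a general lift because quadratic normality (A) makes $H^0(I_C(2))$ exactly $c$-dimensional and forces the seven further relations $x_i-\sigma^\#(x_i),\dots,r-\sigma^\#(r)$ to be independent. For (3) and (4), the generic position of $C\cap\EulerInfty$ keeps the rotational part $A$ nonzero at every point of $C^\infty$, so no similarity, collinearity, or vertex bond occurs (Definition~\ref{dBonds}); and the prescribed $d'$ tangential versus $2d-2d'$ transversal intersections translate, through Corollary~\ref{cContactC} and Corollary~\ref{cContactButterfly}, into exactly $d'$ inversion bonds (smooth points of $\SEbar$) and $2d-2d'$ butterfly bonds. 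The constructed family therefore lies in a single irreducible component $H'$ of $H(c,d',d)$ of dimension at least $h^0(\TPthree|_D)+2d$.

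The hard part is the existence, for each admissible $d'$, of a starting curve $C_0$ with all of (A), (B), $h^1(\TPthree|_{C_0})=0$, and exactly $d'$ honest tangencies to $\EulerInfty$ realized so that the $d'$ tangency conditions are independent (giving $\dim B\ge 4d-d'$) while the other $2d-2d'$ intersections remain transversal and no bond degenerates. For $d'=0$ a general curve in the Hilbert component suffices, but forcing $d'>0$ prescribed tangencies, and certifying the accompanying genericity (smoothness of $\SEbar$ along the inversion bonds, $A\ne 0$ at every bond, and the exactness of the codimension count), is most safely done row by row, and for several entries of the tables I expect to rely on explicit models and computer-algebra checks, in keeping with the computational parts of the paper.
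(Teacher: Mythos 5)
Your proposal is correct and takes essentially the same route as the paper's proof: the same base--fibre dimension count (the locus of Euler curves with $d'$ tangencies to $\EulerInfty$ of dimension at least $4d-d'$, and liftable sections via Theorem~\ref{tConstruction} forming a linear space of dimension at least $h^0(\TPthree|_C)-(2d-d')$, with the two copies of $d'$ cancelling), the same inputs (Lemma~\ref{lSectionsLarged}, Lemma~\ref{lTMFR}, Proposition~\ref{pHilbP3}, Corollary~\ref{cContactButterfly}), and the same deferral of the existence of a suitable starting curve $C_0$ to explicit computer-algebra constructions, which the paper performs over finite fields and transfers to characteristic zero via Proposition~\ref{pExistsCharZero}. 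Your normal-bundle reformulation of the lifting condition and your explicit verification that a general lift attains $\codim\langle D\rangle=c+7$ and the prescribed bond types are slightly more detailed than the paper's treatment, but they do not change the argument.
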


\begin{proof}
First, we prove that $H(c,d',d)$ is not empty. 
Via computer algebra, we construct, over a finite field~$\FF_p$, 
a \emph{smooth} curve~$C_p \subset \PP^3$ of type $(c,d,g)$ in Table~\ref{tWithQuadrics} or Table~\ref{tWithoutQuadrics}
that intersects~$\EulerInfty$ in $d'$ distinct double points and $2d - 2d'$ distinct simple points. 
By Proposition~\ref{pExistsCharZero}, this implies the existence of a curve~$C$ with the same properties also over~$\CC$.
By Theorem~\ref{tConstruction}, lifts~$D$ of~$C$ that satisfy all conditions of the theorem 
are in bijection with elements of~$H^0(\TPthree|_{C})$ that lift to~$H^0(T_{\EulerInfty}|_{C^\infty})$. 
Imposing to lift to~$H^0(T_{\EulerInfty}|_{C^\infty})$ 
gives at most one condition to elements of~$H^0(\TPthree|_{C})$ for each reduced intersection point 
(namely, being tangent to a hypersurface), 
i.e., at most $d'+ (2d-2d') = 2d-d'$ conditions. 
The dimension of the space of elements ~$H^0(\TPthree|_{C})$ that lift to~$H^0(T_{\EulerInfty}|_{C^\infty})$ is then at least 
\[
	h^0(\TPthree|_{C}) - 2d + d' \geq h^0(\TPthree|_{C}) - 2d \,.
\]
Looking at Table~\ref{tWithQuadricsDimensions} and Table~\ref{tWithoutQuadricsDimensions}, 
we see that this quantity is always at least~$2$. 
Hence lifts~$D$ of~$C$ with the conditions from the statement always exist, i.e., $H(c,d',d)$ is not empty.

We consider a component~$H'$ of~$H(c,d',d)$ that contains curves~$D$ with smooth~$\pi(D)$. 
Let $\mathrm{Hilb}_{\PP^3}(d,g)$ be the Hilbert scheme of smooth curves in $\PP^3$ of degree~$d$ and genus~$g$. 
Since by hypothesis $\pi|_D$ is an isomorphism, we get a rational map $H' \dashrightarrow \mathrm{Hilb}_{\PP^3}(d,g)$.
Let $Y'$ be the image of~$H'$ under this map.

Since the elements $C \in \mathrm{Hilb}_{\PP^3}(d,g)$ are smooth, 
we can apply Lemma~\ref{lSectionsLarged} and Lemma~\ref{lTMFR} 
to see that $h^1(\TPthree|_{\pi(D)}) = 0$ in all cases not marked with~$(\ast)$. 
Proposition~\ref{pHilbP3} then implies $\dim \mathrm{Hilb}_{\PP^3}(d,g) = 4d$.

If $P$ is an inversion bond of~$D$, 
then $\pi(D)$ is contact to~$\EulerInfty$ by Corollary~\ref{cContactButterfly} at~$\pi(P)$. 
This imposes $d'$ conditions on the choice of~$\pi(D)$. 
Therefore, we have
\[
	\dim Y' \ge 4d -d' = 4d - d'.
\]
Again, by Theorem~\ref{tConstruction} the fibers of the natural projection
\[
	H' \to Y'
\]
have dimension at least $h^0(\TPthree|_{C}) - 2d+d'$. 
Therefore $H'$ has dimension at least 
\begin{align*}
	\dim Y' + h^0(\TPthree|_{C}) - 2d+d' &\ge 
	4d - d' + h^0(\TPthree|_{C}) - 2d+d' \\
	&= h^0(\TPthree|_{C}) + 2d. \qedhere
\end{align*}
\end{proof}

\begin{table}
\caption{Families of lifts of Euler curves as in Theorem~\ref{tFamilies}, 
and expected dimensions of the corresponding families of mobile hexapods. 
In the cases marked by~($\ast$), 
the scheme~$X$ from Proposition~\ref{pClassificationAlgebraic} must be a complete intersection of a quadric and a quartic surface in~$\PP^3$.}
\begin{tabular}{ccc|c|ccccc}
\toprule
 $c$ & $d$ & $g$& Betti table of~$C$ & $h^0(\TPthree|_C)$ & $h^1(\TPthree|_C)$ & $h^0(N_{C/\PP^3})$ & $\dim \mathrm{Hilb}$ & $\mathrm{edim} \, \text{$6$-pods}$ \\ 
\midrule
 $3$ & 3 & 0 & $\begin{matrix} 1 & - & - \\  - & 3 & 2  \end{matrix}$ & 15 & 0 & 12 & 21 & $\ge 16$\\
 \midrule
 $2$ & 4 & 1 & $\begin{matrix} 1 & - & - \\  - & 2& - \\ - & - & 1  \end{matrix}$ & 16 & 0 & 16 & 24 & $\ge 13$ \\
 \midrule
 $1$ & 4 & 0 & $\begin{matrix} 1 & - & -  & - \\  - & 1 & - & - \\ - & 3 & 4 & 1  \end{matrix}$ &  19 & 0 & 16 & 27 & $\ge 10$\\
 \midrule
	& 5 & 2 & $\begin{matrix} 1 & - & -  \\  - & 1 & -  \\ - & 2 & 2  \end{matrix}$ & 17 & 0 & 20 & 27 & $\ge 10$\\
 \midrule
 	& 6 & 4 & $\begin{matrix} 1 & - & -  \\  - & 1 & -  \\ - & 1 & - \\ - & - & 1  \end{matrix} $ & 15 & 0 & 24 & 27 & $\ge 10$\\
 \midrule
 	& 5 & 0 & $\begin{matrix} 1 & - & -  & - \\  - & 1 & - & - \\ - & - & - & - \\ - & 4 & 6 & 2  \end{matrix} \quad \hfill (\ast)$ & 23 & 0 & 20 & 33 & $\ge 16$\\ \hline
	& 6 & 3 & $\begin{matrix} 1 & - & -  & - \\  - & 1 & - & - \\ - & - & - & - \\ - & 3 & 4 & 1 \end{matrix} \hfill (\ast)$  & 18  & 0 & 24 & 30 & $\ge13$\\ \hline
	& 7 & 6 & $\hfill \begin{matrix} 1 & - & -  \\  - & 1 & -  \\ - & - & - \\ - & 2 & 2  \end{matrix} \hfill (\ast)$ & 15 & 2  \\
 \midrule
	& 8 & 9 & $\hfill \begin{matrix} 1 & - & -  \\  - & 1 & -  \\ - & - & -  \\ - & 1 & - \\ - & - & 1\end{matrix} \hfill (\ast)$ & \\
 \bottomrule
\end{tabular}
\label{tWithQuadricsDimensions}
\end{table} 

\begin{table}
\caption{Families of lifts of Euler curves as in Theorem~\ref{tFamilies}, 
and expected dimensions of the corresponding families of mobile hexapods.}
\begin{tabular}{ccc|c|ccccccc}
\toprule
$c$ & $d$ & $g$& Betti table of~$C$ & $h^0(\TPthree|_C)$ & $h^1(\TPthree|_C)$ & $h^0(N_{C/\PP^3})$ & $\dim \mathrm{Hilb}$ & $\mathrm{edim} \, \text{$6$-pods}$ \\ 
 \midrule
 $0$ & 5 & 0 & $\begin{matrix} 1 & - & -  & - \\  - & - & - & - \\ - & 4 & 3 & - \\ - & 1 & 2 & 1  \end{matrix}$ & 23 & 0 & 20 & 33 & $\ge 10$\\
 \midrule
     & 5 & 1 & $\begin{matrix} 1 & - & -  & - \\  - & - & - & - \\ - & 5 & 5 & 1  \end{matrix}$ & 20 & 0 & 20 & 30 & $\ge7$ \\
 \midrule
     & 6 & 3 & $\begin{matrix} 1 & - & -   \\  - & - & -  \\ - & 4 & 3  \end{matrix}$ & 18 & 0 & 24 & 30 & $\ge 7$\\
 \midrule
     & 6 & 2 & $\begin{matrix} 1 & - & -  & - \\  - & - & - & - \\ - & 3 & 1 & - \\ - & 1 & 3 & 1  \end{matrix}$ &  21 & 0  & 24  & 33  & $\ge 10$\\
 \midrule
     & 7 & 5 & $\begin{matrix} 1 & - & -   \\  - & - & -  \\ - & 3 & 1 \\ - & - & 1  \end{matrix}$ & 16 & 0 & 28 & 30 & $\ge 7$ \\
 \bottomrule
 \end{tabular}
 \label{tWithoutQuadricsDimensions}
\end{table}

\begin{corollary} \label{cFamilyGeneralizedHexapods}
Let $H$ be as in Theorem~\ref{tFamilies} and let $H' \subset H$ be the subfamily of generalized hexapod curves. 
Then the expected dimension of~$H'$ is at least 
\[
	 \dim H - 5 - 6(3-c).
\]
These expected dimensions are all positive and listed in Table~\ref{tWithQuadricsDimensions} and Table~\ref{tWithoutQuadricsDimensions}.
\end{corollary}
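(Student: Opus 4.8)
The plan is to deduce this from the dimension count of Proposition~\ref{pDimensionCount}, applied not to~$H$ directly but to the subfamily of curves passing through the identity element of~$\SE_3$. The curves in $H$ satisfy $\codim \langle D \rangle = c+7 =: \gamma$, so the correction appearing in Proposition~\ref{pDimensionCount}(\ref{ic6to9}) is $6(10-\gamma) = 6(3-c)$, which is exactly the second term in the statement. The first term, $-5$, will account for the passage to curves through the identity: since $\SEbar$ is $6$-dimensional, imposing that a curve contains the fixed point~$\id$ should cost at most~$5$ parameters. Note that the relevant triples have $c\in\{0,1,2,3\}$, hence $\gamma\in\{7,8,9,10\}$.

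To make the $-5$ precise, I would set $B \subseteq H$ to be the locus of curves $D$ with $\id \in D$ and prove $\dim B \ge \dim H - 5$ by exploiting the left-translation action of~$\SE_3$. A left translation by $(A_0,y_0)\in\SE_3$ acts \emph{linearly} on the bond coordinates $(A:x:y:r:h)$ of~$\PP^{16}$ (a direct computation from Definition~\ref{dBondEmbedding}), hence as a projective automorphism that preserves $\SEbar$, the quantity $\codim\langle D\rangle$, the boundary $\SEinfty$ together with its stratification into bonds (Definition~\ref{dBonds}), and the Euler quadric $\EulerInfty$, since $A\mapsto A_0A$ fixes $h=e_0^2+\dots+e_3^2$. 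As $\pi$ intertwines this action with the induced automorphism of~$\PP^3$, all five defining conditions of $H(c,d',d)$ in Theorem~\ref{tFamilies} are preserved, so $H$ is $\SE_3$-invariant. Considering the universal curve $\mathcal{D}\to H$ with its evaluation map $\mathcal{D}\to\SEbar$, a general $D\in H$ meets the open orbit $\SE_3$ (it is not contained in $\SEinfty$, because $\pi|_D$ is an isomorphism onto a curve which, by construction, is not contained in $\EulerInfty$), so translating a point of $D\cap\SE_3$ to $\id$ shows the evaluation map is dominant. Since $\dim \mathcal{D}=\dim H+1$ and $\dim\SEbar=6$, the fiber $B$ over $\id$ satisfies $\dim B \ge \dim H+1-6 = \dim H-5$; the orbit argument in fact gives equality, the $-1$ reflecting the $1$-dimensional intersection $D\cap\SE_3$.

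With this in hand I would apply Proposition~\ref{pDimensionCount} to an irreducible component of the closure of $B$, whose general member is a lift through the identity with $\codim\langle D_b\rangle=\gamma$ and $\dim(\langle D_b\rangle\cap\SEbar)=1$. For $c\in\{0,1,2\}$ we have $\gamma\in\{7,8,9\}$, and part~(\ref{ic6to9}) yields a subfamily $B'\subseteq B$ of generalized hexapod curves with $\dim B'\ge \dim B-6(10-\gamma)=\dim H-5-6(3-c)$; the numerical hypothesis $\dim B\ge 6(3-c)$ is read off the tables. For $c=3$ we have $\gamma=10$, and part~(\ref{ic10}) shows that every member of $B$ is already a generalized hexapod curve, so $B'=B$ and the same bound holds with $6(3-c)=0$. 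Since $B'\subseteq H'$, we conclude $\dim H'\ge \dim H-5-6(3-c)$, and positivity in each case follows by inserting the values of $\dim H$ recorded in Table~\ref{tWithQuadricsDimensions} and Table~\ref{tWithoutQuadricsDimensions}.

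The main obstacle is the rigorous justification of the $-5$, namely that restricting to curves through the identity drops the dimension by at most~$5$: this rests on the $\SE_3$-invariance of~$H$, which requires verifying that left translation preserves the entire bond stratification (the splitting into inversion, butterfly, similarity, collinearity, and vertex bonds, which is intrinsic to the singular and smooth loci of $\SEbar$ and to the condition $A=0$), together with the dominance of the evaluation map. A secondary point to confirm is the hypothesis $\dim(\langle D_b\rangle\cap\SEbar)=1$ of Proposition~\ref{pDimensionCount} for general~$b$, which holds for the general members produced in Theorem~\ref{tFamilies}.
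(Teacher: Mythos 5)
Your proposal is correct and follows essentially the same route as the paper's proof, which likewise combines the observation that passing through the identity imposes at most $5$ conditions (since $\dim \SE_3 = 6$) with an application of Proposition~\ref{pDimensionCount} for $\gamma = c+7$. The paper dispatches the $-5$ in a single sentence and applies the proposition uniformly, so your $\SE_3$-translation argument for the $-5$ and your explicit split of the case $c=3$ (via part~(\ref{ic10}) rather than part~(\ref{ic6to9})) are careful elaborations of steps the paper treats as immediate, not a different method.
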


\begin{proof}
A generalized hexapod curve must pass through the identity point of~$\SE_3$. 
Since $\dim \SE_3 = 6$, this imposes at most $5$ conditions.
 
By Proposition~\ref{pDimensionCount}, since the codimension of the span of these curves is~$c+7$, we have
\[
	\dim H' \ge (\dim H -5) - 6\bigl(10-(c+7)\bigr) \ge \dim H - 5 - 6(3-c).
\]
This gives the values in the last column of Table~\ref{tWithQuadricsDimensions} and Table~\ref{tWithoutQuadricsDimensions}.
\end{proof}

\begin{remark} \label{rButterflyComponents}
Notice that the bound on the dimension of~$H$ is independent of the number of butterfly points. 
This suggests that for each number of butterfly points the corresponding generalized hexapod curves in~$\SE_3$ form a family of the same dimension, and neither family is a specialization of one of the others. 
\end{remark}

\appendix
\section{Betti tables for nondegenerate irreducible reduced space curves of degree at most \texorpdfstring{$6$}{6}}
\label{sBetti}

Let $C \subset \PP^3$ be a reduced, nondegenerate scheme of pure dimension~$1$, degree~$d$, and arithmetic genus~$g_a$. 
Let
\[
	0 \from I_C 
	\from \bigoplus_{i\ge2} \sOPthree(-i)^{a_i}
	\from \bigoplus_{j\ge3} \sOPthree(-j)^{b_j}
	\from \bigoplus_{k\ge4} \sOPthree(-k)^{c_k}
	\from 0
\]
be the minimal free resolution of the ideal sheaf of~$C$. The curve~$C$ then has Betti table of the form
\[
	\begin{matrix}
		1 & - & - & - \\
		- & a_2 & b_3 & c_4 \\
		- & \vdots & \vdots & \vdots \\
		- & a_n & b_{n+1} & c_{n+2} \\
	\end{matrix}
\]
We call $(a_2,\dots,a_n \,\mid\, b_3,\dots,b_{n+1} \,\mid\, c_4,\dots c_{n+2})$ the \emph{Betti sequence} of~$C$.

In \cite{deg6resolutions} Hirotsugo Naito classifies possible Betti tables 
for nondegenerate, irreducible, \emph{smooth} curves~$C$ of degree~$d$ and geometric genus~$g$:

\begin{theorem}[Naito]
If $d \le 6$, then the Betti sequence of~$C$ is as in Table~\ref{table:naito}.
\begin{table}[ht]
\caption{Possible Betti sequences of smooth integral nondegenerate curves in~$\PP^3$.}
\label{table:naito}
\begin{tabular}{cl|cl}
\toprule
	$(d,g)$ & Betti sequence & $(d,g)$ & Betti sequence \\ \midrule
	$(3,0)$ & $(3\,|\,2\,|\,0)$ & $(6,0)$ & $(1,0,0,5\,|\,0,0,0,8\,|\,0,0,0,3)$ \\ \midrule
	$(4,0)$ & $(1,3\,|\,0,4\,|\,0,1)$ && $(0,1,6\,|\,0,0,9\,|\,0,0,3)$ \\ \midrule
	$(4,1)$ & $(2,0\,|\,0,1\,|\,0,0)$ && $(0,2,2,1\,|\,0,0,4,2\,|\,0,0,1,1)$ \\ \midrule
	$(5,0)$ & $(1,0,4\,|\,0,0,6\,|\,0,0,2)$ & $(6,1)$& $(0,2,3\,|\,0,0,6\,|\,0,0,2)$ \\ \midrule
					& $(0,4,1\,|\,0,3,2\,|\,0,0,1)$ & $(6,2)$& $(0,3,1\,|\,0,1,3\,|\,0,0,1)$ \\ \midrule
	$(5,1)$ & $(0,5\,|\,0,5\,|\,0,1)$ & $(6,3)$& $(1,0,3\,|\,0,0,4\,|\,0,0,1)$ \\ \midrule
	$(5,2)$ & $(1,2\,|\,0,2\,|\,0,0)$ && $(0,4\,|\,0,3\,|\,0,0)$  \\ \midrule
					&& $(6,4)$& $(1,1,0\,|\,0,0,1\,|\,0,0,0)$ \\ 
\bottomrule
\end{tabular}
\end{table}
\end{theorem}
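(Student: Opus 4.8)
The plan is to follow Naito's strategy but to track precisely where smoothness enters and replace it by integrality, since that is the only change the appendix needs to supply. The computational backbone is the ideal-sheaf sequence
\[
	0 \to I_C(n) \to \sOPthree(n) \to \sO_C(n) \to 0 ,
\]
from which the Hilbert function of the homogeneous coordinate ring $S/I_C$ is read off once the numbers $h^0(\sO_C(n))$ are known. For $n\ge 2$ one has $\deg \sO_C(n) = dn > 2p_a-2$ for every admissible pair, so $H^1(\sO_C(n))=0$ and Riemann--Roch gives $h^0(\sO_C(n)) = dn+1-p_a$; the two remaining values at $n=0,1$ are pinned down directly, using that nondegeneracy makes $H^0(\sOPthree(1))\to H^0(\sO_C(1))$ injective (so $h^0(\sO_C(1))\ge 4$) together with $\chi(\sO_C(n)) = dn+1-p_a$. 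First I would record that, by Castelnuovo's genus bound for integral nondegenerate space curves, the pairs $(d,g)$ with $d\le 6$ form exactly the finite list of Table~\ref{table:naito}, reducing the statement to a finite case check; for each pair the computation above produces the full Hilbert function of $S/I_C$.

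Each Hilbert function fixes the alternating sums $\sum_i(-1)^i\beta_{i,j}$ in every internal degree~$j$, hence determines the Betti table up to consecutive cancellation within a column. To remove this ambiguity I would combine three inputs: minimality of the resolution (no unit entries in the differentials, so a Betti number forced in an extremal position is genuine); the Gruson--Lazarsfeld--Peskine bound $\operatorname{reg}(C)\le d-1$, which for $d\le 6$ caps the spread of the table and so bounds how many columns can carry hidden terms; and the intermediate cohomology module $H^1_\ast(I_C):=\bigoplus_n H^1(I_C(n))$, whose nonvanishing both obstructs arithmetic Cohen--Macaulayness and is exactly what produces the ``ghost'' off-diagonal Betti numbers distinguishing the two tables at $(5,0)$ and the several tables in degree~$6$. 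Computing $\dim H^1(I_C(n))$ from the sequence above --- equivalently measuring the failure of $H^0(\sOPthree(n))\to H^0(\sO_C(n))$ to be surjective --- separates these cases, and in each instance the two possibilities correspond to a genuine geometric dichotomy (whether or not $C$ lies on a quadric, or is arithmetically Cohen--Macaulay). I expect this consecutive-cancellation step to be the main obstacle: the Hilbert function alone never suffices, and one must carry out the cohomological bookkeeping for every borderline pair to certify which table actually occurs.

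The point of the appendix is that \emph{no} step above truly uses smoothness. Castelnuovo's bound, the Gruson--Lazarsfeld--Peskine regularity bound, and the vanishing $H^1(\sO_C(n))=0$ for $n\ge 2$ are all statements about integral (reduced irreducible) nondegenerate curves, so they apply verbatim once one reads the genus as the arithmetic genus $p_a$ rather than the geometric genus. Concretely, I would substitute $p_a$ for $g$ throughout Naito's argument: the identity $\chi(\sO_C(n)) = dn+1-p_a$ persists for any integral curve, the connectedness forcing $h^0(\sO_C)=1$ is unaffected by singularities, and the resulting Hilbert-function computation is unchanged. Thus the entire case analysis --- the regularity cap, the $H^1_\ast(I_C)$ computation, and the consecutive-cancellation removal --- transports to integral curves, which is precisely the extension the classification in Proposition~\ref{pClassificationAlgebraic} requires. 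The only genuine verification is that the cohomological case-splits of the second paragraph still hold in the possibly-singular setting, and this follows because every invariant used there ($h^0(\sO_C(n))$, $h^1(I_C(n))$, arithmetic Cohen--Macaulayness) is defined for integral curves and computed from the same ideal-sheaf sequence.
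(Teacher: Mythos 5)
There is a genuine gap, and it sits exactly where you predicted the ``main obstacle'' would be. Your plan to resolve the consecutive-cancellation ambiguity by cohomological bookkeeping of $H^1_\ast(I_C)$ cannot work: two Betti tables that differ by a consecutive cancellation --- here, at $(d,g)=(6,2)$, the table $(0,3,1\,|\,0,1,3\,|\,0,0,1)$ of Table~\ref{table:naito} versus the candidate $(0,3,0\,|\,0,0,3\,|\,0,0,1)$ --- have the same Hilbert function \emph{and} the same $h^i\bigl(I_C(n)\bigr)$ for every twist, since the ghost pair contributes a split summand $\sOPthree(-4)\xrightarrow{\id}\sOPthree(-4)$ to the sheafified complex, which changes no cohomology. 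So no invariant computed from the ideal-sheaf sequence can decide between them; what must be decided is whether the three cubics in $I_C$ admit a linear syzygy (equivalently, whether a quartic generator is needed), and that is not a numerical question. Moreover, your blanket claim that no step of Naito's proof truly uses smoothness fails precisely here: Naito excludes the second table by Castelnuovo's theorem that a smooth $(6,2)$ curve has a $4$-secant line and hence cannot be cut out by cubics --- a genuinely geometric input tied to smoothness.

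The paper's appendix fills exactly this hole with a different, purely algebraic argument (Proposition~\ref{pNoMinimal}): if an irreducible reduced curve had the resolution $\sO_C \from \sOPP \from 3\sOPP(-3) \from 3\sOPP(-5) \from \sOPP(-6) \from 0$, then the first syzygy sheaf $\sE$ is locally free (local Cohen--Macaulayness in codimension~$2$, Hilbert--Burch plus Schanuel's Lemma), and $\sE$ is also the cokernel of the last differential, a $3\times 1$ matrix of linear forms; restricting the resulting sequence of vector bundles to a common zero $P \in \PP^3$ of those three linear forms makes the map that exactness forces to be injective identically zero --- a contradiction. You would need this proposition (or an extension of the $4$-secant theorem to integral curves) to close your argument. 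Two smaller omissions: the borderline pairs $(5,0)$ and $(6,0)$, where Naito's proof also invokes smoothness, are rescued in the paper by the observation that an irreducible curve of arithmetic genus~$0$ is automatically smooth --- a step your ``substitute $p_a$ for $g$ throughout'' recipe passes over; and the Castelnuovo bound for the \emph{arithmetic} genus is not automatic but is justified in the paper via generic initial ideals (Halphen's bound with $s=2$).
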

The same classification is true if $C$ is only nondegenerate, irreducible, and reduced.
For this, Naito's proof must only be slightly modified:

Theorem~$2$ in his paper (by Gruson-Lazarsfeld-Peskine) is true for nondegenerate, irreducible, reduced curves, 
and also Naito's Lemma~$3$ and Lemma~$4$ are all stated and proved for this case. 

In the proof of his Theorem~$1$, Naito uses Castelnuovo's bound for the geometric genus~$g$ of a 
nondegenerate, irreducible, smooth curve $C \subset \PP^3$:
\[
	g \le 
	\begin{cases}
		\frac{1}{4}d^2-d+1     & \text{if $d$ is even,} \\
		\frac{1}{4}(d^2-1)-d+1 & \text{if $d$ is odd.}
	\end{cases}
\]
The same bound is true for the arithmetic genus~$g_a$ of a nondegenerate, irreducible reduced curve $C \subset \PP^3$.
This can be shown by using generic initial ideals. 
For such a proof see \cite[Theorem~2.24 (Halphen's Bound)]{DeckerSchreyer}. 
Castelnuovo's bound above is a special case of Halphen's bound for $s=2$.

This leaves the cases where $(d,g)$ is either $(5,0)$, $(6,0)$, or~$(6,2)$. 
If the arithmetic genus of an irreducible curve is~$0$, then the curve is smooth. 
Therefore Naito's proof goes through in those cases. 
This leaves the case~$(6,2)$, for which the previous arguments leave the following two possibilities:
\[
\begin{matrix}
	1 & - & - & - \\
	- & - & - & - \\
	- & 3 & 1 & - \\
	- & 1 & 3 & 1 \\
\end{matrix}
\quad\quad\quad\quad
\begin{matrix}
	1 & - & - & - \\
	- & - & - & - \\
	- & 3 & - & - \\
	- & - & 3 & 1 \\
\end{matrix} \,.
\]
If $C$ is smooth, the second case is not possible since, as pointed out by Naito, 
by another theorem of Castelnuovo such a curve must have at least one $4$-secant,
hence the curve cannot be cut out by cubics. 
For $C$ only irreducible and reduced we can argue as follows:

\begin{proposition}
\label{pNoMinimal}
There exists no irreducible reduced curve $C \subset \PP^3$ with minimal free resolution
\[
	\sO_C \leftarrow \sOPP \xleftarrow{\phi_1} 3\sOPP(-3) \xleftarrow{\phi_2} 3\sOPP(-5) \xleftarrow{\phi_3} \sOPP(-6) \leftarrow 0 \,.
\]
\end{proposition}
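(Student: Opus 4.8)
The plan is to derive a contradiction from the cohomological information encoded in the \emph{tail} of the stated resolution, which has the virtue of needing neither smoothness nor a quadrisecant. Write $R = K[e_0,\dots,e_3]$ and suppose $C \subset \PP^3$ is an irreducible reduced curve whose saturated ideal $I_C$ has the given minimal free resolution. Since the resolution has length~$3$, Auslander--Buchsbaum gives $\mathrm{depth}(R/I_C) = 1$, so $C$ is not arithmetically Cohen--Macaulay and its Hartshorne--Rao module $M := \bigoplus_{n} H^1\bigl(\PP^3, \mathcal I_C(n)\bigr)$ is nonzero. The general fact I would lean on is that, for any integral curve in $\PP^3$, this module has \emph{finite length}: indeed $H^1(\mathcal I_C(n)) = 0$ for $n \gg 0$ by Serre vanishing, while for $n < 0$ the sequence $0 \to \mathcal I_C \to \sOPthree \to \sO_C \to 0$ yields $H^1(\mathcal I_C(n)) \cong H^0(\sO_C(n)) = 0$ because $C$ is integral.

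First I would make the last map explicit. The graded shifts force $\phi_3 \colon \sOPthree(-6) \to 3\,\sOPthree(-5)$ to be a single column $(\ell_1,\ell_2,\ell_3)^t$ of linear forms, nonzero by minimality. Dualizing the resolution over $R$ and reading the cohomology at the final spot of the dualized complex gives
\[
	\mathrm{Ext}^3_R(R/I_C, R) \;\cong\; \mathrm{coker}\Bigl( R(5)^3 \xrightarrow{(\ell_1,\ell_2,\ell_3)} R(6) \Bigr) \;\cong\; \bigl(R/(\ell_1,\ell_2,\ell_3)\bigr)(6).
\]
On the other hand, graded local duality identifies $\mathrm{Ext}^3_R(R/I_C, R) \cong H^1_{\mathfrak m}(R/I_C)^\vee(4) \cong M^\vee(4)$, which is a module of finite length since $M$ is.

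The contradiction is then immediate: three linear forms in the four variables $e_0,\dots,e_3$ always share a common zero in $\PP^3$, so the ideal $(\ell_1,\ell_2,\ell_3)$ is never $\mathfrak m$-primary and $R/(\ell_1,\ell_2,\ell_3)$ has Krull dimension at least~$1$; in particular it is not of finite length, and a linear dependence among the $\ell_i$ only enlarges this quotient. This contradicts the finiteness of $M^\vee$, so no irreducible reduced curve with the given resolution can exist.

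The step I expect to require the most care is the homological bookkeeping in the middle paragraph: correctly matching the cokernel of the dualized last map with a twist of the Hartshorne--Rao module via graded local duality, tracking the twists, and using that $\mathrm{Ext}^3$ is the cohomology at the terminal spot of the dualized complex. A purely geometric alternative would mirror Naito's smooth argument: exhibit a $4$-secant line $L$ of $C$, whence every cubic of $I_C$ (which by the Betti table generate $I_C$) must contain $L$, forcing $L \subseteq C$ against irreducibility. However, producing a quadrisecant for a possibly singular integral curve is exactly the delicate point, which is why I prefer the cohomological route.
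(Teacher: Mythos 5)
Your proof is correct, and it takes a genuinely different route from the paper's. The paper argues sheaf-theoretically: since an integral curve is locally Cohen--Macaulay, the first syzygy sheaf $\sE = \mathrm{coker}(\phi_3)$ is locally free (this is proved in the appendix via Hilbert--Burch and Schanuel's lemma), so the short exact sequence $0 \to \sOPP(-6) \xrightarrow{\phi_3} 3\sOPP(-5) \to \sE \to 0$ of vector bundles remains exact after restriction to any point; choosing $P$ to be a common zero of the three linear entries of $\phi_3$ --- which exists because three linear forms in four variables always vanish simultaneously somewhere in $\PP^3$ --- gives $\phi_3|_P = 0$, contradicting that exactness. You replace this with graded duality: $\mathrm{Ext}^3_R(R/I_C,R) \cong \bigl(R/(\ell_1,\ell_2,\ell_3)\bigr)(6)$ has Krull dimension at least $1$, while local duality identifies it with $M^\vee(4)$, which has finite length because $C$ is integral. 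Note that both proofs turn on exactly the same pressure point --- the ideal $(\ell_1,\ell_2,\ell_3)$ is not $\mathfrak{m}$-primary --- and both consume integrality in essentially equivalent ways: the finite length of the Hartshorne--Rao module (equivalently, the vanishing of the sheaf $\mathcal{E}xt^3(\sO_C,\sOPP)$) is the statement that $C$ is locally Cohen--Macaulay with no zero-dimensional or embedded components, which is precisely what makes $\sE$ a bundle in the paper's version. What the paper's argument buys is economy of machinery: restriction of a bundle sequence to a point, with no local duality and no twist bookkeeping. What yours buys is that the contradiction lives entirely at the level of graded modules, with the geometric input isolated in one standard finiteness fact, and it makes transparent why neither smoothness nor Naito's quadrisecant is needed. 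Your twists are right, for the record: $\omega_R = R(-4)$ gives $\mathrm{Ext}^3_R(R/I_C,R) \cong H^1_{\mathfrak{m}}(R/I_C)^\vee(4)$, and $H^1_{\mathfrak{m}}(R/I_C) \cong M$ because $I_C$ is saturated. One cosmetic remark: the nonvanishing of the column $(\ell_1,\ell_2,\ell_3)^t$ follows from the injectivity of $\phi_3$ (it is the start of a resolution) rather than from minimality, which only places the entries in $\mathfrak{m}$; and your observation that a linear dependence among the $\ell_i$ only enlarges the quotient correctly disposes of the degenerate cases.
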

\begin{proof}
Assume, to the contrary, that such a curve exists.
Since $C$ is locally Cohen-Macaulay, the kernel $\sE$ of~$\phi_1$ is a vector bundle 
(this is a well-known fact, nevertheless we give a proof below). 
The vector bundle~$\sE$ is also the cokernel of~$\phi_3$: 
\[
	0 \leftarrow \sE \leftarrow 3\sOPP(-5) \xleftarrow{\phi_3} \sOPP(-6) \leftarrow 0 \,.
\]
Now, $\phi_3$ is represented by a $3 \times 1$-matrix of linear forms. Let $P \in \PP^3$ be a point where
these linear forms vanish simultaneously. On the one hand, since the sequence above is a sequence of
vector bundles, its restriction to~$P$ is still exact:
\[
	0 \leftarrow \sE|_P \leftarrow 3\sO_P(-5) \xleftarrow{\phi_3|_P} \sO_P(-6) \leftarrow 0 \,.
\]
On the other hand, we have $\phi_3|_P = 0$, a contradiction.
\end{proof}

We conclude by showing the result mentioned in the proof of Proposition~\ref{pNoMinimal}.

\begin{proposition}
Let $C \subset \PP^n$ be a locally Cohen-Macaulay scheme of codimension~$2$ with a free resolution
\[
	0 \from \sO_C \from \OPn \from F_1 \from \dots \from F_k \from 0
\]
over a field $K$.
Let $\sE$ be the first syzygy sheaf of~$C$, i.e., the sheaf that makes the sequence
\[
	0 \from \sO_C \from \OPn \from F_1 \from \sE \from 0
\]
exact. Then $\sE$ is locally free. 
\end{proposition}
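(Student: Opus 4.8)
The statement is local, so the plan is to check freeness stalk by stalk: it suffices to show that for every point $p \in \PP^n$ the stalk $\sE_p$ is a free module over the regular local ring $R := \sO_{\PP^n,p}$, which satisfies $\dim R = \operatorname{depth} R = n$ and has finite global dimension. Localizing the given resolution at $p$ yields an exact sequence of $R$-modules
\[
	0 \to \sE_p \to (F_1)_p \to R \to (\sO_C)_p \to 0,
\]
so $\sE_p$ is a second syzygy of the $R$-module $M := (\sO_C)_p = R/I_p$, where $I_p$ is the stalk of the ideal sheaf of~$C$.

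First I would dispose of the case $p \notin C$. Here $M = 0$, so $(F_1)_p \to R$ is surjective; as $R$ is free this surjection splits and exhibits $\sE_p$ as a direct summand of the free module $(F_1)_p$. A finitely generated projective module over the local ring $R$ is free, so $\sE_p$ is free.

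For $p \in C$, the hypothesis that $C$ is locally Cohen--Macaulay of codimension~$2$ means precisely that $M = R/I_p$ is a Cohen--Macaulay $R$-module of dimension $n-2$, and hence $\operatorname{depth}_R M = n-2$. The Auslander--Buchsbaum formula
\[
	\operatorname{pd}_R M + \operatorname{depth}_R M = \operatorname{depth} R = n
\]
then gives $\operatorname{pd}_R M = 2$. It remains to see that a second syzygy of a module of projective dimension~$2$ is free, even though the resolution we started from need not be minimal. I would argue by dimension shifting: from the exactness above one gets $\operatorname{Ext}^i_R(\sE_p, N) \cong \operatorname{Ext}^{i+2}_R(M,N)$ for every $R$-module $N$ and every $i \ge 1$, and the right-hand side vanishes because $i+2 > \operatorname{pd}_R M$. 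Thus $\operatorname{pd}_R \sE_p = 0$, so $\sE_p$ is projective and therefore free over the local ring~$R$. (Alternatively, one may invoke Schanuel's lemma to compare $\sE_p$ with the last free module in a minimal resolution of~$M$, which is free since $\operatorname{pd}_R M = 2$.)

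Since $\sE_p$ is free at every $p \in \PP^n$, the coherent sheaf $\sE$ is locally free, as claimed. The only point requiring genuine care --- and the one place where the hypotheses enter essentially --- is the depth computation feeding the Auslander--Buchsbaum formula: local Cohen--Macaulayness is exactly what forces $\operatorname{depth}_R M = n-2$ rather than merely $\operatorname{depth}_R M \le n-2$, and hence pins down $\operatorname{pd}_R M = 2$ instead of yielding only the inequality $\operatorname{pd}_R M \ge 2$, which would not suffice to conclude freeness of the syzygy.
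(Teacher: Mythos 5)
Your proof is correct, but it takes a genuinely different route from the paper. The paper gets local freeness from the Hilbert--Burch theorem: local Cohen--Macaulayness in codimension~$2$ gives at each stalk a length-two free resolution
\[
	0 \from \sO_{C,x} \from \sO_{\PP^n,x} \from \sO_{\PP^n,x}^r \from \sO_{\PP^n,x}^{r-1} \from 0 \,,
\]
and then (generalized) Schanuel's lemma, comparing this with the localized given resolution, exhibits $\sE_x$ as a direct summand of a free module, hence projective, hence free. You instead obtain $\operatorname{pd}_R M = 2$ abstractly from the Auslander--Buchsbaum formula over the regular local ring $R$ and then kill $\operatorname{Ext}^1_R(\sE_p, -)$ by dimension shifting along the four-term sequence. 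This bypasses Hilbert--Burch entirely and isolates the only real input, namely $\operatorname{pd}_R M \le 2$; your argument therefore generalizes immediately (the $c$-th syzygy sheaf of a locally Cohen--Macaulay subscheme of codimension~$c$ is locally free), whereas what Hilbert--Burch buys the paper is the explicit shape of the local resolution with ranks $r$ and $r-1$ --- extra information not actually needed for this proposition. Your parenthetical Schanuel alternative is essentially the paper's argument, with the length-two minimal resolution supplied by Auslander--Buchsbaum rather than Hilbert--Burch; and your explicit treatment of the case $p \notin C$ covers a point the paper leaves implicit. One small imprecision: $\dim \sO_{\PP^n,p} = \operatorname{depth} \sO_{\PP^n,p} = n$ holds only at \emph{closed} points $p$; either restrict to closed points (which suffices, since freeness of a stalk at a closed point propagates to an open neighborhood and closed points are dense) or replace $n$ by $\dim \sO_{\PP^n,p}$ throughout --- in either reading, codimension~$2$ still forces $\operatorname{pd}_R M = 2$ and the conclusion stands.
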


\begin{proof}
We show that for every $x \in \PP^n$ the stalk~$\sE_x$ is a free $\sO_{\PP^n,x}$-module. 
Since $C$ is locally Cohen-Macaulay of codimension~$2$, 
by the Hilbert-Burch theorem there exists a free resolution of~$\sO_{C,x}$ of the form
\[
	0 \from \sO_{C,x} \from \sO_{\PP^n,x} \from \sO_{\PP^n,x}^r \from \sO_{\PP^n,x}^{r-1} \from 0 \,.
\]
By Schanuel's Lemma\footnote{We thank Dario Portelli for suggesting us the use of Schanuel's Lemma.} 
we then have that $\sE_x \oplus \sO_{\PP^n,x}^r \cong \sO_{\PP^n,x}^{r-1} \oplus F_{1,x}$. 
Hence $\sE_x$ is a projective $\sO_{\PP^n,x}$-module, because it is a direct summand of a free module, 
so it is free, because projective modules over local rings are free.
\end{proof}

\section{Space curves lying on 3 independent cubics}
\label{sThreeCubics}

In this section we use the theory of generic initial ideals to bound 
the degree of space curves that lie on~$3$ independent cubics. 
We take what we need of this theory from the excellent review of~\cite{DeckerSchreyer}.

In what follows, let $C \subset \PP^3$ be an irreducible nondegenerate space curve 
and $\Gamma \subset \PP^2$ a general hyperplane section of~$C$. 

\begin{definition}
\label{dGin}
Let $I \subset \CC[w_0,w_1,w_2]$ be a homogeneous ideal and 
$\initial(I)$ be the ideal of initial forms of~$I$ with respect to the reverse lexicographic order. 
By a theorem of Galligo (see~\cite{Galligo1974} and~\cite{GreenGin}) 
there exists a non-empty Zariski open subset $U \subset GL(3,\CC)$ 
such that $\initial\bigl(g(I)\bigr)$ is constant and Borel-fixed for all $g \in U$ 
(for a definition of Borel-fixedness, see \cite[Definition~2.2]{DeckerSchreyer}). 
In this situation we set
\[
	\gin(I) := \initial\bigl(g(I)\bigr) 
\]
for $g \in U$.
\end{definition}

\begin{proposition}
For a set~$\Gamma$ of $d$ points in~$\PP^2$ the generic initial ideal is of the form
\[
	\gin(I_\Gamma) = 
	\left\langle 
		w_0^s,
		w_0^{s-1} w_1^{\lambda_{s-1}},
		\dots,
		w_0w_1^{\lambda_1},
		w_1^{\lambda_0} 
	\right\rangle
\]
for some $s \in \NN$, with 
\[
	\lambda_0 > \dots > \lambda_{s-1} \ge 1.
\]
The scalars $\{ \lambda_i \}_{i=0}^{s-1}$ are called the \emph{GP-invariants} of~$\Gamma$.
\end{proposition}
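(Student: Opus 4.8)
The plan is to combine two standard facts about generic initial ideals under the reverse lexicographic order with an elementary combinatorial analysis of Borel-fixed monomial ideals in two variables. Throughout, write $J := \gin(I_\Gamma)$, which by Definition~\ref{dGin} is Borel-fixed; since we work in characteristic~$0$, this means $J$ is strongly stable, i.e., whenever a monomial $m$ satisfies $w_i m \in J$ and $j < i$, then $w_j m \in J$.

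First I would record that $I_\Gamma$ is saturated, as $\Gamma$ is a zero-dimensional scheme in~$\PP^2$. Under the reverse lexicographic order, the formation of the generic initial ideal commutes with saturation (a theorem of Bayer--Stillman; see~\cite{DeckerSchreyer}), so $J$ is itself saturated. The crucial consequence is that a saturated strongly stable ideal in $\CC[w_0,w_1,w_2]$ has no minimal generator divisible by the last variable~$w_2$: for a Borel-fixed ideal, saturation with respect to the irrelevant maximal ideal agrees with saturation with respect to~$w_2$, and the latter removes exactly the $w_2$-divisible part, so involvement of~$w_2$ in a minimal generator would contradict saturation. Hence $J$ is generated by monomials in $w_0$ and~$w_1$ alone. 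Next, since $\Gamma$ consists of $d$ points, the Hilbert polynomial of $S/I_\Gamma$ is the constant~$d$; the Hilbert function is preserved on passing to the initial ideal, so the same holds for $S/J$. Because $J$ involves only $w_0,w_1$, it is extended from an ideal $J' \subset \CC[w_0,w_1]$, and the condition that $S/J$ have dimension~$1$ and degree~$d$ forces $J'$ to have finite colength~$d$; in particular $J'$ contains pure powers of both $w_0$ and~$w_1$.

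Finally I would carry out the combinatorial step. For each $i \ge 0$ set $\lambda_i := \min\{ b : w_0^i w_1^b \in J' \}$, which is finite for every~$i$ and equals~$0$ precisely once $w_0^i \in J'$. Applying strong stability to $w_0^i w_1^{\lambda_i}$ when $\lambda_i \ge 1$ (replacing one factor $w_1$ by~$w_0$) yields $w_0^{i+1} w_1^{\lambda_i - 1} \in J'$, whence $\lambda_{i+1} \le \lambda_i - 1$. Thus the $\lambda_i$ strictly decrease until they first reach~$0$; letting $s$ be minimal with $\lambda_s = 0$ gives $\lambda_0 > \lambda_1 > \dots > \lambda_{s-1} \ge 1$. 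A short check shows the corner monomials $w_0^i w_1^{\lambda_i}$ for $i = 0, \dots, s$ are exactly the minimal generators: none divides another, because a strictly smaller column has a strictly larger $w_1$-exponent and a strictly smaller $w_1$-exponent in the same column is excluded by minimality of~$\lambda_i$. This produces the asserted list $w_0^s, w_0^{s-1} w_1^{\lambda_{s-1}}, \dots, w_0 w_1^{\lambda_1}, w_1^{\lambda_0}$. The only genuinely nontrivial input is the reverse-lex saturation property eliminating~$w_2$; once that is in hand, the remaining arguments are formal.
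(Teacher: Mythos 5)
Your proof is correct. The paper itself gives no argument at this point: its ``proof'' is a one-line citation to the remark after Definition~2.17 of~\cite{DeckerSchreyer}, with the comment that the statement follows from Borel-fixedness of the generic initial ideal. What you have written is a correct, self-contained elaboration of exactly that remark, and you have correctly identified the two nontrivial inputs that the citation hides: the Bayer--Stillman theorem that $\gin$ with respect to the reverse lexicographic order commutes with saturation (so that $\gin(I_\Gamma)$, being the gin of the saturated ideal of a zero-dimensional scheme, contains no minimal generator divisible by~$w_2$ once one knows that for Borel-fixed ideals saturation with respect to the irrelevant ideal agrees with saturation with respect to the last variable), and strong stability in characteristic~$0$, which drives the combinatorics. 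Your finite-colength argument via the constant Hilbert polynomial~$d$ and the strict descent $\lambda_{i+1} \le \lambda_i - 1$ obtained by trading a factor $w_1$ for~$w_0$ are both sound, and the verification that the corner monomials $w_0^i w_1^{\lambda_i}$, $i = 0, \dotsc, s$, are precisely the minimal generators is complete (no corner divides another, and any monomial of~$J'$ is divisible by the corner in its column, or by~$w_0^s$). Compared with the paper, your route buys a proof readable without consulting~\cite{DeckerSchreyer}, at the cost of invoking Bayer--Stillman as a black box; that is a reasonable trade, since that theorem is genuinely needed and cannot be replaced by the Borel-fixedness property alone.
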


\begin{proof}
This is the remark after Definition~2.17 of~\cite{DeckerSchreyer}. 
It follows from the Borel-fixedness property of the generic initial ideal.
\end{proof}

\begin{proposition} \label{pDegree}
With the notation above, we have
\[
	d = \sum_{i=0}^{s-1} \lambda_i
\]
\end{proposition}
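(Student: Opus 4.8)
The plan is to deduce the equality from the invariance of the Hilbert function under passage to the generic initial ideal. Recall that for every $g \in GL(3,\CC)$ the ideals $I_\Gamma$ and $g(I_\Gamma)$ share the same Hilbert function, and that taking initial ideals with respect to a fixed monomial order preserves Hilbert functions as well; hence $I_\Gamma$ and $\gin(I_\Gamma)$ define quotient rings with the same Hilbert polynomial. Since $\Gamma$ is a reduced set of $d$ points, the Hilbert polynomial of $\CC[w_0,w_1,w_2]/I_\Gamma$ is the constant~$d$. So it will suffice to compute the Hilbert polynomial of $\CC[w_0,w_1,w_2]/\gin(I_\Gamma)$ directly and to check that it equals the constant $\sum_{i=0}^{s-1}\lambda_i$.

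First I would exploit the crucial feature of the displayed generators: they involve only the variables $w_0$ and $w_1$, which reflects the fact that the reverse lexicographic generic initial ideal of a saturated ideal does not involve the last variable. Writing $J := \gin(I_\Gamma)$ and setting $\lambda_s := 0$, the generators are exactly the monomials $w_0^i w_1^{\lambda_i}$ for $i = 0,\dots,s$. A monomial $w_0^a w_1^b w_2^c$ then lies in $J$ precisely when some $w_0^i w_1^{\lambda_i}$ divides it, that is, when there is an index $i \le a$ with $\lambda_i \le b$. Because $\lambda_0 > \lambda_1 > \dots > \lambda_s = 0$ is strictly decreasing, for fixed $a \le s-1$ the least such $\lambda_i$ is $\lambda_a$; thus $w_0^a w_1^b w_2^c \notin J$ if and only if $a \le s-1$ and $b \le \lambda_a - 1$, with $c$ arbitrary.

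Next I would count the standard monomials degree by degree. For every $t$ large enough (say $t \ge (s-1)+(\lambda_0-1)$) and every admissible pair $(a,b)$ with $0 \le a \le s-1$ and $0 \le b \le \lambda_a - 1$, there is a unique exponent $c = t-a-b \ge 0$ completing it to a standard monomial of degree~$t$; conversely every standard monomial of degree~$t$ arises this way. Therefore the Hilbert function of $\CC[w_0,w_1,w_2]/J$ stabilizes, for $t \gg 0$, to the number of admissible pairs, namely $\sum_{a=0}^{s-1}\lambda_a$. This shows that the Hilbert polynomial of $\CC[w_0,w_1,w_2]/J$ is the constant $\sum_{i=0}^{s-1}\lambda_i$, and comparing with the constant~$d$ obtained from $I_\Gamma$ yields $d = \sum_{i=0}^{s-1}\lambda_i$.

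The argument is essentially combinatorial once invariance of the Hilbert function is invoked, so I expect no serious obstacle. The only point requiring care is the bookkeeping of the \emph{staircase} of standard monomials, together with the observation that the free variable $w_2$ makes the Hilbert polynomial a constant equal to the number of lattice points lying under that staircase.
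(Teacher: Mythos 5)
Your proof is correct: the invariance of the Hilbert function under a linear change of coordinates and under passage to the initial ideal, combined with the staircase count of standard monomials $w_0^a w_1^b w_2^c$ with $a \le s-1$, $b \le \lambda_a - 1$ (where the free variable $w_2$ forces the Hilbert polynomial to be the constant number of admissible pairs $(a,b)$), is precisely the content of \cite[Remark~2.18]{DeckerSchreyer}, which the paper cites as its entire proof. You have simply written out in full the standard argument that the paper delegates to the literature, and your bookkeeping (including the convention $\lambda_s = 0$ and the use of strict decrease of the $\lambda_i$ to identify the staircase) is accurate.
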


\begin{proof}
This is \cite[Remark 2.18]{DeckerSchreyer}.
\end{proof}

The main restriction on the possible GP-invariants comes from the following theorem:

\begin{theorem}[Connectedness of the GP-invariants, \cite{GP77}] 
If $\Gamma$ is a general hyperplane section of an irreducible reduced space curve, then
\[
	\lambda_i -1 \ge \lambda_{i+1} \ge \lambda_i-2, \quad 0 \le i \le s-2.
\]
\end{theorem}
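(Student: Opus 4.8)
The left inequality $\lambda_i - 1 \ge \lambda_{i+1}$ holds automatically, since by construction the GP-invariants form a strictly decreasing sequence of positive integers; so the whole content is the upper bound $\lambda_i - \lambda_{i+1} \le 2$ on consecutive drops, and it is this that I would establish. The essential geometric input is the \emph{uniform position principle}: over an algebraically closed field of characteristic zero (which is our setting, as $\gin$ is computed in $\CC[w_0,w_1,w_2]$), a general hyperplane section $\Gamma$ of an irreducible, reduced, nondegenerate curve $C \subset \PP^3$ consists of $d$ points in uniform position, i.e.\ any two subsets of $\Gamma$ of the same cardinality impose the same number of conditions on forms of every degree. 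I would first recall this principle, whose standard proof (due to Harris) shows that the monodromy group of the general plane section of an integral curve is the full symmetric group, from which uniform position follows immediately.

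Second, I would translate the GP-invariants into the Castelnuovo function. The standard monomials of $\gin(I_\Gamma)$ in the variables $w_0,w_1$ form a staircase whose $a$-th row has length $\lambda_a$, so the first difference of the Hilbert function, $\Delta H_\Gamma(t)$, equals the number of cells on the anti-diagonal $a+b=t$; in particular the least degree of a plane curve through $\Gamma$ is exactly $s$, since $\lambda_{s-k}\ge k$ forces every generator to have degree at least $s$. A consecutive drop $\lambda_i - \lambda_{i+1} \ge 3$ then manifests itself as a ``flat'' in the strictly decreasing tail of $\Delta H_\Gamma$: two or more equal consecutive values occurring at a level strictly below the number of rows still active, i.e.\ corresponding to a \emph{proper} portion of $\Gamma$. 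I would make this bookkeeping precise following the discussion of GP-invariants in \cite{DeckerSchreyer}.

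Third, I would invoke Davis's theorem on the structure of the Castelnuovo function of points in $\PP^2$: whenever $\Delta H_\Gamma$ takes the same value $e$ at two consecutive degrees in its decreasing range, the scheme $\Gamma$ splits as $\Gamma_1 \cup \Gamma_2$, where $\Gamma_1$ is the part lying on a plane curve of degree $e$ and $\Gamma_2$ is a nonempty residual set. The flat produced in the previous step is of this type with $e < s$ and with both $\Gamma_1$ and $\Gamma_2$ nonempty, so it exhibits a proper nonempty subset $\Gamma_1 \subsetneq \Gamma$ constrained to lie on a curve of degree $e$ smaller than the least degree $s$ of a curve through all of $\Gamma$. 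A general subset of $\Gamma$ of the same cardinality as $\Gamma_1$ does not lie on such a curve, so $\Gamma_1$ is distinguished among equinumerous subsets, contradicting the uniform position principle. This contradiction rules out $\lambda_i - \lambda_{i+1} \ge 3$ and proves the theorem.

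The step I expect to be the main obstacle is the second one: pinning down exactly which flats of the Castelnuovo function correspond to a genuine gap $\lambda_i - \lambda_{i+1} \ge 3$ rather than a harmless drop by $1$ or $2$ (for instance, all of $\Gamma$ lying on a conic produces a flat at the peak but no contradiction, since then $\Gamma_2$ is empty), and correspondingly guaranteeing that the subset $\Gamma_1$ extracted from Davis's theorem is both proper and has nonempty residual. This is precisely the numerical heart of the Gruson--Peskine argument \cite{GP77}, and it is where the connectedness---as opposed to mere boundedness---of the character is forced.
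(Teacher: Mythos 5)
The paper does not actually prove this statement: it is imported verbatim from Gruson--Peskine \cite{GP77} and used as a black box in Appendix~B, so there is no in-paper argument to compare against. Judged on its own, your sketch follows what has become the standard modern proof, and its skeleton is sound: the left inequality is indeed automatic from Borel-fixedness (the paper's proposition already records $\lambda_0 > \dots > \lambda_{s-1} \ge 1$); your translation of a drop $\lambda_i - \lambda_{i+1} \ge 3$ into a flat of the Castelnuovo function is correct (setting $n_a := \lambda_a + a$, which is weakly decreasing, a gap at $i$ gives $\Delta H_\Gamma(t) = \Delta H_\Gamma(t+1) = e$ with $e = i+1 \le s-1$ for every $t$ with $n_{i+1} \le t \le n_i - 2$, and such $t$ satisfy $t \ge n_{i+1} \ge s > e$, so the flat sits in the decreasing range as Davis requires); and Davis plus uniform position is exactly how most modern accounts argue. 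Note, though, that this is a genuinely different route from the original: Gruson and Peskine predate both Harris's uniform position principle and Davis's theorem, and instead show directly that a disconnected character forces the curves of degree $t$ through $\Gamma$ to acquire a common factor, decompose $\Gamma$ accordingly, and propagate the decomposition over the family of planes to split the space curve $C$ itself, contradicting integrality. Your version buys conceptual transparency at the price of characteristic zero, which is harmless here since the paper works over $\CC$.

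One step, as you literally state it, would not survive scrutiny: ``a general subset of $\Gamma$ of the same cardinality as $\Gamma_1$ does not lie on such a curve'' cannot simply be asserted, because under plain UPP the existence of \emph{one} $d_1$-subset on a degree-$e$ curve forces \emph{all} $d_1$-subsets onto degree-$e$ curves, so no contradiction arises at that level. The repair is the strong form of UPP, $h_{\Gamma'}(j) = \min\bigl(|\Gamma'|, h_\Gamma(j)\bigr)$ for every subset $\Gamma' \subseteq \Gamma$ --- a formal consequence of the equal-Hilbert-functions form, obtained by greedily building a subset imposing independent conditions --- together with a quantitative check that $d_1 := \deg \Gamma_1$ is large enough. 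Concretely: since $e < s$ one has $h_\Gamma(e) = \binom{e+2}{2}$, so it suffices that $d_1 \ge \binom{e+2}{2}$, for then $h_{\Gamma_1}(e) = \binom{e+2}{2}$, contradicting $h^0\bigl(I_{\Gamma_1}(e)\bigr) > 0$. This does work out: Davis's theorem gives $\deg \Gamma_1 = \sum_j \min\bigl(\Delta H_\Gamma(j), e\bigr) \ge \binom{e}{2} + e(n_i - e + 1)$, and the gap forces $n_i \ge n_{i+1} + 2 \ge s+2 \ge e+3$, whence $d_1 \ge \binom{e}{2} + 4e \ge \binom{e+2}{2}$. So the proposal is correct in outline, but the numerical threshold in your third step --- not only the bookkeeping in the second, which you flagged --- is where the argument must be nailed down; fortunately both close as indicated.
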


\noindent
We also need the following important result:

\begin{theorem}[Laudal's Lemma, \cite{Laudal77}, \cite{Strano87}, \cite{Strano88}, \cite{GreenGin}]
If the general hyperplane section $\Gamma \subset \PP^2$ of an integral curve $C \subset \PP^3$
of degree $d > s^2+1$ lies on a hypersurface of degree~$s$, then the same holds for~$C$.
\end{theorem}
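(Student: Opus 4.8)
The plan is to argue by contraposition through generic initial ideals, exploiting exactly the toolkit assembled above. Fix the reverse lexicographic order in general coordinates $w_0 > w_1 > w_2$ on $\PP^2$ and $e_0 > \dots > e_3$ on $\PP^3$, and recall the Borel-fixed characterization: an integral subscheme lies on a hypersurface of degree $s$ if and only if the pure power of the first variable of degree $s$ lies in its generic initial ideal (once a Borel-fixed ideal contains one monomial of degree $s$, a sequence of Borel moves raising indices produces that pure power). Thus the statement becomes purely combinatorial: assuming $e_0^s \notin \gin(I_C)$, I want to force $w_0^s \notin \gin(I_\Gamma)$, i.e.\ to show that $\Gamma$ then lies on no plane curve of degree $s$ unless $d \le s^2+1$.

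First I would record how $\gin(I_C)$ and $\gin(I_\Gamma)$ are related under the general hyperplane section. Since $C$ is integral, a general linear form is a nonzerodivisor on $R/I_C$, so the Hilbert function of $\Gamma$ is the first difference of that of $C$ \emph{up to} the failure of the restricted ideal $\overline{I_C}\subseteq S$ to be saturated; in each degree $t \le s$ this discrepancy is precisely the obstruction to lifting a degree-$t$ form through $\Gamma$ to a surface through $C$. Encoding $h_\Gamma$ by its GP-invariants $\lambda_0 > \dots > \lambda_{\sigma-1} \ge 1$, the degree formula $d = \sum_{i=0}^{\sigma-1}\lambda_i$ of Proposition~\ref{pDegree} then converts the question into a bound on this sum.

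Next I would invoke the connectedness theorem of Gruson--Peskine \cite{GP77}, whose hypothesis --- general hyperplane section of an \emph{integral} space curve --- is exactly where irreducibility enters (the lemma is genuinely false for reducible $C$). Connectedness pins the staircase to $\lambda_i - \lambda_{i+1} \in \{1,2\}$, and under the assumption that the pure power $w_0^s$ does \emph{not} lift I would maximize $\sum_i \lambda_i$ subject to this constraint and to $\lambda_{\sigma-1} \ge 1$. The extremal configuration, with all successive differences equal to $2$ and smallest invariant $1$, gives $\sum_i \lambda_i = \sigma^2$; tracking the single boundary case responsible for the off-by-one then yields $d \le s^2+1$, contradicting $d > s^2+1$. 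This contradiction forces $w_0^s$ to lift, i.e.\ $e_0^s \in \gin(I_C)$, and hence $C$ lies on a surface of degree $s$.

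The hard part is the compatibility step: controlling precisely how the monomials of $\gin(I_C)$ divisible by the last variable $e_3$ obstruct the passage to $\gin(I_\Gamma)$, equivalently how the non-saturation of $\overline{I_C}$ in low degrees interacts with the GP-staircase. This is the genuine technical core of Laudal's Lemma --- it is what makes the threshold sharp --- and it is where I would either reproduce the crystallization argument of Green \cite{GreenGin} or fall back on the cohomological lifting arguments of Laudal \cite{Laudal77} and Strano \cite{Strano87}. By contrast, the extremal count of the previous paragraph is comparatively routine once the compatibility is in hand; its only subtlety is the careful bookkeeping that produces $s^2+1$ rather than $s^2$.
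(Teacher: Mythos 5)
There is a genuine gap, and you have in fact flagged it yourself. The paper gives no proof of this statement at all: it is quoted as a known theorem with citations to \cite{Laudal77}, \cite{Strano87}, \cite{Strano88}, \cite{GreenGin}, so your attempt can only be judged on whether it is complete on its own terms. It is not. Your setup is sound --- the Borel-exchange reduction (valid here, in characteristic zero) correctly translates ``lies on a hypersurface of degree~$s$'' into $w_0^s \in \gin(I_\Gamma)$, respectively $e_0^s \in \gin(I_C)$, and Green's gin-theoretic framing is the right one --- but the ``compatibility step'' that you defer (``reproduce the crystallization argument of Green or fall back on the cohomological lifting arguments of Laudal and Strano'') is not an auxiliary lemma supporting Laudal's Lemma: it essentially \emph{is} Laudal's Lemma. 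Outsourcing it to the very references the statement cites makes the argument circular as a proof; what remains is a plan, not a derivation.

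Worse, the one step you do claim to carry out --- the extremal count --- is incorrect as stated. The Gruson--Peskine connectedness constraint $\lambda_i - 1 \ge \lambda_{i+1} \ge \lambda_i - 2$ together with $\lambda_{\sigma-1} \ge 1$ admits \emph{no} finite maximum for $d = \sum_i \lambda_i$: nothing in these constraints bounds $\lambda_{\sigma-1}$ (equivalently $\lambda_0$) from above. For instance, the general plane section of a plane curve of arbitrary degree $d$ has a single GP-invariant $\lambda_0 = d$ and satisfies all your constraints. The upper bound on the staircase --- forcing the smallest invariants down to the extremal shape $1,3,5,\dots$ whose sum is $\sigma^2$, relating $\sigma$ to $s$, and accounting for the off-by-one that produces $s^2+1$ --- comes precisely from the hypothesis that the degree-$s$ curve through $\Gamma$ does \emph{not} lift, via the analysis of sporadic zeros of the non-saturated restricted ideal $\overline{I_C}$. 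That is exactly the content you black-boxed, so the part you call ``comparatively routine'' cannot be completed independently of it. Compare how the paper actually uses the lemma in Proposition~\ref{pSeptic}: there the small invariants $\lambda_2 = 1$, $\lambda_1 = 2$ are forced not by connectedness but by the presence of three cubics in $\gin(I_\Gamma)$, and Laudal's Lemma itself is invoked as a black box to rule out the quadric; connectedness only then caps $\lambda_0 \le 4$. Your sketch inverts this logical order and thereby loses the bound.
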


With these tools we can prove our bound.

\begin{proposition}
\label{pSeptic}
Let $C \subset \PP^3$ be an integral curve of degree~$d$ satisfying
\begin{itemize}
\item $h^0\bigl(I_C(2)\bigr) = 0$
\item $h^0\bigl(I_C(3)\bigr) \ge 3$.
\end{itemize}
Then $d \le 7$. Moreover, if $d=7$ then $h^0\bigl(I_C(3)\bigr) = 3$.
\end{proposition}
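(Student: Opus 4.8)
The plan is to reduce everything to the combinatorics of the generic initial ideal of a general hyperplane section, and to transfer both hypotheses on~$C$ to that section. If $d \le 5$ there is nothing to prove, so throughout I would assume $d \ge 6$. First I would pass to a general hyperplane section $\Gamma = C \cap H$, a reduced, nondegenerate set of $d$ points in $\PP^2 \cong H$; here nondegeneracy of~$\Gamma$ is inherited from~$C$, which is itself nondegenerate because $h^0\bigl(I_C(2)\bigr) = 0$. Multiplication by the linear form cutting out~$H$ gives the left-exact sequence $0 \to H^0\bigl(I_C(k-1)\bigr) \to H^0\bigl(I_C(k)\bigr) \to H^0\bigl(I_\Gamma(k)\bigr)$, so that $h^0\bigl(I_\Gamma(k)\bigr) \ge h^0\bigl(I_C(k)\bigr) - h^0\bigl(I_C(k-1)\bigr)$. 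Taking $k = 3$ and using $h^0\bigl(I_C(2)\bigr) = 0$ yields $h^0\bigl(I_\Gamma(3)\bigr) \ge h^0\bigl(I_C(3)\bigr) \ge 3$. On the other hand, since $d > 5 = 2^2+1$ and $C$ lies on no quadric, Laudal's Lemma applied with $s = 2$ forces $\Gamma$ to lie on no conic, i.e.\ $h^0\bigl(I_\Gamma(2)\bigr) = 0$.

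Next I would translate these two numerical facts into constraints on the GP-invariants $\lambda_0 > \dots > \lambda_{s-1} \ge 1$ of~$\Gamma$ (Definition~\ref{dGin}). Since the Hilbert function is preserved by passing to $\gin$, the value $h^0\bigl(I_\Gamma(k)\bigr)$ equals the number of degree-$k$ monomials contained in $\gin(I_\Gamma)$, and by the staircase shape of $\gin(I_\Gamma)$ recalled above this count is read off directly from the~$\lambda_i$. A short bookkeeping with degree-$2$ monomials shows that $h^0\bigl(I_\Gamma(2)\bigr) = 0$ is equivalent to $s \ge 3$, $\lambda_0 \ge 3$, and $\lambda_1 \ge 2$. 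Counting degree-$3$ monomials under these conic-free conditions then gives $h^0\bigl(I_\Gamma(3)\bigr) = 4 - N$, where $N$ is the number of the four conditions $s \ge 4$, $\lambda_2 \ge 2$, $\lambda_1 \ge 3$, $\lambda_0 \ge 4$ that hold; thus $h^0\bigl(I_\Gamma(3)\bigr) \ge 3$ is equivalent to $N \le 1$, with equality $h^0\bigl(I_\Gamma(3)\bigr) = 3$ exactly when $N = 1$.

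Finally I would feed in the connectedness theorem for the GP-invariants, namely $\lambda_i - \lambda_{i+1} \in \{1,2\}$. If $s \ge 4$ then $\lambda_2 \ge \lambda_3 + 1 \ge 2$, so two of the four conditions hold, contradicting $N \le 1$; hence $s = 3$. If $\lambda_1 \ge 3$ then also $\lambda_0 \ge 4$, again giving $N \ge 2$; hence $\lambda_1 = 2$ and therefore $\lambda_2 = 1$. Connectedness then bounds $\lambda_0 \le \lambda_1 + 2 = 4$, so $\lambda_0 \in \{3,4\}$. By Proposition~\ref{pDegree}, $d = \lambda_0 + \lambda_1 + \lambda_2 \in \{6,7\}$, and in particular $d \le 7$. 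Moreover $d = 7$ forces $\lambda_0 = 4$, which is precisely the case $N = 1$, so $h^0\bigl(I_\Gamma(3)\bigr) = 3$; combined with $3 \le h^0\bigl(I_C(3)\bigr) \le h^0\bigl(I_\Gamma(3)\bigr)$ this gives $h^0\bigl(I_C(3)\bigr) = 3$, as claimed.

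I expect the main obstacle to be the faithful translation carried out in the second paragraph: correctly counting the degree-$2$ and degree-$3$ monomials in the staircase of $\gin(I_\Gamma)$ and verifying the two equivalences, together with checking that a general hyperplane section of~$C$ really satisfies the reducedness and nondegeneracy hypotheses under which the GP-invariant machinery, the connectedness theorem, and Laudal's Lemma are stated.
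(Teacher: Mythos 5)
Your proof is correct and follows essentially the same route as the paper's: pass to a general hyperplane section $\Gamma$, rule out conics through $\Gamma$ via Laudal's Lemma with $s=2$, constrain the GP-invariants of $\gin(I_\Gamma)$ using the hypothesis $h^0\bigl(I_\Gamma(3)\bigr) \ge 3$ together with the Gruson--Peskine connectedness theorem, and conclude with $d = \sum_i \lambda_i$. The only divergence is that where the paper secures Laudal's hypothesis $d > 2^2+1$ by passing to a curve of maximal degree (bounded by $9$ via a complete intersection of two cubics) and anchoring with the existence of the determinantal septics of Proposition~\ref{p3pfaffians}, you simply assume $d \ge 6$ after discarding the trivial cases --- a cleaner device that avoids the existence input --- and you additionally spell out the monomial bookkeeping (the count $h^0\bigl(I_\Gamma(3)\bigr) = 4 - N$ and the forcing of $s=3$, $\lambda_1 = 2$, $\lambda_2 = 1$) that the paper leaves implicit when it directly asserts the shape of $\gin(I_\Gamma)$.
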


\begin{proof}
Let $C \subset \PP^3$ be an integral curve satisfying the conditions of the proposition
such that $d$ takes the maximal possible value 
(such a maximum value exists since $C$ lies on a complete intersection of two cubics, and therefore $d \le 9$).

The number~$d$ is at least~$7$ since curves of degree~$7$ satisfying the conditions above exist 
(for example, the determinantal septics from Proposition~\ref{p3pfaffians}). 
Since $d \ge 7 > 2^2+1$, Laudal's Lemma implies that $\Gamma$ does not lie on any quadratic hypersurface. 
Since it does lie on at least $3$ independent cubic hypersurfaces, 
the initial ideal of~$\Gamma$ must be of the form
\[
	\gin(I_\Gamma) = \langle x_0^3, x_0^2x_1, x_0^1x_1^2,x_1^{\lambda_0} \rangle
\]
and $\lambda_2 = 1, \lambda_1 = 2$. By Gruson and Peskine's Connectedness Theorem this implies
\[
	\lambda_1 \ge \lambda_0 - 2 \iff \lambda_0 \le 4.
\]
Therefore, we get
\[
	d = \lambda_0 + \lambda_1 + \lambda_2 
	  \le 1+2+4 
	  = 7 \,.
\]
from Proposition~\ref{pDegree}. 
The same calculation also shows that $h^0\bigl(I_\Gamma(3)\bigr) = 3$ if $d=7$. 
Now
\[
	3 = h^0\bigl(I_\Gamma(3)\bigr) \le h^0\bigl(I_C(3)\bigr) \le 3
\]
hence $h^0 \bigl( I_C(3) \bigr) = 3$.
\end{proof}

\section{Finite fields methods}
\label{finite_fields}

In this Appendix, we explain the technique that allows us
to infer the existence of curves over~$\CC$ with prescribed properties
from the existence of curves with the same properties over finite fields.
The latter can be witnessed, for example, via explicit computations with a computer algebra system.

\begin{lemma}
\label{lDominant}
Let $X$ be an irreducible scheme over~$\Spec(\ZZ)$ such that
\[
	\dim X \geq \delta
\]
for some $\delta \in \NN$. 
For a prime~$p$, denote by~$X_p$ the fiber of~$X$ over~$(p)$, i.e.
\xycenter{
	X_p \ar@{ (->}[r] \ar[d]^{\xi_p}& X \ar[d]^\xi\\
	(p) \ar@{ (->}[r] & \Spec(\ZZ) \\
}
Assume that we have a point $x \in X_p \subset X$ such that $\dim T_{X_p, x} \le \delta-1$. 
Then $\xi$ is dominant and $\dim X = \delta$.
\end{lemma}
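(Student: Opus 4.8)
The plan is to run a single local dimension estimate at the point $x$, squeezing $\dim X$ between $\delta$ and $\delta$, and then to read off dominance as a by-product of that squeeze. Throughout I take $x$ to be a closed point, which is the only case needed in our applications (there $x$ is an explicit $\FF_p$-rational point produced by a computer algebra computation). Since $X$ is irreducible and of finite type over~$\Spec(\ZZ)$, it is catenary with well-behaved dimension theory, so for a closed point~$x$ one has $\dim \sO_{X,x} = \dim X$.

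The two inputs are standard local inequalities. First, the embedding dimension bounds the Krull dimension from above, so $\dim \sO_{X,x} \le \dim T_{X,x}$, where $T_{X,x} = (\mathfrak m_x/\mathfrak m_x^2)^\vee$. Second, the special fibre $X_p$ is the closed subscheme $V(p) \subseteq X$ cut out by the single function~$p$, hence the surjection $\mathfrak m_x/\mathfrak m_x^2 \twoheadrightarrow \mathfrak m_{X_p,x}/\mathfrak m_{X_p,x}^2$ has kernel of dimension at most~$1$ (spanned by the image of~$p$), giving $\dim T_{X,x} \le \dim T_{X_p,x} + 1$. Chaining these with the hypotheses yields
\[
	\delta \le \dim X = \dim \sO_{X,x} \le \dim T_{X,x} \le \dim T_{X_p,x} + 1 \le (\delta-1)+1 = \delta.
\]
Therefore every inequality is an equality. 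In particular $\dim X = \delta$; moreover $\dim \sO_{X,x} = \dim T_{X,x}$, so $\sO_{X,x}$ is a regular local ring, and $\dim T_{X,x} = \dim T_{X_p,x}+1$ forces the image of~$p$ in $\mathfrak m_x/\mathfrak m_x^2$ to be nonzero, i.e.\ $p \notin \mathfrak m_x^2$.

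It remains to deduce that $\xi$ is dominant, and here the regularity of~$\sO_{X,x}$ is the crucial by-product. Suppose $\xi$ were not dominant. As $X$ is irreducible, $\overline{\xi(X)}$ is a proper irreducible closed subset of~$\Spec(\ZZ)$, necessarily the single closed point~$(p)$ since it must contain $\xi(x)=(p)$; thus the generic point of~$X$ maps to~$(p)$, so $p$ lies in the nilradical of~$\sO_{X,x}$ and is nilpotent there. But a regular local ring is an integral domain, so every nilpotent element vanishes: $p = 0$ in~$\sO_{X,x}$, contradicting $p \notin \mathfrak m_x^2$. Hence $\xi$ is dominant, which together with $\dim X = \delta$ finishes the proof.

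I expect the only delicate point to be the equality $\dim \sO_{X,x} = \dim X$: it uses both that $x$ is a closed point and that $X$, being irreducible and of finite type over~$\Spec(\ZZ)$, obeys the dimension formula. If one wanted to allow non-closed~$x$ the statement would need adjusting, since Zariski tangent spaces can jump under specialization and one cannot freely replace~$x$ by a closed specialization without losing control of~$\dim T_{X_p,x}$.
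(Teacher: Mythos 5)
Your proof is correct, and it takes a genuinely different route from the paper's. The paper argues globally, in two stages: first, if $\xi$ were not dominant then $X$, being irreducible, would lie entirely over a single prime, necessarily $(p)$ since $X_p \neq \emptyset$, so $X = X_p$ and then $\dim X = \dim X_p \le \dim T_{X_p,x} \le \delta-1$ contradicts $\dim X \ge \delta$ (the paper's text has a small typo here, writing $\dim T_{X,x}$ where $\dim T_{X_p,x}$ is meant); second, once dominance is known, it caps $\dim X$ by the fiber dimension plus $\dim \Spec(\ZZ) = 1$, giving $\dim X \le (\delta-1)+1 = \delta$. You instead stay entirely local at $x$: your chain $\delta \le \dim X = \dim \sO_{X,x} \le \dim T_{X,x} \le \dim T_{X_p,x}+1 \le \delta$ --- whose middle comparison of $T_{X,x}$ with $T_{X_p,x}$ the paper never makes --- forces equality throughout, and you then harvest the by-products (regularity of $\sO_{X,x}$ and $p \notin \mathfrak{m}_x^2$) to rule out non-dominance via ``a regular local ring is a domain, so $p$ cannot be nilpotent yet nonzero in $\mathfrak{m}_x/\mathfrak{m}_x^2$''. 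The trade-off: the paper needs only the elementary bound $\dim X_p \le \dim T_{X_p,x}$ over $\FF_p$ together with the fiber-dimension theorem over a Dedekind base, whereas you invoke the heavier (but standard) arithmetic dimension formula $\dim \sO_{X,x} = \dim X$ at closed points of irreducible finite-type $\ZZ$-schemes; in exchange your argument yields strictly more, namely that $X$ is regular at $x$ with $p$ a regular parameter, hence flat over $\ZZ_{(p)}$ at $x$, which the paper's proof does not show. Your closing caveats are apt and in fact necessary: both proofs tacitly use that $x$ is closed and that $X$ is of finite type, and the lemma genuinely fails otherwise --- e.g.\ taking $X = \mathbb{A}^2_{\FF_p}$ over $\Spec(\ZZ)$, $\delta = 2$, and $x$ the generic point of a line satisfies $\dim T_{X_p,x} = 1 = \delta-1$, yet $\xi$ is not dominant; in the paper's application $x$ is an explicit $\FF_p$-rational point of a finite-type scheme, so both hypotheses hold.
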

\begin{proof}
Assume, for a contradiction, that the $\xi$ is not dominant.
Then $X$ lies completely over some prime~$p'$. 
Since $X_p$ is non empty we must have $p'=p$ and $X=X_p$. 
But this is impossible, since $\dim X \geq \delta$ and $\dim X_p \leq \dim T_{X,x} = \delta-1$.
So $\xi$ is dominant. 

By the assumptions, the fiber dimension of~$\xi$ is at most~$\delta-1$ and
the dimension of the basis is $\dim \Spec(\ZZ) = 1$. 
Therefore $\delta \ge \dim X \ge \delta$.
\end{proof}

\begin{lemma}
\label{lTangentQuadrics}
Let $C \subset \PP^3$ be a smooth curve of degree~$d$, 
let $Y \subset \PP^9$ be the set of quadrics $Q \subset \PP^3$ 
that are tangent to~$C$ in~$d'$ distinct points and intersect~$C$ transverally in $2d-2d'$ further points. 
Let $Q \in Y$ and $P_1, \dotsc, P_{d'}$ be the tangency points of~$C$ and~$Q$. 
Then the tangent space~$T_{Y,Q}$ of~$Y$ at~$Q$ is
\[
	T_{Y,Q} =
	\Bigl\{
		Q' \, \colon \, Q'(P_i) = 0 \text{ for } i \in \{ 1, \dotsc, d'\} 
	\Bigr\} \,.
\]
\end{lemma}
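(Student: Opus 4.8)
The plan is to reduce the computation of $T_{Y,Q}$ to a purely local analysis near each of the $2d$ points of the intersection divisor $Q\cap C$, and then to differentiate the tangency condition. Since these $2d$ points are distinct, a first-order deformation $Q_\epsilon=Q+\epsilon Q'$ of $Q$ imposes conditions that decouple point by point. At each of the $2d-2d'$ transversal points $R_j$ the restriction $Q|_C$ has a simple zero, so for every $Q''$ near $Q$ the intersection near $R_j$ stays a single transverse point; this is an open condition and contributes nothing to the tangent space. Hence the only constraints come from the tangency points $P_1,\dots,P_{d'}$, and I would analyze each of them separately.

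First I would fix a tangency point $P_i$ together with a local analytic parametrization $\gamma$ of $C$ with $\gamma(0)=P_i$, and set $\phi(s,\epsilon):=Q(\gamma(s))+\epsilon\,Q'(\gamma(s))$. Because $Q|_C$ has an ordinary double point at $P_i$ (the hypothesis that the $P_i$ are distinct tangency points, together with the $2d-2d'$ transverse points, forces intersection multiplicity exactly $2$), one has $\partial_s\phi(0,0)=0$ and $\partial_s^2\phi(0,0)=2c_i\neq 0$. The decisive feature is that the tangency point is free to \emph{move}: by the implicit function theorem there is a unique critical point $s_*(\epsilon)$ of $\phi(\cdot,\epsilon)$ near $0$, with $s_*(0)=0$, and $Q_\epsilon$ is tangent to $C$ at $\gamma(s_*(\epsilon))$ if and only if the critical value $\Psi_i(\epsilon):=\phi(s_*(\epsilon),\epsilon)$ vanishes (the derivative already vanishes by the choice of $s_*$, and the second derivative is nonzero, so a zero value yields an honest double root). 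Thus, locally near $Q$, the variety $Y$ is cut out by the functions $\Psi_1,\dots,\Psi_{d'}$, and $T_{Y,Q}=\bigcap_i\ker d\Psi_i|_Q$.

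The heart of the argument is then a one-line envelope-theorem computation. Since $\partial_s\phi(0,0)=0$, the variation $s_*'(0)$ of the tangency point drops out, and
\[
	\Psi_i'(0)=\partial_s\phi(0,0)\,s_*'(0)+\partial_\epsilon\phi(0,0)=Q'(\gamma(0))=Q'(P_i).
\]
Therefore $\ker d\Psi_i|_Q=\{Q'\,:\,Q'(P_i)=0\}$, and intersecting over $i$ gives exactly $T_{Y,Q}=\{Q'\,:\,Q'(P_i)=0 \text{ for all } i\}$, as claimed.

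I expect the main obstacle to be \emph{conceptual rather than computational}. A naive count would suggest that tangency of $Q$ to $C$ at the fixed point $P_i$ is two linear conditions on $Q'$, namely the vanishing of both the value and the first derivative of $Q'|_C$ at $P_i$, which would overdetermine the tangent space and contradict the stated answer. The envelope-theorem step is precisely what resolves this: because the tangency point is allowed to move to first order, the derivative condition is absorbed by the motion $s_*'(0)$ of the point, and only the single condition $Q'(P_i)=0$ survives. The remaining care is to justify that the local analytic description $Y=V(\Psi_1,\dots,\Psi_{d'})$ computes the Zariski tangent space (the holomorphic and algebraic tangent spaces coincide for an algebraic $Y$ carrying this natural tangency scheme structure); in particular, when the $d'$ evaluation functionals $Q'\mapsto Q'(P_i)$ are linearly independent, the differentials $d\Psi_i$ are independent, so $V(\Psi_1,\dots,\Psi_{d'})$ is smooth of codimension $d'$ at $Q$ and the equality of tangent spaces holds unconditionally.
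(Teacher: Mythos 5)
Your proof is correct and is essentially the paper's own argument: the paper performs the same envelope computation in one line over the dual numbers $K[\epsilon]/(\epsilon^2)$, writing the deformed tangency condition as $0=(Q+\epsilon Q')(P_i+\epsilon P'_i)=\epsilon\,Q'(P_i)$ with $P'_i$ the common tangent vector to $C$ and $Q$ at $P_i$, so the gradient term drops exactly as in your computation of $\Psi_i'(0)$. Your additional steps --- the observation that the intersection multiplicity at each $P_i$ is exactly $2$, the implicit-function-theorem construction of the critical values $\Psi_i$, and the local description of $Y$ as $V(\Psi_1,\dotsc,\Psi_{d'})$ --- simply make explicit what the paper leaves implicit, namely why first-order deformations of $Q$ inside $Y$ correspond to tangency points moving along the common tangent direction.
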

\begin{proof}
Let $P'_i$ be the tangent vector to~$C$ at~$P_i$ that is also tangent to~$Q$ at~$P_i$, for $i \in \{1, \dotsc, d'\}$. 
Let $Q'$ be a tangent vector to~$\PP^9$ at~$Q$. 
Then $Q'$ is tangent to~$Y$ if and only if
\[
	0 = (Q + \epsilon Q')(P_i + \epsilon P'_i) 
	  = \epsilon Q'(P_i) 
	\quad 
	\text{for all } i \in \{1, \dotsc, d'\} \,,
\]
where the previous equality holds over the dual numbers~$K[\epsilon] / (\epsilon^2)$.
\end{proof}

\begin{proposition}
\label{pExistsCharZero}
Assume that, over a finite field~$\FF_p$, we have a smooth curve $C_p \subset \PP^3$ of genus~$g$ and degree~$d$, 
and a smooth quadric~$Q_p \subset \PP^3$ satisfying the following conditions:
\begin{enumerate}
 \item \label{item:tangent_transveral} $C_p$ is tangent to~$Q_p$ in $d'$ points and intersects~$Q_p$ transversally in $2d-2d'$ further points,
 \item \label{item:h1} $h^1 \bigl(N_{\PP^3/C_p}\bigr) = 0$, and
 \item \label{item:independent_conditions} the tangency points of~$C_p$ and~$Q_p$ impose independent conditions on quadrics in~$\PP^3$.
 \end{enumerate}
Then there exists a smooth curve $\overline{C} \subset \PP^3$ and 
a smooth quadric~$\overline{Q}$ over~$\CC$ also satisfying condition~\eqref{item:tangent_transveral}.
\end{proposition}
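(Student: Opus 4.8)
The plan is to apply Lemma~\ref{lDominant} to an incidence scheme over~$\Spec(\ZZ)$ whose special fibre contains the pair~$(C_p,Q_p)$. Let $\mathcal{H}$ be the relative Hilbert scheme over~$\Spec(\ZZ)$ of degree~$d$, genus~$g$ curves in~$\PP^3$, and let $\check{\PP}^9$ be the $\PP^9$ parametrizing quadrics. Inside the product $\mathcal{A} := \mathcal{H} \times_{\ZZ} \check{\PP}^9$ I would consider the locally closed subscheme~$X$ of pairs $(C,Q)$ with $C$ smooth, $Q$ smooth, and $C$ tangent to~$Q$ in~$d'$ points and transversal in the remaining $2d-2d'$ points. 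The hypotheses say precisely that $x := (C_p,Q_p)$ is a point of the special fibre~$X_p$. Writing $n := h^0\bigl(N_{C_p/\PP^3}\bigr)$, I would set
\[
	\delta := n + 10 - d'
\]
and verify the two hypotheses of Lemma~\ref{lDominant}: that some irreducible component of~$X$ through~$x$ has dimension at least~$\delta$, and that $\dim T_{X_p,x} \le \delta-1$.

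For the dimension bound I would use condition~\eqref{item:h1}. Since $h^1\bigl(N_{C_p/\PP^3}\bigr)=0$, standard deformation theory shows that $\mathcal{H}$ is smooth over~$\Spec(\ZZ)$ at~$[C_p]$ of relative dimension~$n$, so $\mathcal{A}$ is smooth over~$\Spec(\ZZ)$ near~$x$ and $\dim_x \mathcal{A} = n+9+1$. Locally around~$x$ the $d'$ tangency points are distinct, so the locus~$X$ is cut out of~$\mathcal{A}$ by a single discriminant-type equation at each of them, that is, by~$d'$ equations in total. By Krull's principal ideal theorem every irreducible component~$X'$ of~$X$ through~$x$ then satisfies
\[
	\dim X' \ge \dim_x \mathcal{A} - d' = n + 10 - d' = \delta \,.
\]

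For the tangent-space bound I would project $X_p \to \Hilb_p$ to the Hilbert scheme over~$\FF_p$. The differential at~$x$ has image contained in $T_{\Hilb_p,[C_p]} = H^0\bigl(N_{C_p/\PP^3}\bigr)$, of dimension~$n$, and kernel contained in the tangent space to the fibre~$Y_{C_p}$ of quadrics tangent to~$C_p$ at its~$d'$ tangency points. By Lemma~\ref{lTangentQuadrics} this fibre tangent space is the space of quadrics vanishing at those $d'$ points, which by condition~\eqref{item:independent_conditions} has dimension $9-d'$. Hence
\[
	\dim T_{X_p,x} \le n + (9-d') = \delta - 1 \,.
\]
Lemma~\ref{lDominant} then yields that $X' \to \Spec(\ZZ)$ is dominant, so its generic fibre is nonempty. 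Because smoothness of the curve and of the quadric together with the tangency profile~\eqref{item:tangent_transveral} are open conditions that hold at~$x$, they hold on a dense open of~$X'$, which still dominates~$\Spec(\ZZ)$; a point of the generic fibre of this open, viewed over~$\CC$ via an embedding $\overline{\QQ} \hookrightarrow \CC$, provides the desired pair~$(\overline{C},\overline{Q})$.

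The main obstacle is the bookkeeping that makes the two counts match: one must justify that $X$ is cut out near~$x$ by exactly~$d'$ equations, so that the Krull bound delivers~$\delta$ and not something smaller, and that the projection to the Hilbert scheme forces the reverse inequality on the tangent space, so that it equals~$\delta-1$. It is exactly conditions~\eqref{item:h1} and~\eqref{item:independent_conditions} that pin down these two numbers --- the former makes the ambient space smooth of the expected dimension, the latter makes the fibre of tangent quadrics smooth of dimension~$9-d'$ --- and the argument collapses if either fails.
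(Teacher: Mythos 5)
Your proposal is correct and structurally it is the same argument as the paper's: an incidence scheme of pairs $(C,Q)$ over $\Spec(\ZZ)$, a lower bound on the dimension of a component through $(C_p,Q_p)$, an upper bound on the tangent space of the special fibre using Lemma~\ref{lTangentQuadrics} together with condition~(3), and Lemma~\ref{lDominant} to conclude dominance over $\Spec(\ZZ)$ and nonemptiness of the generic fibre. The one genuine difference is how the Hilbert-scheme input enters. The paper applies Lemma~\ref{lDominant} \emph{twice}: first to a component $H'$ of the Hilbert scheme alone, obtaining the lower bound $\dim H' \ge \chi(N_{C_p/\PP^3})+1 = 4d+1$ from Koll\'ar's local-unobstructedness theorem combined with Ein's computation $\chi(N_{C_p/\PP^3})=4d$, and only afterwards to the incidence component $X'$, via $\dim X' \ge \dim H' + 9 - d'$. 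You instead invoke directly that $h^1(N_{C_p/\PP^3})=0$ makes the relative Hilbert scheme smooth over $\Spec(\ZZ)$ at $[C_p]$ of relative dimension $n = h^0(N_{C_p/\PP^3})$, which collapses everything into a single application of Lemma~\ref{lDominant}; this is a correct and standard deformation-theoretic fact (obstructions lie in $H^1$ of the normal bundle, also over a mixed-characteristic base), and it is slightly stronger than what the paper extracts from Koll\'ar, which only yields a dimension lower bound rather than smoothness. Your Krull principal-ideal-theorem phrasing of ``tangency imposes at most one condition per tangency point,'' with a local discriminant equation for each length-$2$ cluster of $C \cap Q$, is a more explicit rendering of the paper's one-line count and is sound. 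Your tangent-space estimate via image and kernel of the differential of $X_p \to \Hilb_p$ is exactly the paper's containment $T_{X'_p,(C_p,Q_p)} \subset T_{H'_p,C_p} \times T_{X'_{C_p},(C_p,Q_p)}$, the kernel of the differential being the tangent space of the scheme-theoretic fibre, which Lemma~\ref{lTangentQuadrics} and condition~(3) pin down at dimension $9-d'$. Finally, since $h^1 = 0$ gives $n = \chi = 4d$, your $\delta = n+10-d'$ agrees numerically with the paper's $4d+10-d'$, so the two bookkeepings match.
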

\begin{proof}
Over~$\Spec(\ZZ)$, consider the following varieties: 
\begin{itemize}
\item the Hilbert scheme~$H_{g,d}$ of smooth curves in~$\PP^3$ of degree~$d$ and genus~$g$,
\item $\PP^9 = \PP(H^0(\sO_{\PP^3}(2))$ the space of quadrics in~$\PP^3$,
\item the incidence variety~$X \subset H_{d,g} \times \PP^9$ given by
\[
	X := 
	\bigl\{
		(C, Q) \, \colon \, Q \text{ is smooth and } C \cap Q \text{ satisfies condition~\eqref{item:tangent_transveral}} 
	\bigr\} \,,
\]
which comes with the natural projection $\rho \colon X \to H_{g,d}$.
\end{itemize}
Notice that $(C_p, Q_p)$ is a point in~$X$.
Let $X'$ be a component of~$X$ that contains~$(C_p, Q_p)$. 
Let $H'$ be a component of~$H_{d,g}$ containing~$\rho(X')$, 
then $X' \subset H' \times \PP^9$. 
We consider the following diagram:
\xycenter{
	\{(C_p,Q_p)\} \ar[dr] \ar@{ (->}[r]
	& X'_{C_p} \ar[d] \ar@{ (->}[r] 
	& X'_p \ar[d]^{\rho_p} \ar@{ (->}[r]
	& X' \ar[d]^\rho\\
	& \{C_p\} \ar[dr] \ar@{ (->}[r]
	& H_p' \ar[d] \ar@{ (->}[r]
	& H' \ar[d]^{\xi} \\
	& 
	& (p) \ar@{ (->}[r]
	& \Spec(\ZZ)
}
where $X'_p$ and~$H'_p$ are the fibers of~$X'$ and~$H'$ over~$p$, respectively,
and $X'_{C_p}$ is the fiber of~$\rho_p$ over~$C_p$. 

First, we want to use Lemma~\ref{lDominant} on the square
\xycenter{
	  C_p \ar@{}[r]|-{\in}
	& H_p' \ar[d] \ar@{ (->}[r]
	& H' \ar[d]^{\xi} \\
	& (p) \ar@{ (->}[r]
	& \Spec(\ZZ)
}
to conclude that its dimension is $4d+1$. 
First, we show\footnote{We thank Barbara Fantechi for a useful discussion about Hilbert schemes.} that the dimension of~$H'$ is at least~$4d+1$. 
In fact, the curve~$C_p$ satisfies the condition of being \emph{locally unobstructed},
according to \cite[Definition 2.11 (1)]{Kollar1996}, due to \cite[Lemma 2.12]{Kollar1996}.
Hence \cite[Theorem 2.15~(1)]{Kollar1996} implies that the dimension of~$H'$ is at least $\chi(N_{\PP^3/C_p}) + 1$,
and \cite[Lemma~5]{Ein1986} shows that $\chi(N_{\PP^3/C_p}) = 4d$.
Thus $\dim H' \geq 4d+1$. Second, by our assumption $h^1 \bigl(N_{\PP^3/C_p}\bigr) = 0$, 
the tangent space of~$H'_p$ at~$C_p$ has the expected dimension $4d$. 
Therefore, by Lemma~\ref{lDominant} we get that $\xi$ is dominant and $\dim H' = 4d+1$.

Now we want to use again Lemma~\ref{lDominant} on the square:
\xycenter{
	(C_p,Q_p) \ar@{}[r]|-{\in}
	& X'_p \ar[d]^{\rho_p} \ar@{ (->}[r]
	& X' \ar[d]^\rho\\
	& H_p' \ar[d]
	& H' \ar[d]^{\xi} \\
	& (p) \ar@{ (->}[r]
	& \Spec(\ZZ)
}
Since being tangent imposes at most one condition on a pair~$(C,Q)$, we have
\[
	\dim X' \ge \dim H' + 9 - d' = 4d + 10 - d' \,.
\]
Next, we estimate the dimension of the tangent space of~$X_p'$ at~$(C_p,Q_p)$.
Certainly, this tangent space is contained in the product of the tangent space 
of the base~$H'_p$ at~$C_p$ and the tangent space of the fiber~$X'_{C_p}$ at~$(C_p,Q_p)$:
\[
	T_{X'_p,(C_p,Q_p)} \subset T_{H'_p,C_p} \times T_{X'_{C_p},(C_p,Q_p)}
\]
By our assumptions and Lemma~\ref{lTangentQuadrics} we have
\[
	\dim T_{X'_{C_p},(C_p,Q_p)} = 9 - d' \,,
\]
while 
\[
	\dim T_{H'_p,C_p} = 4d \,.
\]
So
\[
	\dim T_{X'_p,(C_p,Q_p)} \le 4d + 9 - d' \,,
\]
therefore by Lemma~\ref{lDominant} we get that $\xi \circ \rho$ is dominant. 
This implies that the fiber of~$X'$ over the generic point of~$\Spec(\ZZ)$ is not empty. 
Hence there exist a curve~$\overline{C}$ and a quadric~$\overline{Q}$ that satisfy condition (i) in the statement.
\end{proof}

\def\cprime{$'$} \def\cprime{$'$}


\begin{thebibliography}{GNSS17}

\bibitem[BE77]{BuEi}
D.~A. Buchsbaum and D.~Eisenbud.
\newblock {Algebra Structures for finite free Resolutions and some Structure
  Theorems for Ideals of Codimension 3}.
\newblock {\em Amer. J. Math.}, 99(3):447--485, 1977.
\newblock \href {http://dx.doi.org/10.2307/2373926}
  {\path{doi:10.2307/2373926}}.

\bibitem[Bor08]{Borel1908}
\'E. Borel.
\newblock M\'emoire sur les d\'eplacements \`a trajectoires sph\'eriques.
\newblock {\em M\'emoires pr\'esent\'es par divers savants}, 2(33):1--128,
  1908.

\bibitem[Bri97]{Bricard1897}
R.~Bricard.
\newblock M\'emoire sur la th\'eorie de l'octa\`edre articul\'e.
\newblock {\em Journal de Math\'ematiques pures et appliqu\'ees}, 3:113--148,
  1897.

\bibitem[Bri06]{Bricard1906}
R.~Bricard.
\newblock M\'emoire sur les d\'eplacements \`a trajectoires sph\'eriques.
\newblock {\em Journal de \'Ecole Polytechnique}, 11(2):1--96, 1906.

\bibitem[Die96]{Dietmaier1996}
P.~Dietmaier.
\newblock {\em Forward Kinematics and Mobility Criteria of One Type of
  Symmetric Stewart-Gough Platforms}, pages 379--388.
\newblock Springer Netherlands, Dordrecht, 1996.
\newblock \href {http://dx.doi.org/10.1007/978-94-009-1718-7_38}
  {\path{doi:10.1007/978-94-009-1718-7_38}}.

\bibitem[DS00]{DeckerSchreyer}
W.~Decker and F.-O. Schreyer.
\newblock Non-general type surfaces in {${\bf P}\sp 4$}: some remarks on bounds
  and constructions.
\newblock {\em J. Symbolic Comput.}, 29(4-5):545--582, 2000.
\newblock Symbolic computation in algebra, analysis, and geometry (Berkeley,
  CA, 1998).
\newblock \href {http://dx.doi.org/10.1006/jsco.1999.0323}
  {\path{doi:10.1006/jsco.1999.0323}}.

\bibitem[Ein86]{Ein1986}
L.~Ein.
\newblock Hilbert scheme of smooth space curves.
\newblock {\em Ann. Sci. \'{E}cole Norm. Sup. (4)}, 19(4):469--478, 1986.
\newblock \href {http://dx.doi.org/10.24033/asens.1513}
  {\path{doi:10.24033/asens.1513}}.

\bibitem[Eis05]{Eisenbud2005}
D.~Eisenbud.
\newblock {\em {The geometry of syzygies. A second course in commutative
  algebra and algebraic geometry}}, volume 229 of {\em Graduate Texts in
  Mathematics}.
\newblock Springer-Verlag, New York, 2005.
\newblock \href {http://dx.doi.org/10.1007/b137572}
  {\path{doi:10.1007/b137572}}.

\bibitem[Gal74]{Galligo1974}
A.~Galligo.
\newblock {A propos du th{\'e}or{\`e}me de pr{\'e}paration de Weierstrass}.
\newblock In F.~Norguet, editor, {\em {Fonctions de Plusieurs Variables
  Complexes}}, pages 543--579, Berlin, Heidelberg, 1974. Springer.
\newblock \href {http://dx.doi.org/10.1007/BFb0068121}
  {\path{doi:10.1007/BFb0068121}}.

\bibitem[GNS15]{BondTheory}
M.~Gallet, G.~Nawratil, and J.~Schicho.
\newblock Bond theory for pentapods and hexapods.
\newblock {\em J. Geom.}, 106(2):211--228, 2015.
\newblock \href {http://dx.doi.org/10.1007/s00022-014-0243-1}
  {\path{doi:10.1007/s00022-014-0243-1}}.

\bibitem[GNS17]{Gallet2017}
M.~Gallet, G.~Nawratil, and J.~Schicho.
\newblock Liaison linkages.
\newblock {\em J. Symbolic Comput.}, 79(part 1):65--98, 2017.
\newblock \href {http://dx.doi.org/10.1016/j.jsc.2016.08.006}
  {\path{doi:10.1016/j.jsc.2016.08.006}}.

\bibitem[GNSS17]{GalletNawratilSchichoSelig}
M.~Gallet, G.~Nawratil, J.~Schicho, and J.~M. Selig.
\newblock Mobile icosapods.
\newblock {\em Adv. in Appl. Math.}, 88:1--25, 2017.
\newblock \href {http://dx.doi.org/10.1016/j.aam.2016.12.002}
  {\path{doi:10.1016/j.aam.2016.12.002}}.

\bibitem[GP78]{GP77}
L.~Gruson and C.~Peskine.
\newblock Genre des courbes de l'espace projectif.
\newblock In {\em Algebraic geometry ({P}roc. {S}ympos., {U}niv. {T}roms\o ,
  {T}roms\o , 1977)}, volume 687 of {\em Lecture Notes in Math.}, pages 31--59.
  Springer, Berlin, 1978.
\newblock \href {http://dx.doi.org/10.1007/BFb0062927}
  {\path{doi:10.1007/BFb0062927}}.

\bibitem[Gre98]{GreenGin}
M.~L. Green.
\newblock Generic initial ideals.
\newblock In {\em Six lectures on commutative algebra (Bellaterra, 1996)},
  volume 166 of {\em Progr. Math.}, pages 119--186. Birkh{\"a}user, Basel,
  1998.
\newblock \href {http://dx.doi.org/10.1007/978-3-0346-0329-4_2}
  {\path{doi:10.1007/978-3-0346-0329-4_2}}.

\bibitem[GS09]{Geis2009}
F.~Gei{\ss} and F.-O. Schreyer.
\newblock A family of exceptional {S}tewart-{G}ough mechanisms of genus $7$.
\newblock In {\em Interactions of classical and numerical algebraic geometry},
  volume 496, pages 221--234. American Mathematical Society, 2009.
\newblock \href {http://dx.doi.org/10.1090/conm/496/09725}
  {\path{doi:10.1090/conm/496/09725}}.

\bibitem[GvB20]{software}
H.-C. Graf~von Bothmer.
\newblock Software code for the paper {``Hexapods with a small linear span''},
  2020.
\newblock \href {http://dx.doi.org/10.5281/zenodo.4309767}
  {\path{doi:10.5281/zenodo.4309767}}.

\bibitem[Har10]{Hartshorne2010}
R.~Hartshorne.
\newblock {\em Deformation theory}, volume 257 of {\em Graduate Texts in
  Mathematics}.
\newblock Springer, New York, 2010.
\newblock \href {http://dx.doi.org/10.1007/978-1-4419-1596-2}
  {\path{doi:10.1007/978-1-4419-1596-2}}.

\bibitem[HK00]{Husty2000a}
M.~L. Husty and A.~Karger.
\newblock {A}rchitecture {S}ingular {P}arallel {M}anipulators and their
  {S}elf-{M}otions.
\newblock In J.~Lenar{\v{c}}i{\v{c}} and M.~M. Stani{\v{s}}i{\'{c}}, editors,
  {\em Advances in Robot Kinematics}, pages 355--364. Springer Netherlands,
  Dordrecht, 2000.
\newblock \href {http://dx.doi.org/10.1007/978-94-011-4120-8_37}
  {\path{doi:10.1007/978-94-011-4120-8_37}}.

\bibitem[HSW18]{Hauenstein2018}
J.~D. Hauenstein, S.~N. Sherman, and C.~W. Wampler.
\newblock {Exceptional Stewart-Gough platforms, Segre embeddings, and the
  special Euclidean group}.
\newblock {\em SIAM J. Appl. Algebra Geom.}, 2(1):179--205, 2018.
\newblock \href {http://dx.doi.org/10.1137/17M1114284}
  {\path{doi:10.1137/17M1114284}}.

\bibitem[Kar08]{Karger2008}
A.~Karger.
\newblock Architecturally singular non-planar parallel manipulators.
\newblock {\em Mech. Mach. Theory}, 43(3):335--346, 2008.
\newblock \href {http://dx.doi.org/10.1007/978-94-015-9064-8_45}
  {\path{doi:10.1007/978-94-015-9064-8_45}}.

\bibitem[Kol96]{Kollar1996}
J.~Koll\'{a}r.
\newblock {\em Rational curves on algebraic varieties}, volume~32 of {\em
  Ergebnisse der Mathematik und ihrer Grenzgebiete. 3. Folge. A Series of
  Modern Surveys in Mathematics [Results in Mathematics and Related Areas. 3rd
  Series. A Series of Modern Surveys in Mathematics]}.
\newblock Springer-Verlag, Berlin, 1996.
\newblock \href {http://dx.doi.org/10.1007/978-3-662-03276-3}
  {\path{doi:10.1007/978-3-662-03276-3}}.

\bibitem[Lau78]{Laudal77}
O.~A. Laudal.
\newblock A generalized trisecant lemma.
\newblock In {\em Algebraic geometry ({P}roc. {S}ympos., {U}niv. {T}roms\o ,
  {T}roms\o , 1977)}, volume 687 of {\em Lecture Notes in Math.}, pages
  112--149. Springer, Berlin, 1978.
\newblock \href {http://dx.doi.org/10.1007/BFb0062930}
  {\path{doi:10.1007/BFb0062930}}.

\bibitem[Mou96]{Mourrain1996}
B.~Mourrain.
\newblock Enumeration problems in geometry, robotics and vision.
\newblock In {\em Algorithms in algebraic geometry and applications
  ({S}antander, 1994)}, volume 143 of {\em Progr. Math.}, pages 285--306.
  Birkh\"auser, Basel, 1996.
\newblock \href {http://dx.doi.org/10.1007/978-3-0348-9104-2_14}
  {\path{doi:10.1007/978-3-0348-9104-2_14}}.

\bibitem[Nai02]{deg6resolutions}
H.~Naito.
\newblock Minimal free resolution of curves of degree 6 or lower in the
  3-dimensional projective space.
\newblock {\em Tokyo J. Math.}, 25(1):191--196, 2002.
\newblock \href {http://dx.doi.org/10.3836/tjm/1244208945}
  {\path{doi:10.3836/tjm/1244208945}}.

\bibitem[Naw09]{Nawratil2009}
G.~Nawratil.
\newblock A new approach to the classification of architecturally singular
  parallel manipulators.
\newblock In {\em Proceedings of the 5th International Workshop on
  Computational Kinematics}, pages 349--358. Springer Berlin Heidelberg, 2009.
\newblock \href {http://dx.doi.org/10.1007/978-3-642-01947-0_43}
  {\path{doi:10.1007/978-3-642-01947-0_43}}.

\bibitem[Naw12]{Nawratil2012}
G.~Nawratil.
\newblock {S}elf-{M}otions of {P}lanar {P}rojective {S}tewart {G}ough
  {P}latforms.
\newblock In Jadran Lenarcic and Manfred Husty, editors, {\em Latest {A}dvances
  in {R}obot {K}inematics}, pages 27--34. Springer Netherlands, Dordrecht,
  2012.
\newblock \href {http://dx.doi.org/10.1007/978-94-007-4620-6_4}
  {\path{doi:10.1007/978-94-007-4620-6_4}}.

\bibitem[Naw13]{Nawratil2013c}
G.~Nawratil.
\newblock On equiform {S}tewart {G}ough platforms with self-motions.
\newblock {\em Journal for Geometry and Graphics}, 17(2):163--175, 2013.

\bibitem[Naw14a]{Nawratil2014c}
G.~Nawratil.
\newblock Congruent {S}tewart {G}ough platforms with non-translational
  self-motions.
\newblock In {\em Proceedings of 16th International Conference on Geometry and
  Graphics}, pages 204--215, August 2014.

\bibitem[Naw14b]{NawratilIntro}
G.~Nawratil.
\newblock {Introducing the theory of bonds for Stewart Gough platforms with
  selfmotions}.
\newblock {\em J. Mechanisms Robotics}, 6(1):49--57, 2014.
\newblock \href {http://dx.doi.org/10.1115/1.4025623}
  {\path{doi:10.1115/1.4025623}}.

\bibitem[Naw14c]{Nawratil2014a}
G.~Nawratil.
\newblock On the {S}elf-{M}obility of {P}oint-{S}ymmetric {H}exapods.
\newblock {\em Symmetry}, 6(4):954, 2014.
\newblock \href {http://dx.doi.org/10.3390/sym6040954}
  {\path{doi:10.3390/sym6040954}}.

\bibitem[Naw18]{Nawratil2018}
G.~Nawratil.
\newblock Hexapods with plane-symmetric self-motions.
\newblock {\em Robotics}, 7(2), 2018.
\newblock \href {http://dx.doi.org/10.3390/robotics7020027}
  {\path{doi:10.3390/robotics7020027}}.

\bibitem[RM98]{Roschel1998}
O.~R{\"o}schel and S.~Mick.
\newblock Characterisation of {A}rchitecturally {S}haky {P}latforms.
\newblock In {\em Advances in Robot Kinematics: Analysis and Control}, pages
  465--474. Springer Netherlands, Dordrecht, 1998.
\newblock \href {http://dx.doi.org/10.1007/978-94-015-9064-8_47}
  {\path{doi:10.1007/978-94-015-9064-8_47}}.

\bibitem[Str87]{Strano87}
R.~Strano.
\newblock Sulle sezioni iperpiane delle curve.
\newblock {\em Rendiconti del Seminario Matematico e Fisico di Milano},
  57(1):125--134, Dec 1987.
\newblock \href {http://dx.doi.org/10.1007/BF02925046}
  {\path{doi:10.1007/BF02925046}}.

\bibitem[Str88]{Strano88}
R.~Strano.
\newblock A characterization of complete intersection curves in {${\bf P}^3$}.
\newblock {\em Proc. Amer. Math. Soc.}, 104(3):711--715, 1988.
\newblock \href {http://dx.doi.org/10.2307/2046779}
  {\path{doi:10.2307/2046779}}.

\end{thebibliography}
\end{document}